\documentclass[11pt,a4paper,reqno]{amsart}

\usepackage[T1]{fontenc}
\usepackage{amsmath,amssymb,amsfonts,amsthm,amstext}
\usepackage{mathrsfs}
\usepackage[mathscr]{eucal}
\usepackage{graphicx,epstopdf}
\usepackage{color,xcolor}
\usepackage{array}
\usepackage{enumitem}
\usepackage{setspace}
\usepackage[title]{appendix}
\usepackage{amscd,psfrag}
\usepackage{yhmath}
\usepackage{comment}
\usepackage{slashed}
\usepackage[normalem]{ulem}
\usepackage{indentfirst}
\usepackage{chngcntr}
\usepackage[margin=2.5cm]{geometry}
\usepackage{cite}
\usepackage[colorlinks,citecolor=blue,urlcolor=blue]{hyperref}

\makeatletter
\@namedef{subjclassname@2020}{%
  \textup{2020} Mathematics Subject Classification}
\makeatother

\setstretch{1.2}
\setlength{\parindent}{28pt}
\setlength{\parskip}{2.0pt}
\allowdisplaybreaks[4]
\counterwithin{figure}{section}

\theoremstyle{plain}
\numberwithin{equation}{section}

\newtheorem{definition}{Definition}[section]
\newtheorem{theorem}[definition]{Theorem}
\newtheorem*{theorem*}{Theorem}

\newtheorem{lemma}[definition]{Lemma}
\newtheorem{corollary}[definition]{Corollary}
\newtheorem*{corollary*}{Corollary}
\newtheorem{proposition}[definition]{Proposition}

\newtheorem*{claim*}{Claim}
\newtheorem*{q*}{Question}

\theoremstyle{definition}
\newtheorem{remark}[definition]{Remark}
\newtheorem*{remark*}{Remark}
\newtheorem*{sideremark*}{Side Remark}

\newtheorem{example}[definition]{Example}
\newtheorem{convention}[definition]{Convention}


\newcommand{\blue}[1]{\textcolor{blue}{#1}}

\newcommand{\R}{\mathbb{R}}

\newcommand{\na}{\nabla}

\newcommand{\id}{{\rm Id}}

\newcommand{\p}{\partial}

\newcommand{\C}{\mathbb{C}}
\newcommand{\dd}{{\rm d}}

\newcommand{\two}{{\rm II}} 
\newcommand{\bra}{\left\langle}
\newcommand{\ket}{\right\rangle}

\newcommand{\err}{{\mathscr{E}}}

\newcommand{\mmp}{{\mom p}}
\newcommand{\1}{{\mathbf{1}}}

\newcommand{\ir}{{\mathcal{I}\mathcal{R}}}

\newcommand{\tens}{{T\left(\left(\R^d\right)\right)}}

\newcommand{\mom}{{\frac{\scrm}{m}}}

\newcommand{\fs}{{\mathfrak{S}}}

\newcommand{\sym}{{\bf sym}}

\newcommand{\scrm}{{\mathscr{M}}}
\newcommand{\scrd}{{\mathscr{D}}}
\newcommand{\scrc}{{\mathscr{C}}}
\newcommand{\scrg}{{\mathscr{G}}}
\newcommand{\fraka}{{\mathfrak{A}}}
\newcommand{\frakb}{{\mathfrak{B}}}
\newcommand{\one}{{\rm I}}

\newcommand{\ssig}{\widetilde{{\bf \Sigma}}}

\newcommand{\mm}{{(m)}}

\def\Xint#1{\mathchoice
{\XXint\displaystyle\textstyle{#1}}%
{\XXint\textstyle\scriptstyle{#1}}%
{\XXint\scriptstyle\scriptscriptstyle{#1}}%
{\XXint\scriptscriptstyle\scriptscriptstyle{#1}}%
\!\int}
\def\XXint#1#2#3{{\setbox0=\hbox{$#1{#2#3}{\int}$ }
\vcenter{\hbox{$#2#3$ }}\kern-.6\wd0}}

\def\dashint{\Xint-}


\DeclareMathOperator{\E}{\mathbb{E}}
\DeclareMathOperator{\Var}{Var}

\title{Small mass limit of expected signature for physical Brownian motion}
\thanks{To appear in \emph{Annales de l'Institut Henri Poincar\'e -- Probabilit\'es et Statistiques}.}

\author{Siran Li}
\address{S.~Li: School of Mathematical Sciences and CMA-Shanghai, Shanghai Jiao Tong University, No.~6 Natural Sciences Building,
800 Dongchuan Road, Minhang, Shanghai, China (200240)}
\email{siran.li@sjtu.edu.cn}

\author{Hao Ni}
\address{H.~Ni: Department of Mathematics, University College London, Room 603, 25 Gordon St, London WC1H 0AY, UK; and the Alan Turing Institute, 96 Euston Rd, London NW1 2DB, UK}
\email{h.ni@ucl.ac.uk}

\author{Qianyu Zhu}
\address{Q.~Zhu: Center for Computational Science and Engineering, Massachusetts Institute of Technology, 77 Massachusetts Ave, Cambridge, MA 02139}
\email{qianyu\_z@mit.edu}

\keywords{Brownian motion; physical Brownian motion; rough path; small mass limit; singular limit; expected signature}

\subjclass[2020]{Primary: 60L20; Secondary: 35R45}
\date{March 2026}

\pagestyle{plain}

\begin{document}

\begin{abstract}
Physical Brownian motion describes the dynamics of a Brownian particle experiencing frictional force. It was investigated in the classical work [L.~S.\ Ornstein and G.~E.\ Uhlenbeck, {\em Phys.\ Rev.}\ {\bf 36} (1930)] as a physically meaningful approach to realising the standard ``mathematical'' Brownian motion, via sending the mass $m \to 0^+$ and performing natural scaling. The analysis was extended to a Brownian particle in an external magnetic field in [P.\ Friz, P.\ Gassiat, and T.\ Lyons, {\em Trans.\ Amer.\ Math.\ Soc.}\ {\bf 367} (2015)], discovering the new phenomenon that the area process associated to the
physical process converges --- but not to L\'{e}vy's stochastic area. In this paper, we carry out the singular limit analysis of a generalised stochastic differential equation (SDE) model encompassing the physical Brownian motion as a special case. We show that the expected signature of the solution $\left\{P_t\right\}_{t \geq 0}$ for the generalised SDE converges to a nontrivial tensor as $m \to 0^+$, at each degree in the tensor algebra and on each time interval $[0,T]$, through a delicate convergence analysis based on the graded PDE system for the expected signature of It\^{o} diffusion processes. Moreover, explicit solutions exhibiting intriguing combinatorial patterns are obtained when the coefficient matrix $\scrm$ in our SDE is diagonalisable. In the case of physical Brownian motion, $\left\{P_t\right\}_{t \geq 0}$ corresponds to the momentum of the particle (viewed as a rough path), and $\scrm$ is the stress tensor. Our work appears among the very first endeavours to study the singular limit of expected signature of diffusion processes, especially for nonzero initial datum $p=P_0$.
\end{abstract}

\maketitle

\tableofcontents

\smallskip

\section{Introduction}
This paper is devoted to the singular limit analysis of a general family of  stochastic differential equations (SDE) models, motivated by and encompassing the \emph{physical Brownian motion} that models the behaviour of objects subject to random impulses. The classical model of physical Brownian motion, first studied in the seminal work \cite{ou} by Ornstein--Uhlenbeck (1930), describes the dynamics of a Brownian particle experiencing friction. Apart from its  significance in physics, physical Brownian motion has also brought about considerable mathematical interests: a well-known approach to realise the standard Wiener process is to take its \emph{small mass limit} (\emph{a.k.a.} zero-mass limit or massless limit). One may scale the SDE for physical Brownian motion by the mass $m$ of the particle, and then send $m \to 0^+$, to obtain the almost sure uniform convergence in distribution to the ``mathematical'' Brownian motion.

\subsection{Physical Brownian motion} 
Let us discuss the physical Brownian motion, starting from dimension three and then generalising to arbitrary dimensions. The SDE studied in this work, Eq.~\eqref{main SDE, equation for P}, is more general than the classical physical Brownian motion. The set up of the physical Brownian motion is taken from Friz--Gassiat--Lyons \cite{friz2015physical}, which extends that in \cite{ou} by additionally including an external magnetic force.

Consider a particle in $\R^3$ with mass $m$, electric charge $q$, and position $x = x(t)$. Assume that it is subject to white noise $\xi$ in time (where $\xi=\xi(t)$ is the derivative of Brownian motion $\mathcal{B}$), and that it moves in a magnetic field $H$ subject to friction. Hence, the particle experiences friction $F_{\rm friction} = -A\dot{x}$ ($A \in {\rm Sym}^{3 \times 3}_+$ is a symmetric, strictly positive $3 \times 3$ matrix) and the Lorentz force $
F_{\rm Lorentz} = q\dot{x} \wedge H$. Since the cross product in dimension 3 can be represented by left-multiplication of anti-symmetric matrices, we may write $F_{\rm Lorentz} = qB\dot{x}$ for some $B \in \mathfrak{so}(3;\R)$.\footnote{All of our notations for matrix groups/algebras are standard; in particular, we write $\mathfrak{gl}(d;\R)$ for the space of $d \times d$ real matrices and similarly for $\mathfrak{gl}(d;\C)$.} Thus, by Newton's second law, $m\ddot x = -\scrm\dot x+\xi$ with $\scrm := A-qB$.

In this work, we focus on a generalised model in arbitrary dimension $d$:
\begin{align}\label{physical BM, x eqn}
&m\ddot x = -\scrm \dot{x} + \xi\quad \text{where $m>0$ and the matrix } \scrm \in \mathfrak{gl}(d;\R)\nonumber \\
&\qquad\qquad\qquad\qquad\text{ has eigenvalues with strictly positive real parts}.
\end{align}
\emph{Formally}, setting $m=0$ reduces the above equation to $\scrm \dot{x} = \xi = \dot{\mathcal{B}}$. For time-independent $\scrm$ it reduces to $\scrm x = \mathcal{B}$, which describes the standard ``mathematical'' Brownian motion. In addition, in terms of the momentum $P(t) = m\dot x(t),$ Eq.~\eqref{physical BM, x eqn} is equivalent to 
\begin{equation}\label{p eqn}
\dot{P} = -\scrm \dot{x} +\xi = -\frac{\scrm}{m}P + \dot{\mathcal{B}}.
\end{equation}
This paper is primarily concerned with the small mass limit (\emph{i.e.}, $m \to 0^+$) for Eq.~\eqref{physical BM, x eqn} or \eqref{p eqn}.

\subsection{From physical to mathematical Brownian motions: non-convergence of signature}
In the pioneering work~\cite{friz2015physical}, Friz--Gassiat--Lyons ascertained that the convergence of the physical Brownian motion to the standard Wiener process as $m \to 0^+$ is \emph{``less robust than it would appear''} --- Although the convergence holds almost surely in the uniform sense in distributions, generic nonlinear functionals $\mathcal{F}$  on the trajectories of the physical Brownian motion do not necessarily converge. That is, $\scrm x \to \mathcal{B}$ but $\mathcal{F}[\scrm x] \nrightarrow \mathcal{F}[\mathcal{B}].$

The primary example of the functional $\mathcal{F}$ analysed in \cite{friz2015physical} is the \emph{level-2 signature}. Stated with respect to the momentum $P_{[s,t]}$ for $0<s<t<\infty$ (see \cite[Proposition~1]{friz2015physical}), one considers 
\begin{align}\label{dp otimes dp}
\mathcal{F}\left[P_{[s,t]}\right] := \int_s^t\int_s^r \dd P_{r'} \otimes \dd P_r.
\end{align}
This is motivated by viewing the momentum as a \emph{control}, or equivalently, viewing the Brownian particle as a \emph{rough path}. It is proved in \cite[Theorem~1]{friz2015physical} that the quantity $\scrm x$ of the physical Brownian motion converges in a nontrivial way: Its area process 
deviates from that of the standard Brownian motion (\emph{i.e.}, L\'{e}vy's stochastic area). The relevant correction term has been identified with the small mass limit of the quantity in Eq.~\eqref{dp otimes dp}. See Friz--Hairer~\cite{FH} for a survey.

\subsection{Expected signature}

We continue the aforementioned investigations by studying the small mass limit of the \emph{expected signature} of momentum (as a rough path) at each level $n$. A key novelty of our work is that the initial momentum $p := P\big|_{t=0}$ is allowed to be arbitrary in $\R^d$.

 The signature of a path is a core object in rough path theory. It is a group homomorphism from the path space into the tensor algebra. The level-$n$ signature may be regarded as the non-commutative analogue of the degree-$n$ monomial in the polynomial ring. The \emph{expected signature}, \emph{i.e.}, the expectation of the signature,  is of considerable theoretical and practical significance. It serves as the moment generating function of a path-valued random variable when the path is random. The expected signature of Gaussian processes (Boehihardjo--Papavasiliou--Qian \cite{boedihardjo2013expected}; Cass--Ferrucci 
 \cite{cass2024wiener}), L\'evy processes (Friz--Shekhar 
 \cite{friz2017general}), and Schramm-Loewner Evolutions (Boehihardjo--Ni--Qian \cite{boedihardjo2014uniqueness}), etc., has been studied extensively. 
 
 In terms of theoretical aspects, the expected signature determines the law of the stochastic process if it has an infinite radius of convergence. See Chevyrev--Lyons \cite{chevyrev2016characteristic} and the recent improvement in Li--Lyu--Ni--Tao \cite{llnt}. Examples of such processes include the standard ``mathematical'' Brownian motion and the fractional Brownian motions with Hurst parameter in $\left]\frac{1}{4}, 1\right]$  up to a fixed time (Cass and Ferrucci \cite{cass2024wiener}). 
The expected signature also provides a rich statistical summary of underlying stochastic processes; \emph{e.g.}, the characteristic function of the L\'{e}vy area process of Brownian motion can be recovered from the expected signature (Levin--Wildon \cite{levin2008combinatorial}). 


On the other hand, the study of expected signature has led to advances in numerical methods for PDEs. Computations for the expected signature of Brownian motion give rise to the notion of ``cubature on Wiener space'', now an effective numerical tool for solving parabolic PDEs (Lyons--Victoir \cite{LV_2004}). Recently, applications of expected signature in machine learning, especially generative models for synthetic time series generation, have attracted considerable attention. Chevyrev--Oberhauser \cite{Chevyrev_2022} introduced the ``Sig-MMD metric'' based on the expected signature and demonstrated its usefulness in two-sample hypothesis testing. Independently, \cite{ni2020conditional,ni2021sig} proposed the Sig-$W_1$ metric, which aligns with Sig-MMD but is instead motivated by the Wasserstein-1 distance on the signature space, and established an effective framework for learning generative models using the Sig-$W_1$ metric as a discriminator in time series generation. In comparison to popular Wasserstein generative adversarial networks (WGAN), Sig-WGAN effectively reduces the min-max game of WGAN to a supervised learning problem, thus leading to faster, more stable training and improved empirical performance on time series data.

An important question in both theory and practice is to determine the expected signature of stochastic processes. Lyons--Ni \cite{ni2012expected} proposed a PDE approach to the expected signature: an infinite, recursive PDE system characterising the expected signature $\Phi$ of time-homogeneous diffusion processes has been derived. As a mapping $\Phi:[0,T] \times \R^d\to\tens$, the tensor algebra, and for each $n \in \mathbb{N}_{\geq 2}$ its $n^{\text{th}}$-level truncation $\Phi_n$ satisfies a parabolic PDE system in terms of $\Phi_{n-1}$ and $\Phi_{n-2}$. See \S\ref{sec: prelim} below for details. This PDE approach has been applied to the analysis of the expected signature of Brownian motion up to the first exit time (\emph{cf.} \cite{lyons2015expected, ours, bdmn}). 

\subsection{Main result and key novelty}

Our main objective is to identify and rigorously prove the limit as $m \to 0^+$ of the expected signature of the solution for the SDE:
\begin{equation}\label{main SDE, equation for P}
\dd P_t = -\mom P_t \,\dd t +\dd W_t,\qquad 
P\big|_{t=0} = p.
\end{equation}
The expected signature $\Phi: [0,T] \times \R^d \to \tens$ 
 of $P_t$ is defined as
\begin{equation}\label{exp sig, def}
\Phi^\mm(t,p) :=\E^p\left[S\left(P_{[0, t]}\right)\right]=\E\left[S\left(P_{[0, t]}\right)\big| P_0=p\right],
\end{equation}
where the signature $S\left(P_{[0, t]}\right):=\left(1, S^1(P_{[0,t]}), S^2(P_{[0,t]}), \cdots\right)$ is defined in the Stratonovich sense:
\begin{equation*}
S^n(P_{[0,t]}):=\idotsint\limits_{0\leq t_1<\dots<t_n\leq t}\dd P_{t_1}\otimes\cdots\otimes\dd P_{t_n}.
\end{equation*}
The superscript $^\mm$ in $\Phi^\mm$ signifies the dependence on the small parameter $m>0$.\footnote{
As discussed before, Eq.~\eqref{main SDE, equation for P} is primarily motivated by and encompasses the model of physical Brownian motion, but it represents a more general class of SDE than the original physical model. Throughout we  still refer to Eq.~\eqref{main SDE, equation for P} as the equation for ``physical Brownian motion'', and refer to $\left\{P_t\right\}_{t\geq 0}$ and $m\to 0^+$ as the ``momentum path'' and the ``small mass limit''.}


Friz--Gassiat--Lyons proved  in the pioneering work \cite{friz2015physical} that the strong solution $P_t$ for Eq.~\eqref{main SDE, equation for P}, identified via its canonical lift as a rough path (termed the ``momentum path'' in the sequel), converges non-trivially in the small mass limit. Indeed, the first level truncation of its signature tends to zero, while the second level  on $[0,t]$ converges to $\left(\scrm \scrc - \frac{\id}{2}\right)t   \in  \mathfrak{gl}(d;\R)$ for each $0<t<\infty$, where $\scrc$ is a matrix determined by $\scrm$. It coincides with the covariance matrix of the Gaussian law of the limiting distribution of the scaled variable $P_t \slash \sqrt{m}$. A convergence theorem on the small mass limit of the displacement $X$ (from $\scrm X$ to $\widehat{\mathbb{W}}$) is also proved, where $\widehat{\mathbb{W}}$ is the rough path of the standard Wiener process \emph{with second level perturbed by $\scrm \scrc - \frac{\id}{2}$}.


Our main theorem of this paper is as follows.\footnote{For simplicity, all matrix and tensor norms are taken to be the $\ell^\infty$-norm throughout this paper. See Convention~\ref{convention} for details.} Here and hereafter, we denote the averaged integral of a function $f: I \subset \R\to\R$ as
\begin{align*}
    {\dashint}_I f := \frac{1}{|I|}\int_I f(t)\,\dd t.
\end{align*}


\begin{theorem}[Main Theorem]\label{thm: main, Dec24}
 Let $\Phi_{n}^\mm: \R^d\times\R_+ \to \left(\R^d\right)^{\otimes n}$ be the degree-$n$ expected signature of the momentum path $\left\{P_t\right\}$. Suppose $\|\scrm\| \leq \Lambda$ and that there is  $K>0$ such that for each $\delta>0$,  $\max\left\{\left\|e^{-\scrm^*\delta}\right\|, \left\|e^{-\scrm\delta}\right\|\right\} \leq Ke^{-\lambda\delta}.$ Here $\lambda \leq\Lambda$ and $K$ are finite  constants depending only on $\scrm$. Then for each $p\in\R^d$, $t>0$, and $n = 1,2,3,\ldots$, we have that
\begin{align*}
\Phi_n^\mm(p,t) &= \sum_{k=0}^{\lfloor {n}\slash{2}\rfloor} \left\{\fraka_{n-2k}^\mm(p,t) \otimes \left[ \left(\scrm \cdot\dashint_0^{{t}\slash{m}}\ssig_\sigma \,\dd \sigma -\frac{\id}{2}\right)t\right]^{\otimes k}  \right\}  + {\rm Error}_n^\mm(p,t),
\end{align*}
where ${\rm Error}_n^\mm(p,t)$ is a polynomial  in $p$ of degree no more than $\max\{n-2,0\}$, and 
\begin{align*}
\fraka_n^\mm(p,t) = \idotsint\limits_{0\leq t_1<\dots<t_n\leq t} \,\,\bigotimes_{j=1}^n  \left( -\mom e^{-\mom t_j} p\right)\, \dd t_1\ldots\,\dd t_n;\qquad\ssig_\sigma := \int_0^\sigma e^{-\scrm \varsigma} e^{-\scrm^*\varsigma}\,\dd\varsigma.
  \end{align*}
The error term satisfies
\begin{align*}
\left\|{\rm Error}_n^\mm(p,t)\right\| \leq C\left(K,\Lambda, \lambda^{-1}, t, d, n\right)\, \left(1+|p|^{\max\{n-2,0\}}\right)m.
\end{align*}
Thus, it vanishes  as $m\to 0^+$ for each fixed $n,d \in \mathbb{N}$, $t \in ]0,\infty[$, $p \in \R^d$, and matrix $\scrm$.  

The small mass limit can be explicitly expressed as follows: for each $q \in \mathbb{N}$, denote
\begin{align*}
    \overline{\fraka}_{q}(p) := \idotsint\limits_{0\leq t_1<\dots<t_q < \infty}\left(-\scrm e^{-\scrm t_1}p\right)\otimes\cdots\otimes\left(-\scrm e^{-\scrm t_q}p\right)\,\dd t_1\ldots\,\dd t_q,\quad \scrc := \ssig_\infty = \int_0^\infty e^{-\scrm t}e^{-\scrm^* t}\,\dd t.
\end{align*}
Then
\begin{align*}
\lim_{m \to 0^+} \Phi^\mm_n(t,p) &= \sum_{k=0}^{\lfloor {n}\slash{2}\rfloor} \left\{\overline{\fraka}_{n-2k}(p) \otimes  \left(\scrm \scrc -\frac{\id}{2}\right)^{\otimes k}  \right\}t^k = \sum_{k=0}^{\lfloor {n}\slash{2}\rfloor} \left\{\overline{\fraka}_{n-2k}(p) \otimes  \left(\frac{\scrm \scrc - \scrc\scrm^*}{2}\right)^{\otimes k}  \right\}t^k
\end{align*}
in the $\ell^\infty$-norm of tensors, with the convention $\overline{\fraka}_0^\mm(t,p) \equiv \overline{\fraka}_0 = \frac{1}{n!} \1 = \left(\frac{1}{n!},0,0,\ldots\right).$ In particular, the massless limit at level $n$ is a polynomial of degree $n$ in $p$ and of degree $\lfloor {n}\slash{2}\rfloor$ in $t$. 
\end{theorem}

When $\scrm$ is diagonalisable, an explicit formula for the limit is given in \S\ref{sec: diag}, Example~(Diagonalisable $\scrm$).

\begin{remark}
The expected signature up to level $2$ is studied in \cite{friz2015physical}. The initial condition $p=0$ is essential for the proof therein, which uses the ergodic theorem and hands-on computations via the properties of Gaussian processes. In order to extend the results to arbitrary initial datum $p \in \R^d$ and expected signature at arbitrary degree, we employ distinctively different approaches based on Eq.~\eqref{exp sig, def},  the recursive PDE system proposed in \cite{ni2012expected}. The analysis of Eq.~\eqref{exp sig, def} is considerably challenging, for its coefficients become singular as limit $m\to 0^+$. Furthermore, even for $p =0$, our results cannot be recovered simply by extending the degree-2 result in \cite{friz2015physical}.\footnote{The direct generalisation of \cite{friz2015physical} to arbitrary tensor degree suggests that $\lim_{m\to 0^+}\Phi_{n}^{(m)}(0, t)$ is the signature of a 2-rough path. This, however, differs from our result by a factor of $n!$ for each degree $2n$.} Also note that \cite{BPQ_2013} is not applicable to our case, as $\left\{P_t\right\}_{t\geq 0}$ in Eq.~\eqref{exp sig, def} is not centred in general. 
\end{remark}

\subsection{Organisation} In \S\ref{sec: prelim}, we collect some preliminaries on It\^o diffusion processes and the recursive PDE system satisfied by the expected signature. The small mass limit for the 1D case is completely solved via the explicit formulae in \S\ref{sec: expsig of phys BM}. The most important section is \S\ref{sec: arb dim}, in which the small mass limit for the expected signature of the momentum of physical Brownian motion in \emph{arbitrary dimension} is carefully justified, by delicate analyses on the associated recursive PDE system. Finally, when $\scrm$ is diagonalisable, we obtain explicit expressions for the small mass limit in \S\ref{sec: diag}. The proofs of several technical lemmas are given in Appendix~\ref{sec: appendix, lemma}. Detailed computations for the third level expected signature $\Phi_3^{(m)}$ can be found in the \hyperlink{https://github.com/DeepIntoStreams/EsigPhysicalBM/blob/main/Phi_3_computation.pdf}{online appendix}.

\section{Preliminaries}\label{sec: prelim}
\subsection{It\^{o} diffusion process}


Denote by $W:=\left\{W_t\right\}_{t \geq 0}$ the standard Wiener process in $\R^{d_0}$. Consider an It\^o diffusion process $X$ in $\R^d$ satisfying the SDE:
\begin{equation}\label{SDE}
\begin{split}
\dd X_{t}^{(j)} = \mu^{(j)}(t, X_{t})\,\dd t \,+ \,&\sum_{i = 1}^{d_0} V_{i}^{(j)}(t, X_{t}) \cdot \dd W_{t}^{(i)},\quad X_{0}^{(j)} = x^{(j)},\\
& j \in \{1,2,\ldots,d\},\text{ and } t \in [0,T].
\end{split}
\end{equation}
Here $\mu = \left\{\mu^{(j)}\right\}_{j = 1}^{d}$ and $V=\left\{V_{i}^{(j)}\right\}_{1\leq i\leq d_0, 1\leq j \leq d}$ are measurable functions, called the drift vector and the dispersion matrix, respectively. The integral is  in the It\^o sense. Equivalently,
\begin{eqnarray*}
\dd X_{t} = \mu(t, X_{t})\,\dd t + V(t, X_{t}) \cdot \,\dd W_{t}.
\end{eqnarray*}
In addition, $B:=VV^\top$ is known as the diffusion matrix, whose entries are
\begin{eqnarray*}
b_{j_{1}, j_{2}}(x) = \sum_{i = 1}^{d_0}V^{(j_{1})}_{i}(x)V^{(j_{2})}_{i}(x),\qquad  j_{1}, j_{2} \in \{1, 2, \dots, d\}.
\end{eqnarray*}
If $\mu$ and $V$ are functions of $x$ only, then $X$ is said to be a \emph{time-homogeneous} process.

A continuous stochastic process $X:=(X_t)_{t \in [0, T]}$ defined on the probability space $(\Omega,\mathcal{F},\mathbb{P})$ is said to be a strong solution for Eq.~\eqref{SDE} if the following holds (Pavliotis \cite[Definition 3.2]{Pavliotis}):
\begin{enumerate}
    \item $X$ is \emph{a.s.} continuous and adapted to the filtration generated by $W$; 
    \item $\mu(\bullet, X_\bullet)\in L^1\left([0,T], \R^d\right)$ and $V\left(\bullet,X_\bullet\right)\in L^2\left([0,T],  L^2\left(\mathbb{R}^{d}, \mathbb{R}^{d_0}\right)\right)$;
    \item for every $t \geq 0$, Eq.~\eqref{SDE} holds \emph{a.s.}. 
\end{enumerate}

The following criterion for the existence and uniqueness of the strong solution to Eq.~\eqref{SDE} is classical (\cite[p.64]{Pavliotis}). For \emph{time-homogeneous} processes, the second condition  in Proposition~\ref{propn: SDEUniqueSolution} follows from the first one, which may not be the case for general stochastic processes.
\begin{proposition}\label{propn: SDEUniqueSolution}
The SDE \eqref{SDE} has a unique strong solution $\left\{X_t\right\}_{t\in [0,T]}$ whenever there exists $C>0$  such that for every $x, y \in \R^d$ and $t\in[0,T]$, one has $       |\mu(x)-\mu(y)|+ \|V(x)-V(y)\|_F\leq C|x-y|$, and for all $x \in \R^d$ and $t\in[0,T]$  one has $
|\mu(t,x)|+\|V(t,x)\|_F\leq C(1+|x|)$. \footnote{
Here $\|A\|_F:=\sqrt{{\rm tr}\left(A^\top A\right)}$ is the Hilbert--Schmidt (Frobenius) norm of matrices. It is different from the $\ell^\infty$-norm of tensors $\|\bullet\|$, but is equivalent to $\|\bullet\|$ modulo a dimensional constant. See Convention~\ref{convention} below.}
\end{proposition}

An important special case of It\^{o} diffusion process is the class of OU (Ornstein--Uhlenbeck) processes, whose SDE is as follows:
\begin{equation}\label{OU process, new}
    \dd X_t = -\theta X_t \,\dd t + \sigma \,\dd W_t,
\end{equation}
where $\theta$ and $\sigma$ are real parameters. The physical Brownian motion falls into this category.


\subsection{Expected signature}
We collect some basic notations and results in rough path theory.

\begin{definition}
A formal $\R^d$-tensor series is a sequence $\left\{a_n\in \left(\R^d\right)^{\otimes n}\right\}_{n\in\mathbb{N}}$, written in the sequel as ${\bf a} = (a_0, a_1, \dots)$. Set ${\bf 1} := (1,0,\dots)$ and ${\bf 0} := (0,0,\dots)$.  The space of  formal $\R^d$-tensor series is denoted as $\tens$. The addition and tensor product of $\R^d$-tensor series are as follows: for ${\bf a} = (a_0, a_1, \dots)$ and ${\bf b} = (b_0, b_1, \dots)$, set ${\bf a}+{\bf b} := (a_0+b_0, a_1+b_1, \dots)$ and ${\bf a}\otimes{\bf b} = {\bf c}$, where $c_n = \sum_{i=0}^n a_i b_{n-i}$ for each $n \in \mathbb{N}$. The scalar multiplication is $\lambda {\bf a} := (\lambda a_0, \lambda a_1,...)$ for $\lambda \in \mathbb{R}$.
\end{definition}

The space $\tens$ becomes an associative unital algebra over $\mathbb{R}$ when  equipped with $+$, $\otimes$. An element ${\bf a}\in E$ is invertible iff $a_0\neq 0$. Its inverse is given by ${\bf a}^{-1} = \frac{1}{a_0}\sum_{n\geq 0}\left({\bf 1}-\frac{{\bf a}}{a_0}\right)^n$.

\begin{definition} \label{def, sig}
Let $\left\{X_t\right\}_{t \geq 0}$ be an It\^o diffusion process governed by SDE~\eqref{SDE}. 
The signature \\
$S(X_{[0,t]}):= \left(1, S^1(P_{[0,t]}), S^2(P_{[0,t]}), \cdots\right)\in \tens$ is defined by the Stratonovich iterated integrals for almost every path $X$ and for every $n \geq 1$:
\begin{equation*}
S^n(X_{[0,t]}):=\idotsint\limits_{0\leq t_1<\dots<t_n\leq t}\dd X_{t_1}\otimes\cdots\otimes\dd X_{t_n}.
\end{equation*} 
\end{definition}

\begin{definition}\label{definition:ExpSig}
Let $\left\{X_t\right\}_{t \geq 0}$ be an It\^o diffusion process governed by SDE~\eqref{SDE} with the initial data $X_0 = x$. Suppose that the signature $S(X)$ is well-defined. The expected signature of $X_{[0,t]}$ is
\begin{equation*}
    \Phi(t,x) := \E^x\left[S\left(X_{[0,t]}\right)\right].
\end{equation*}
The level-$n$ truncation of the expected signature for $n\in\mathbb{N}$ is $\Phi_n(t,x)=\pi_n \left[\Phi(t,x)\right]$, where $\pi_n: \tens \to \bigoplus_{i=0}^n \left(\R^d\right)^{\otimes i}$ is the natural projection. (Note that $\Phi_0(t,x) \equiv 1$.)
\end{definition}

As our primary example, the expected signature of the
Brownian motion in arbitrary dimension $d$ up to a fixed time $T$ is known (Fawcett \cite{Fawcett_2003}): Let $\{W_t\}_{t \geq 0}$ be a standard $d$-dimensional Brownian motion starting from 0. Then it holds that
\begin{align*}
    \E\left[S\left(W_{[0,T]}\right)\right] = \exp\left(\frac{T}{2}\sum_{i=1}^d e_i\otimes e_i\right)=\left(1, \; 0, \; \left( \sum_{i = 1}^{d}\frac{T}{2} e_i \otimes e_i\right), \; 0, \; \frac{1}{2!}\left(\sum_{i = 1}^{d}\frac{T}{2} e_i \otimes e_i\right)^{\otimes2},\, \ldots\right).
\end{align*} 

\subsection{PDE  for expected signature and a Feynman--Kac type result}

The expected signature $\Phi$ of a path $X_t$ can be identified with the strong solution for a graded system of parabolic PDE that admits a representation formula~\eqref{Feynman-Kac, rep formula}.

\begin{theorem}\label{thm: PDE}
Let $\left\{X_t\right\}_{t \geq 0}$ be an It\^{o} diffusion process governed by SDE~\eqref{SDE} with expected signature $\Phi: [0,T]\times\R^d\to\tens$ (Definition~\ref{definition:ExpSig}). Assume for each $n \in \mathbb{N}$ that $$\pi_n\Phi\in C^1\left([0,T];C^2 \cap L^\infty \left(\R^d; \bigoplus_{i=0}^n \left(\R^d\right)^{\otimes i}\right)\right).$$  Then $\Phi$ verifies the system of PDE:
\begin{footnotesize}
\begin{eqnarray*}
\left(-\frac{\partial }{\partial t}+ \mathcal{A}\right)\Phi(t,x) +\sum_{j=1}^{d}\left(\sum_{j_{1}=1}^{d}b_{j_{1},j}(x)e_{j_{1}}\right) \otimes \frac{\partial \Phi(t,x)}{\partial x_{j}}+\left(\sum_{j = 1}^{d} \mu^{(j)}(x)e_{j}+\frac{1}{2}\sum_{j_{1}=1}^{d}\sum_{j_{2}=1}^{d}b_{j_{1},j_{2}}(x)e_{j_{1}}\otimes e_{j_{2}}\right) \otimes \Phi(t,x)=0,
\end{eqnarray*}
\end{footnotesize}
subject to the conditions $    \Phi(0, x) = \mathbf{1}$ and $\Phi_{0}(t, x) = 1$ for all $(t, x) \in [0,T] \times \R^d$. Here 
$\mathcal{A}= \sum_{j=1}^{d}\mu^{(j)}(x)\frac{\partial}{\partial x_{j}}+\frac{1}{2}\sum_{j_{1}=1}^{d}\sum_{j_{2}=1}^{d}b_{j_{1},j_{2}}(x)\frac{\partial^{2}}{\partial x_{j_{1}}\p x_{j_{2}}}$ is the infinitesimal generator of $\left\{X_t\right\}_{t \geq 0}$.


Conversely, assume that the above PDE system has a solution $\Phi$ such that $$\pi_n\Phi\in C^1\left([0,T];C^2 \cap L^\infty \left(\R^d; \bigoplus_{i=0}^n \left(\R^d\right)^{\otimes i}\right)\right)$$ for each $n \in \mathbb{N}$. Moreover, $\Phi(t,p)$ and its first derivative satisfy the polynomial growth condition. Then $\Phi$ agrees with the expected signature of the It\^{o} diffusion process Eq.~\eqref{SDE}.

\end{theorem}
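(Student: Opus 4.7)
Both directions of the theorem are consequences of a tensor-valued It\^{o} formula applied to the ``signature process'' $S_s := S(X_{[0,s]})$. By the very definition of Stratonovich iterated integrals, $S_s$ itself satisfies the tensor SDE $\dd S_s = S_s \otimes \circ\, \dd X_s$ with $S_0 = \mathbf{1}$, which, upon conversion to It\^{o} form, reads
\begin{equation*}
\dd S_s \,=\, S_s \otimes \Bigg(\sum_{j}\mu^{(j)}(X_s)\, e_j + \frac{1}{2}\sum_{j_1,j_2} b_{j_1, j_2}(X_s)\, e_{j_1}\otimes e_{j_2}\Bigg)\dd s \,+\, S_s \otimes V(X_s)\,\dd W_s.
\end{equation*}
This identity, in which the Stratonovich correction supplies the second drift term, will be the common engine for both implications.

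For the forward direction, I would combine the multiplicative property (Chen's identity) $S(X_{[0, t+h]}) = S(X_{[0,h]}) \otimes S(X_{[h, t+h]})$ with the Markov property of $\{X_t\}$ to obtain the semigroup-type relation $\Phi(t+h, x) = \E^x \left[\, S_h \otimes \Phi(t, X_h)\,\right]$. Differentiating in $h$ at $h = 0^+$ (justified by the $C^1$-in-$t$, $C^2$-in-$x$ regularity of $\pi_n \Phi$) and applying It\^{o}'s formula to the product $S_s \otimes \Phi(t, X_s)$ produces three drift contributions that I would then collect: (i) $\mathcal{A}\Phi(t, x)$ from the generator of $X$ acting on the second factor $\Phi(t, X_s)$; (ii) the tensor $\Big(\sum_j \mu^{(j)}(x) e_j + \frac{1}{2}\sum_{j_1,j_2} b_{j_1, j_2}(x)\, e_{j_1}\otimes e_{j_2}\Big)\otimes \Phi(t, x)$ from the drift of $\dd S_s$; and (iii) the cross term $\sum_j\Big(\sum_{j_1} b_{j_1,j}(x)\, e_{j_1}\Big)\otimes \p_{x_j}\Phi(t, x)$ from the quadratic covariation $\dd[S, \Phi(t, X_\cdot)]_s$, since both factors are driven by the same Brownian motion $W$. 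Adding these three contributions and equating to $\p_t\Phi$ yields the stated PDE.

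For the converse (Feynman--Kac-type) direction, fix $T > 0$ and define the tensor-valued process
\begin{equation*}
Y_s \,:=\, S(X_{[0,s]}) \otimes \Phi(T - s,\, X_s), \qquad s \in [0, T].
\end{equation*}
Applying the tensor It\^{o} formula to each truncation $\pi_n Y_s$, using the SDE for $S$ recorded above and the PDE satisfied by $\Phi$, a direct algebraic cancellation shows that the drift of $\pi_n Y_s$ vanishes identically, so $\pi_n Y_s$ is a continuous local martingale. The polynomial-growth hypothesis on $\Phi$ and its first derivatives, together with standard moment bounds $\E^x[|X_s|^p] \leq C_p(1 + |x|^p)$ for It\^{o} diffusions with linearly-growing coefficients, and analogous moment bounds on the iterated integrals making up $S^n(X_{[0,s]})$, upgrade $\pi_n Y_s$ to a true $L^1$-martingale on $[0,T]$. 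Then the identity $\E^x[Y_T] = Y_0$ yields $\E^x[S(X_{[0,T]})] = \Phi(T, x)$, since $Y_T = S(X_{[0,T]}) \otimes \mathbf{1} = S(X_{[0,T]})$ and $Y_0 = \mathbf{1} \otimes \Phi(T, x) = \Phi(T, x)$.

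The most delicate part, I expect, is the tensor-level bookkeeping. Because the tensor algebra $\tens$ is non-commutative, the order of factors in every application of It\^{o}'s product rule is essential, and the covariation between the $\tens$-valued process $S_s$ and the $\tens$-valued function $\Phi(T-s, X_s)$ has to be expanded level by level --- at level $n$ it mixes contributions from $\pi_{n-1}\Phi$ and $\pi_{n-2}\Phi$, reflecting the graded nature of the PDE. A related technical issue is that the martingale upgrade in the converse direction must be performed inductively in $n$, with growth constants chosen uniformly so that the $\pi_{n-1}Y$- and $\pi_{n-2}Y$-induction hypotheses propagate into the next level.
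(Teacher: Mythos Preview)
Your proposal is correct and follows the same martingale-based strategy that the paper invokes: the paper does not give its own proof but simply cites Ni~\cite[p.~49]{ni2012expected} for ``a proof based on martingale properties,'' and your argument --- Chen's identity plus the Markov property to obtain $\Phi(t+h,x)=\E^x[S_h\otimes\Phi(t,X_h)]$, then It\^o's formula on $S_s\otimes\Phi(t,X_s)$ for the forward direction, and the Feynman--Kac martingale $Y_s=S(X_{[0,s]})\otimes\Phi(T-s,X_s)$ for the converse --- is exactly that approach spelled out in detail.
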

\begin{proof}
See \cite[p.49]{ni2012expected} for a  proof  based on martingale properties. \end{proof}

Theorem~\ref{thm: PDE} states that the expected signature of an It\^{o} diffusion process, \emph{viewed as a function of the initial datum and the terminal time $t$}, is equivalent to the strong solution --- \emph{i.e.}, $C^1$ in time and $C^2 \cap L^\infty$ in space at each level of truncation --- to a graded (nested or recursive) PDE system; namely, a $\tens$-valued PDE. Notice that each $\Phi_n$ depends on $\Phi_{n-1}$ and $\Phi_{n-2}$, so the PDE system should be solved iteratively. The strong solution is clearly unique if it exists.

We now introduce our central tool, a Feynman--Kac theorem \cite[p.147]{Friedman}, which connects stochastic processes to parabolic PDEs.
\begin{lemma}\label{lem:FeynmanKac}
Let $\left\{X_t\right\}_{t \geq 0}$ be an It\^{o} diffusion process on $\R^d$ described by Eq.~\eqref{SDE}:
\begin{align*}
    \dd X_{t}^{(j)} = \mu^{(j)}(X_{t})\,\dd t + \sum_{i = 1}^{d_0} V_{i}^{(j)}(X_{t}) \cdot \dd W_{t}^{(i)} \qquad \text{for each $j \in \{1,\ldots,d\}$.}
\end{align*}
Denote by $\mathcal{A}$ the infinitesimal generator with initial datum $X_0=x$ and fix $T>0$. (The dependence on $T$ is suppressed in the sequel.) Consider  for $u: [0,T] \times \R^d \to \R$ the Cauchy problem
\begin{equation}\label{cauchy prob}
\begin{cases}
    \partial_tu- \mathcal{A}u =  f \qquad &\text{ in } [0,T] \times \R^d, \\
    u= 0 \qquad &\text{ at } t=0.
    \end{cases}
\end{equation}
Assume the following conditions for $V=\left\{V_i^{(j)}\right\}$, $\mu=\left\{\mu^{(k)}\right\}$, and $f$:
\begin{enumerate} 
\item There exists $c_0=c(V)>0$ such that 
\begin{align*}
\sum_{i,j=1}^d b_{i,j}(x)\xi_i\xi_j \geq c_0|\xi|^2 \quad \text{for each } \xi \in \R^d \text{ and for all } x \in \R^d.
\end{align*}
\item 
$\mu^{(i)}$, $V_{i}^{(j)}$ are bounded and $f$ is continuous on $[0,T] \times \R^d$.
\item
$\mu^{(i)}$ and $V_{i}^{(j)}$ are uniformly Lipschitz in $(t, x)$ in compact subsets of $[0,T] \times \R^d$. That is, for any compact $\mathcal{K} \Subset [0,T] \times \R^d$, there is $C_1 = C(\mathcal{K},d,\mu, V)$ such that for any $i,k\in \{1,\ldots,d\}, \;j\in \{1,\ldots,d_0\}$,
\begin{align*}
\sup_{(t, x),(s, y) \in \mathcal{K};\, (t, x) \neq (s, y) }  {\left|\mu^{(i)}(t, x)-\mu^{(i)}(s, y)\right| + \left|V_{j}^{(k)}(t, x)-V_{j}^{(k)}(s, y)   \right|} \leq  C_1 \left(|x-y| + |t-s|\right).
 \end{align*}    
\item
$V_i^{(j)}$ are H\"{o}lder in $x$ uniformly: there exist $\gamma = \gamma(V) \in \; ]0,1]$, $C_2 = C(d,V,\gamma)$ such that
\begin{align*}
    \sup_{x \neq y \text{ in } \R^d;\, t \in [0,T]} \frac{\left|V_{j}^{(k)}(x)-V_{j}^{(k)}(y)   \right|}{|x-y|^\gamma} \leq  C_2
 \end{align*}
for any $j\in \{1,\ldots,d_0\},\; k\in \{1,\ldots,d\}$. 
\item $f$ is  H\"{o}lder in $x$ uniformly: there exist $\eta =\eta(f) \in ]0,1]$, $C_3=C(d,V,\eta)$ such that
\begin{align*}
\sup_{x \neq y \text{ in } \R^d;\, t \in [0,T]} \frac{\left|f(t,x)-f(t,y)   \right|}{|x-y|^\eta} \leq  C_3.
\end{align*}
\item In addition, $f$ satisfies a power law growth condition: there are positive constants $C_4=C(f,d)$ and $c_5=c(f,d)$ such that 
$|f(x,t)|\leq C_4(1+|x|^{c_5}) \qquad \text{for all } (x,t) \in [0,T] \times \R^d.$
\end{enumerate} 
Under these assumptions, there exists a unique solution $u \in C^1\left([0,T];C^2\left(\R^d\right)\right)$ to the Cauchy problem~\eqref{cauchy prob},  given by the representation formula
\begin{equation}\label{Feynman-Kac, rep formula}
    u(t, x) = \E^x\left[\int_0^tf(t-s, X_s) \,\dd s\right].
\end{equation}
\end{lemma}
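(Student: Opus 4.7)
The plan is to first establish existence and regularity of a classical solution to the Cauchy problem~\eqref{cauchy prob} via standard parabolic PDE theory, and then identify this solution with the stochastic representation~\eqref{Feynman-Kac, rep formula} by applying It\^{o}'s formula to $u(t-s, X_s)$ for fixed terminal time $t$.

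The hypotheses in the statement are tailored to the classical existence theory for linear parabolic equations of second order. Condition~(1) provides uniform ellipticity of the diffusion matrix $B = VV^\top$; conditions~(2)--(4) give boundedness, local Lipschitz continuity, and uniform H\"{o}lder regularity of the coefficients $\mu^{(j)}$ and $b_{j_1,j_2}$; and conditions~(5)--(6) give H\"{o}lder continuity and polynomial growth of the source $f$. Under these hypotheses, the Schauder-type theory for parabolic Cauchy problems on the whole space (Friedman, \emph{Partial Differential Equations of Parabolic Type}, Chapter~1; see also Ladyzhenskaya--Solonnikov--Ural'tseva) yields a unique classical solution $u \in C^1([0,T]; C^2(\R^d))$ satisfying polynomial-in-$x$ growth estimates on $u$, $\partial_t u$, and $\nabla u$ that are uniform in $t \in [0,T]$.

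Once such a $u$ is secured, fix $(t,x) \in [0,T] \times \R^d$ and, for $s \in [0,t]$, consider $Y_s := u(t-s, X_s)$, where $\{X_s\}$ is the strong solution of~\eqref{SDE} starting from $x$ (whose existence is guaranteed by Proposition~\ref{propn: SDEUniqueSolution} via conditions~(2)--(3)). The chain-rule form of It\^{o}'s formula and substitution of the PDE~\eqref{cauchy prob} yield
\begin{align*}
\dd Y_s = \bigl[-\partial_t u + \mathcal{A} u\bigr](t-s, X_s)\,\dd s + \nabla u(t-s, X_s) \cdot V(X_s)\,\dd W_s = -f(t-s, X_s)\,\dd s + \nabla u(t-s, X_s) \cdot V(X_s)\,\dd W_s.
\end{align*}
Integrating from $0$ to $t$, using $u(0,\cdot) \equiv 0$, and taking $\E^x[\,\cdot\,]$ recovers~\eqref{Feynman-Kac, rep formula}, provided the stochastic integral is a true martingale with vanishing expectation. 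The latter follows from boundedness of $V$ together with the polynomial growth of $\nabla u$ and standard moment bounds on $X_s$ available under conditions~(2)--(3).

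The main obstacle will be the first step: invoking existence of a classical $C^{1,2}$ solution with controlled polynomial growth on the entire space $\R^d$ under precisely this constellation of H\"{o}lder and growth hypotheses --- one must track carefully that conditions~(1)--(6) simultaneously yield Schauder-type solvability on $[0,T] \times \R^d$ \emph{and} the gradient growth bound required to close the martingale argument above. Uniqueness of the classical solution (and hence of the representation formula) then follows a posteriori by applying the same It\^{o}'s formula argument to the difference of any two such solutions, which must therefore coincide.
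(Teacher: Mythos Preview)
Your proposal is correct and follows essentially the same approach as the paper: both invoke Friedman's classical parabolic theory for existence and regularity of a $C^{1,2}$ solution with polynomial growth, then apply It\^{o}'s formula to $u(t-s,X_s)$ and take expectations to recover the representation formula. The paper frames the first step as a time-reversal of Friedman's terminal-value-problem result (\cite[p.147]{Friedman}) rather than invoking Schauder theory directly, but the substance is identical.
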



\section{Expected signature of physical Brownian motion via the graded PDE system}\label{sec: expsig of phys BM}


\subsection{PDE for physical Brownian motion}

Let $\Phi = \{\Phi_n\}_{n \in \mathbb{N}}: [0,T] \times \R^d \to \tens$ be the expected signature of $\{P_t\}$, the momentum of the Brownian particle in magnetic field governed by the SPDE (recall Eq.~\eqref{p eqn}): $
\dd P_t = -\mom P_t \,\dd t +\dd W_t$ with $P\big|_{t=0} = p$. \footnote{From now on, write $(t, P)$ for a generic point in the domain $[0,T]\times\R^d$ of $\Phi$. The symbol $p \in \R^d$ is reserved for the initial momentum.}

In view of Theorem~\ref{thm: PDE}, $\Phi$ satisfies the graded PDE system
\begin{align}\label{PDE for Phi, later}
    &\left(-\p_t + \mathcal{A}\right)\Phi_n(p,t)   = \mmp \otimes \Phi_{n-1}(p,t) -\sum_{j=1}^d e_j \otimes \p_{p_j}\Phi_{n-1}(p,t) \nonumber\\
    &\qquad\qquad - \frac{1}{2}\sum_{j=1}^d e_j \otimes e_j \otimes \Phi_{n-2}(p,t) =: - \fs_n(p,t)
\end{align}
for each $n \in \mathbb{N}$ with $\Phi_0 = {\bf 1} = (1,0,0,\ldots)$. The infinitesimal generator on the left-hand side is $$
    \mathcal{A} = \frac{1}{2}\Delta - \mmp\cdot\na,$$ 
where  $\na$ and $\Delta$  are  taken with respect to $p$. In addition, the diffusion matrix $B$ is the identity matrix, and the drift vector $\mu=-\mmp$.

\begin{remark}
One crucial observation is that, as we are working with the PDE for the expected signature (rather than the signature), all the parameters $\mu, \mathcal{A}, B$ of the It\^o diffusion process depend only on the initial datum $p$. Thus,  here and hereafter, we always view $\Phi$ as a function of $(t,p)$.
\end{remark}

The Feynman--Kac-type formula in Lemma~\ref{lem:FeynmanKac} allows us to express 
\begin{align}\label{feynmann-kac, later}
\Phi_n(t,p) \equiv \Phi_n^\mm(t,p) = \E^p\left[ \int_0^t \mathfrak{S}_n\left(t-s, P_s \right)\,\dd s \right].
\end{align}
This serves as the beginning point of our analysis on the small mass limit of expected signature of the momentum in the subsequent parts of the paper.

\subsection{The case of one space dimension}

We embark on the analysis on the small mass limit of $\Phi_n^\mm(t,p)$ from the simplest case $d=1$. In this case, the expected signature $\Phi_n$ is a real number for each $n=0,1,2,\ldots$, which significantly simplifies the analysis. 

The expected signature in 1D for any OU process can be explicitly computed:

\begin{lemma}\label{lemma:SolutionOU_1D}

Let $\{X_s\}_{s\in [0,t]}$ be a 1-dimensional OU process:
\begin{equation*}
\dd X_t = -\theta X_t \,\dd t + \sigma \,\dd W_t
\end{equation*}
for real constants $\sigma$ and $\theta$. Its expected signature $\Phi(t,x) := \E\left[S(X_t)\big|X_0=x\right]$ is given by\footnote{For $n \in \mathbb{Z}_{\geq -1}$, $n!! =
    \prod_{k=0}^{\frac{n+1}{2}}(2k-1)$ if  $n$  is odd, $\prod_{k=1}^{\frac{n}{2}}(2k)$ if $n$ is even, and $1$ if $n=0$ or $-1$.}
\begin{eqnarray*}
&&  \Phi_{2k+1}(t,x) = \frac{1}{(2k+1)!}\Bigg\{\sum_{i=0}^k {2k+1 \choose 2i}(-x)^{2k+1-2i}\left(1-e^{-\theta t}\right)^{2k+1-2i}\left(\frac{\sigma^2}{2\theta}\right)^i\left(1-e^{-2\theta t}\right)^i(2i-1)!!\Bigg\},\\
&&  \Phi_{2k}(t,x) = \frac{1}{(2k)!}\Bigg\{\sum_{i=0}^{k}{2k \choose 2i}(-x)^{2k-2i}\left(1-e^{-\theta t}\right)^{2k-2i}\left(\frac{\sigma^2}{2\theta}\right)^i\left(1-e^{-2\theta t}\right)^i(2i-1)!!\Bigg\}.
\end{eqnarray*}
\end{lemma}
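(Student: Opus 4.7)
The plan is to reduce the computation to evaluating moments of a Gaussian random variable, exploiting two facts: the one-dimensional signature collapses to monomials in the increment, and the OU solution with Gaussian increments admits an explicit mean and variance.

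First I would observe that in dimension one, because the Stratonovich iterated integral obeys the usual chain rule, one has the identity
\begin{equation*}
S^n\bigl(X_{[0,t]}\bigr) \;=\; \frac{(X_t - X_0)^n}{n!} \qquad \text{for every } n \ge 0,
\end{equation*}
which follows by induction on $n$ from $\int_0^t (X_s - X_0)^{n-1}\circ dX_s = (X_t - X_0)^n/n$. Consequently
\begin{equation*}
\Phi_n(t,x) \;=\; \frac{1}{n!}\,\E\bigl[(X_t - x)^n \,\big|\, X_0 = x\bigr],
\end{equation*}
and the problem is reduced to computing one-dimensional moments.

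Next I would solve the OU SDE explicitly via the integrating factor $e^{\theta t}$, obtaining
\begin{equation*}
X_t - x \;=\; -x\bigl(1 - e^{-\theta t}\bigr) \;+\; \sigma \int_0^t e^{-\theta(t-s)}\,dW_s.
\end{equation*}
Since the It\^o integral of a deterministic function is centred Gaussian, the random variable $X_t - x$ is Gaussian with mean $\mu_t := -x(1 - e^{-\theta t})$ and variance $v_t := \frac{\sigma^2}{2\theta}(1 - e^{-2\theta t})$, the latter by the It\^o isometry. The case $\theta = 0$ can be handled by continuity or treated separately and agrees with the limiting formula.

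Finally I would apply the standard Gaussian moment formula: for $Y \sim \mathcal{N}(\mu, v)$,
\begin{equation*}
\E[Y^n] \;=\; \sum_{i=0}^{\lfloor n/2 \rfloor} \binom{n}{2i}\, \mu^{n-2i}\, v^{i}\, (2i-1)!!,
\end{equation*}
obtained by binomial expansion of $(\mu + \sqrt{v}\,Z)^n$ with $Z \sim \mathcal{N}(0,1)$ and using $\E[Z^{2i}] = (2i-1)!!$, $\E[Z^{2i+1}] = 0$. Substituting $\mu = \mu_t$ and $v = v_t$ and separating parity ($n = 2k$ versus $n = 2k+1$) yields exactly the two stated formulae after dividing by $n!$. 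No step here is genuinely hard; the only subtlety is the initial identification of the Stratonovich one-dimensional signature with $(X_t - X_0)^n/n!$, which one should verify carefully since the underlying integral is a stochastic one. As a cross-check, one can verify that the resulting $\Phi_n$ satisfies the recursive PDE system of Theorem 2.1 specialised to $\mu = -\theta x$, $b = \sigma^2$, which provides an independent confirmation without altering the proof.
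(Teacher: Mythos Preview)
Your argument is correct. The paper itself does not give a proof but simply cites \cite[Lemma 4.6.1]{ni2012expected}; your three-step reduction (Stratonovich signature in one dimension collapses to $(X_t-x)^n/n!$, explicit Gaussian law of $X_t-x$ for the OU process, then the standard moment formula for a shifted Gaussian) is exactly the natural route and almost certainly what lies behind the cited reference.
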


\begin{proof}[Proof of Lemma~\ref{lemma:SolutionOU_1D}]
   See \cite[Lemma 4.6.1]{ni2012expected}. 
\end{proof}

Specialised to the problem of physical Brownian motion in 1-D, the SDE is $\dd P_t = -\frac{\scrm}{m}P_t \,\dd t + \dd W_t$ with initial datum $P\big|_{t=0}=p$. Lemma~\ref{lemma:SolutionOU_1D} with $\theta = \frac{\scrm}{m}$ and $\sigma = 1$ gives us the explicit solution to the expected signature $\Phi=\left\{\Phi_n\right\}$ of the momentum:
\begin{footnotesize}
\begin{eqnarray*}
&&  \Phi_{2k+1}(t,p) = \frac{1}{(2k+1)!}\Bigg\{\sum_{i=0}^k {2k+1 \choose 2i}(-p)^{2k+1-2i} \left(1-e^{-\frac{\scrm}{m} t}\right)^{2k+1-2i}\left(\frac{m}{2\scrm}\right)^i\left(1-e^{-2\frac{\scrm}{m} t}\right)^i(2i-1)!!\Bigg\},\\
&&  \Phi_{2k}(t,p) = \frac{1}{(2k)!}\Bigg\{\sum_{i=0}^{k}{2k \choose 2i}(-p)^{2k-2i}\left(1-e^{-\frac{\scrm}{m} t}\right)^{2k-2i}\left(\frac{m}{2\scrm}\right)^i\left(1-e^{-2\frac{\scrm}{m} t}\right)^i(2i-1)!!\Bigg\}.
\end{eqnarray*}

\end{footnotesize}

 
Observe that $\Phi_n$ is a degree-$n$ polynomial in $p$ consisting of only odd or even terms in accordance with the parity of $n$. In the limit $m \to 0^+$, except for the leading term $\frac{(-p)^n}{n!}$  in $\Phi_n (t,p)$ (with the top degree in $p$), all the other terms tend to zero at the rate $\lesssim \mathcal{O}(m)$.

\begin{proposition}\label{propn: 1D}
The expected signature $\Phi$ of a 1D Brownian particle  subject to the SDE
\begin{equation*}
\dd P_t = -\frac{\scrm}{m}P_t \,\dd t + \dd W_t,\qquad P\big|_{t=0}=p
\end{equation*}
has the following zero-mass limit: for any $p \in \R$, $t \in \R_+$, and $\scrm \in \R$, 
\begin{equation*}
\lim_{m \to 0^+}  \Phi(t,p) = \left(1, -p, \frac{(-p)^2}{2!}, \cdots, \frac{(-p)^n}{n!}, \cdots\right).
\end{equation*}
\end{proposition}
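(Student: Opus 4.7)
The plan is to read the conclusion essentially off of Lemma~\ref{lemma:SolutionOU_1D}, specialised to the parameter choice $\theta=\scrm/m$, $\sigma=1$ relevant to 1-D physical Brownian motion. The standing hypothesis that every eigenvalue of $\scrm$ has strictly positive real part reduces in one dimension to $\scrm > 0$, whence $e^{-\frac{\scrm}{m}t}$ decays faster than any power of $m$ as $m \to 0^+$, for every fixed $t > 0$. Consequently, both $\left(1-e^{-\frac{\scrm}{m}t}\right)^{j}$ and $\left(1-e^{-2\frac{\scrm}{m}t}\right)^{j}$ tend to $1$ for each integer $j \geq 0$, at an exponential rate in $m^{-1}$.

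With this in hand I would examine the explicit formula for $\Phi_n^\mm(t,p)$ summand by summand in the index $i$. Every contribution with $i \geq 1$ carries the prefactor $\left(\frac{m}{2\scrm}\right)^{\!i} = O(m^{i})$, while the remaining factors $\left(1-e^{-\frac{\scrm}{m}t}\right)^{n-2i}$ and $\left(1-e^{-2\frac{\scrm}{m}t}\right)^{i}$ are uniformly bounded by $1$, and the combinatorial weights $\binom{n}{2i}(2i-1)!!$ do not depend on $m$; thus every such summand is $O(m)$ and drops out in the limit. The unique surviving contribution is $i=0$, which, using $\binom{n}{0}=1$ and the convention $(-1)!! = 1$, collapses to $(-p)^{n}\left(1-e^{-\frac{\scrm}{m}t}\right)^{n}\!/n!$, and this tends to $(-p)^{n}/n!$ as $m\to 0^{+}$.

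Since each $\Phi_n^\mm(t,p)$ is a scalar in one space dimension, the $\ell^\infty$-norm of the difference $\Phi_n^\mm(t,p) - (-p)^n/n!$ coincides with its absolute value, and the preceding termwise estimate yields the claimed levelwise convergence of the full tensor series for every $n \in \mathbb{N}$. The main subtlety, though essentially cosmetic, is to treat the parities $n = 2k$ and $n = 2k+1$ uniformly (both formulas appear in Lemma~\ref{lemma:SolutionOU_1D}) and to verify that the surviving coefficient is indeed $1/n!$ rather than some spurious binomial or double-factorial artifact; this is immediate once $i=0$ is substituted. I do not foresee any genuine obstacle: Proposition~\ref{propn: 1D} is essentially a sanity check whose role is to provide a crisp limiting profile against which the substantially more delicate non-commutative analysis in arbitrary dimension in \S\ref{sec: arb dim} can be compared, since there the limit profile is no longer simply $(-p)^{n}/n!$ but is corrected by combinatorial contributions reflecting the non-commutativity of tensor products.
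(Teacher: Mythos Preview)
Your proposal is correct and follows essentially the same argument as the paper: the paper likewise specialises Lemma~\ref{lemma:SolutionOU_1D} to $\theta=\scrm/m$, $\sigma=1$, then observes that every summand with $i\geq 1$ carries a factor $(m/2\scrm)^i$ and hence vanishes as $m\to 0^+$, leaving only the $i=0$ term $\frac{(-p)^n}{n!}$. The paper in fact presents this as a one-line observation in the paragraph immediately preceding the proposition rather than as a formal proof, so your write-up is if anything more detailed.
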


\subsection{Higher dimensional case foreshadowed}

The above result for 1-D generalises to multiple dimensions in a highly non-trivial way: non-commutative effects become transparent for any $d \geq 2$. As a primer of the developments in the next section, let us point out that the first two levels of the expected signature in higher dimensions take the following forms: 
\begin{align*}
    \Phi_1^{d\geq 2}(t,p) &= \underbrace{\int_0^{\frac{t}{m}}\left(-\scrm e^{-\scrm t_1}\right)p\; \dd t_1}_{:= \fraka_1^\mm(t,p)} ;\\
    \Phi_2^{d\geq 2}(t,p) &=\underbrace{\idotsint\limits_{0\leq t_1<t_2\leq \frac{t}{m}}\left(-\scrm e^{-\scrm t_1}p\right)\otimes\left(-\scrm e^{-\scrm t_2}p\right)\dd t_1 \dd t_2}_{:=\fraka_2^\mm(t,p)}+\underbrace{\left(\scrm \cdot \dashint_0^{\frac{t}{m}} \ssig_\sigma\,\dd \sigma - \frac{\id}{2}\right) t}_{:= \frakb_2^\mm(t)} + {\rm Error}_2^\mm(t),
\end{align*}
where $\ssig_\sigma = \int_0^\sigma e^{-\scrm \varsigma} e^{-\scrm^*\varsigma}\,\dd\varsigma$ and $\left\|{\rm Error}_2^\mm\right\| \lesssim \mathcal{O}(m)$ as $m \to 0^+$.

When $d=1$, the term $\fraka_1^\mm(t,p)$ reduces to its 1D counterpart $(-p)\left(1-e^{-\frac{\scrm}{m} t}\right)$ derived in Proposition~\ref{propn: 1D}. Things are similar for the second level of expected signature: $\fraka_2^\mm(t,p)$ reduces to $\frac{(-p)^2}{2}\left(1-e^{-\frac{\scrm}{m} t}\right)^2$ when $d=1$, but the following direct extension is, in general, \emph{false}:
\begin{align*}
\fraka_2^\mm(t,p) = \frac{1}{2}\left(-p+e^{-\frac{\scrm}{m} t}p\right)^{\otimes 2} \qquad \text{ for } d \geq 2.
\end{align*} 
The failure is due to the asymmetry of $e^{-\scrm t_1}$ and $e^{-\scrm t_2}$.

When $d\geq 2$, the additional term  $\frakb_2^\mm(t)$ becomes indispensable in the small mass limit:
\begin{align*}
\frakb_2^\mm(t,p) = \left(\scrm \cdot \dashint_0^{\frac{t}{m}} \ssig_\sigma\,\dd \sigma - \frac{\id}{2}\right) t =\frac{1}{2} \left(\scrm \cdot \dashint_0^{\frac{t}{m}} \ssig_\sigma\,\dd \sigma - \dashint_0^{\frac{t}{m}} \ssig_\sigma\,\dd \sigma\cdot \scrm^*\right) t.
\end{align*}
Subsequent analyses reveal that this term is skew-Hermitian/anti-symmetric; hence, in the 1D case (where $\fraka_2^\mm$ is a real number) it must be zero.

The identification and proof for the small mass limit of the expected signature $\Phi_n$ in arbitrary dimensions is the main objective of the next section. 

\section{Small mass limit in arbitrary dimensions}\label{sec: arb dim}

Consider the physical Brownian motion in dimensions $d = 1,2,3,\ldots$. Recall the PDE for $\Phi_n^\mm(t,p)$ and the representation formula (Eqs.~\eqref{PDE for Phi, later}, \eqref{feynmann-kac, later}). We shall write $\Phi_n \equiv \Phi_n^\mm$ to emphasise the dependence on $m$, and similarly for any other variables. Our goal is to identify
\begin{align*}
    \lim_{m \to 0^+} \Phi^\mm_n(t,p) &\quad \text{for each $n = 1,2,3,\ldots$ in the $\ell^\infty$-tensor norm on $\left(\R^d\right)^{\otimes n}$}.
\end{align*}

\begin{convention}\label{convention}
Throughout this work, we take the $\ell^\infty$-norm on the tensor algebra $\tens$. That is, for each $n \in \mathbb{N}$ and any $\mathcal{T} \in \left(\R^d\right)^{\otimes n}$, express $\mathcal{T}$ uniquely as a linear combination of the canonical basis and take the supremum of absolute value of all the coefficients. The symbol $\|\mathcal{T}\|$ is reserved for the $\ell^\infty$-norm of tensors, unless otherwise indicated. It makes $\tens$ a Banach algebra: $\|\mathcal{T}\otimes\mathcal{S}\| \leq \|\mathcal{T}\| \|\mathcal{S}\|$ for any finite-rank tensors $\mathcal{T}$ and $\mathcal{S}$.

The $\ell^\infty$-norm is only taken for simplicity. Indeed, one may take  $\ell^r$-norm for any $r \in [1,\infty]$, as the $\ell^r$- and $\ell^\infty$-norms of $\mathcal{T} \in \left(\R^d\right)^{\otimes n}$ are equivalent modulo a constant $C$ depending only on $n$, $d$, and $r$. In particular, $C$ is independent of $p$ and $m$, hence is harmless for  the small mass limit. We refer to Ni--Xu \cite[pp.3--4]{nx} for the choice of tensor norms. \footnote{Here we inevitably encounter a conundrum: for $d\times d$ matrices $A$, $B$ viewed as rank-2 tensors, $\|AB\| \leq d\|A\|\|B\|$; the factor $d$ is in general indispensable. Thus, $\mathfrak{gl}(d;\C)$ under the $\ell^\infty$-norm  of tensors does \emph{not} form a Banach algebra under the matrix product. The reason lies in the discrepancy between tensor product and matrix product: the latter is a contraction of the former via taking trace, hence is not a natural product on the tensor algebra.}

\end{convention}

\subsection{Main theorem}\label{subsec: main thm}
Define for $n \in \{1, 2, 3, \cdots\}$:
\begin{align}\label{leading term fraka def}
\fraka_n^\mm(t,p) &:=\idotsint\limits_{0\leq t_1<\dots<t_n\leq t}\left(-\frac{\scrm}{m}e^{-\frac{\scrm}{m}t_1}p\right)\otimes\cdots\otimes\left(-\frac{\scrm}{m}e^{-\frac{\scrm}{m}t_n}p\right)\,\dd t_1\ldots\,\dd t_n\nonumber\\
&= \idotsint\limits_{0\leq t_1<\dots<t_n\leq t} \,\,\bigotimes_{j=1}^n  \left( -\mom e^{-\mom t_j} p\right)\, \dd t_1\ldots\,\dd t_n.
  \end{align}
Note here the non-commutative nature of the integrand. By convention we set $\Phi_0^\mm \equiv 1$ and $\fraka_0^\mm \equiv {\bf 1} :=(1,0,0,\ldots). $ Also define
\begin{align*}
\ssig_\sigma := \int_0^\sigma e^{-\scrm \varsigma} e^{-\scrm^*\varsigma}\,\dd\varsigma.
\end{align*}

Our main result of this paper, Theorem~\ref{thm: main, Dec24}, is reproduced as Theorem~\ref{main theorem for singular limit} and Corollary~\ref{cor, explicit limit} below. The latter presents the explicit formula for $\lim_{m \to 0^+} \Phi_n^{(m)}$, and the former gives the asymptotics for $\Phi_n^{(m)}$ with small $m>0$. In this subsection we shall deduce Corollary~\ref{cor, explicit limit} from Theorem~\ref{thm: main, Dec24}, while the proof of Theorem~\ref{thm: main, Dec24} occupies the remaining parts of the paper.

\begin{theorem}\label{main theorem for singular limit} Let $\Phi_{n}^\mm: \R^d\times\R_+ \to \left(\R^d\right)^{\otimes n}$ be the degree-$n$ expected signature of the momentum path $\left\{P_t\right\}$. Suppose  $\|\scrm\|\leq \Lambda$ and  there is some $K>0$ such that $\max\left\{\left\|e^{-\scrm^*\delta}\right\|,\,\left\|e^{-\scrm\delta}\right\|\right\} \leq K e^{- \lambda \delta}$ for each $\delta>0$,\footnote{See Lemma~\ref{lem: matrix expoential limit} for a justificiation of this assumption.} where $\lambda \leq\Lambda$ and $K$ are finite positive constants depending only on $\scrm$. Then for each $n = 1,2,3,\ldots$ we have
\begin{align*}
\Phi_n^\mm(t,p) &= \sum_{k=0}^{\lfloor {n}\slash{2}\rfloor} \left\{\fraka_{n-2k}^\mm(t,p) \otimes \left[ \left(\scrm \cdot\dashint_0^{{t}\slash{m}}\ssig_\sigma \,\dd \sigma -\frac{\id}{2}\right)t\right]^{\otimes k}  \right\} +  {\rm Error}_n^\mm(t,p),
\end{align*}
where ${\rm Error}_n^\mm(t,p)$ is a polynomial of degree $\leq \max\{n-2,0\}$ in $p$. Also,
\begin{align*}
\left\|{\rm Error}_n^\mm(t,p)\right\| \leq C\left(K, \Lambda, \lambda^{-1}, t, d, n\right)\, \left(1+|p|^{\max\{n-2,0\}}\right)m.
\end{align*}
In particular, this error term vanishes in the massless limit $m\to 0^+$ for each fixed $n,d \in \mathbb{N}$, $t \in ]0,\infty[$, $p \in \R^d$, and matrix $\scrm$ as above.  
\end{theorem}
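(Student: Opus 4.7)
The plan is to proceed by induction on $n$, using the Feynman--Kac representation~\eqref{feynmann-kac, later}
$$\Phi_n^\mm(t,p) = \E^p\!\left[\int_0^t \fs_n(t-s, P_s)\,\dd s\right]$$
to reduce the computation of $\Phi_n^\mm$ to expectations involving the lower-order terms $\Phi_{n-1}^\mm$ and $\Phi_{n-2}^\mm$, which are the only unknowns appearing in the source $\fs_n$. The base cases $n\in\{0,1\}$ are immediate: $\Phi_0^\mm\equiv 1=\fraka_0^\mm$, and $\Phi_1^\mm(t,p)=\E^p[P_t]-p=e^{-\mom t}p-p=\fraka_1^\mm(t,p)$, with zero error. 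The case $n=2$, already foreshadowed in \S\ref{sec: expsig of phys BM}, is the crucial base step producing the correction tensor $\mathfrak{B}^\mm(t):=\bigl(\scrm\cdot\dashint_0^{t/m}\ssig_\sigma\,\dd\sigma-\id/2\bigr)t$. I would verify this by direct computation from the explicit OU representation
$$P_s=e^{-\mom s}p+\int_0^s e^{-\mom(s-\tau)}\,\dd W_\tau,$$
tracing the $-\id/2$ inside $\mathfrak{B}^\mm$ back to the Stratonovich--vs--It\^o correction encoded in the $\tfrac12\sum_j e_j\otimes e_j$ term of $\fs_2$.

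For the inductive step $(n\ge 3)$, I would substitute the inductive expressions for $\Phi_{n-1}^\mm$ and $\Phi_{n-2}^\mm$ into $\fs_n$ and evaluate the Feynman--Kac integral term by term. Each piece has the shape
$$\E^p\!\left[\int_0^t\bigl(\text{linear tensor prefactor}\bigr)\otimes\fraka_j^\mm(t-s,P_s)\otimes[\mathfrak{B}^\mm(t-s)]^{\otimes k}\,\dd s\right].$$
Writing $P_s=e^{-\mom s}p+\tilde P_s$ with $\tilde P_s$ centred Gaussian, and expanding $\fraka_j^\mm$ (a polynomial of degree $j$ in its second argument) by the Gaussian moment formula, each retention of the mean reproduces a factor contributing to the signature of the mean path, while each pair-contraction of two $\tilde P_s$ factors produces a covariance block involving $\ssig$. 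The deterministic (all-mean) contributions then recombine via a Chen's-identity-type recursion on the simplex to reconstitute $\fraka_n^\mm(t,p)$; the single-pair contractions, together with the $\tfrac12\sum_j e_j\otimes e_j\otimes\Phi_{n-2}^\mm$ term from the generator $\mathcal A$, assemble into exactly one additional $\mathfrak{B}^\mm$ factor, incrementing $k$ by one; higher-order contractions and all mismatched combinations are subleading.

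Error bounds hinge on the exponential-decay hypothesis $\|e^{-\mom\delta}\|\le Ke^{-\lambda\delta/m}$, which yields the uniform-in-$m$ estimate $\int_0^T\|\mom\,e^{-\mom s}\|\,\dd s\le K\Lambda/\lambda$: any term in the expansion carrying a power of $\mom$ not matched by a corresponding $e^{-\mom\cdot}$ inside a simplex integral picks up an extra factor of $m$ and is placed in $\mathrm{Error}_n^\mm$. A useful organising principle is the degree in $p$: since $\fraka_{n-2k}^\mm$ is $(n-2k)$-homogeneous in $p$ and $\mathfrak{B}^\mm$ is $p$-independent, the top-degree-in-$p$ component of $\Phi_n^\mm$ must equal $\fraka_n^\mm(t,p)$, whence the error is forced to be a polynomial of degree $\le n-2$ in $p$. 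The main difficulty I anticipate is the combinatorial bookkeeping: matching precisely which tensor contractions produce each $\fraka_{n-2k}^\mm\otimes[\mathfrak{B}^\mm]^{\otimes k}$ block, versus which fall into the $O(m)$ remainder. Non-commutativity of $\otimes$, stressed at the end of \S\ref{sec: expsig of phys BM}, forbids na\"ive symmetrisation, so one must carefully track tensor positions --- in particular the asymmetry between the leftmost factor $-\mom P_s$ introduced by $\fs_n$ and the rightmost slot of $\fraka_{n-1}^\mm$ produced by the Chen-type recursion.
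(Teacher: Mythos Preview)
Your outline is essentially the paper's proof: induction via the Feynman--Kac representation, Gaussian-moment expansion of $\E^p\bigl[P_s^{\otimes\ell}\bigr]$, the uniform bound $\int_0^\infty\bigl\|\mom e^{-\mom s}\bigr\|\,\dd s\le K\Lambda d/\lambda$ to place remainders in ${\rm Error}_n^\mm$, and degree-in-$p$ tracking. The paper organises the inductive step by splitting the Feynman--Kac integral into nine pieces (the three summands of $\fs_n$ applied to each of the three parts $\fraka,\frakb,\err$ of the lower-order $\Phi$) and estimating each separately.

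One point in your sketch is off, and it is precisely the delicate bookkeeping you flag. Write $\Xi(t):=\scrm\cdot\dashint_0^{t/m}\ssig_\sigma\,\dd\sigma-\tfrac{\id}{2}$, so your $\mathfrak{B}^\mm(t)=\Xi(t)\,t$. For even $n$, the new top summand $[\Xi(t)\,t]^{\otimes n/2}$ does \emph{not} arise from pair contractions in the $\fraka_{n-1}^\mm$ source: those are all $O(m)$ once $n\ge 3$. It is assembled instead from three contributions, each acting on the bottom summand $\fraka_1^\mm\otimes[\Xi t]^{\otimes(n/2-1)}$ of $\frakb_{n-1}^\mm$ (respectively on $[\Xi t]^{\otimes(n/2-1)}$ in $\frakb_{n-2}^\mm$): the covariance pairing of the prefactor $-\mom P_s$ with the lone $P_s$ in $\fraka_1^\mm$ deposits $\scrm\cdot\dashint\ssig$ in the leading slot; the term $\sum_j e_j\otimes\p_{p_j}\Phi_{n-1}$ --- which your list omits --- deposits $-\id$; and $\tfrac12\sum_j e_j\otimes e_j\otimes\Phi_{n-2}$ deposits $+\tfrac12\id$. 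Only their sum $\scrm\cdot\dashint\ssig-\id+\tfrac12\id=\Xi$ closes the recursion. If you treat the $\p_{p_j}\Phi_{n-1}$ contribution as lower order, a spurious $\id\otimes[\Xi t]^{\otimes(n/2-1)}\,t$ of size $O(1)$ survives and the induction fails at the first even step.
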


\begin{remark}
We adopt the convention $\fraka_0^\mm(t,p) \equiv \fraka_0 = 
\frac{1}{n!} \1 = \left(\frac{1}{n!},0,0,\ldots\right).$  Theorem~\ref{main theorem for singular limit} shows that when $n$ is even, at $p=0$ we have
\begin{align*}
\Phi_n^\mm(t,0) = \left[ \scrm \cdot \dashint_0^{t/m} \ssig_\sigma\,\dd\sigma - \frac{\id}{2}\right]^{\otimes n/2} t^{n/2} + {\rm Error}_n^\mm(t,0).
\end{align*}
\end{remark}

As an immediate corollary of Theorem~\ref{main theorem for singular limit}, for each $q \in \mathbb{N}$ denote
\begin{align*}
    \overline{\fraka}_{q}(p) := \idotsint\limits_{0\leq t_1<\dots<t_q < \infty}\left(-\scrm e^{-\scrm t_1}p\right)\otimes\cdots\otimes\left(-\scrm e^{-\scrm t_q}p\right)\,\dd t_1\ldots\,\dd t_q,\qquad \scrc := \ssig_\infty = \int_0^\infty e^{-\scrm t}e^{-\scrm^* t}\,\dd t.
\end{align*}

\begin{corollary}\label{cor, explicit limit}
In the setting of Theorem~\ref{main theorem for singular limit}, it holds that
\begin{align*}
\lim_{m \to 0^+} \Phi^\mm_n(t,p) &= \sum_{k=0}^{\lfloor {n}\slash{2}\rfloor} \left\{\overline{\fraka}_{n-2k}(p) \otimes  \left(\scrm \scrc -\frac{\id}{2}\right)^{\otimes k}  \right\}t^k = \sum_{k=0}^{\lfloor {n}\slash{2}\rfloor} \left\{\overline{\fraka}_{n-2k}(p) \otimes  \left(\frac{\scrm \scrc - \scrc\scrm^*}{2}\right)^{\otimes k}  \right\}t^k
\end{align*}
in the $\ell^\infty$-norm of tensors. In particular, the massless limit at level $n$ is a polynomial of degree $n$ in $p$ and of degree $\lfloor {n}\slash{2}\rfloor$ in $t$. 
\end{corollary}

\begin{proof}[Proof of Corollary~\ref{cor, explicit limit}] 
In light of Theorem~\ref{main theorem for singular limit}, it suffices to show that
\begin{align*}
\fraka_q^\mm(t,p) \longrightarrow \overline{\fraka}_q(p)\qquad \text{and}\qquad\scrm\cdot\dashint_0^{{t}\slash{m}}\ssig_\sigma \,\dd \sigma -\frac{\id}{2} \longrightarrow \frac{\scrm\scrc - \scrc\scrm^*}{2}
\end{align*}
as $m\to 0^+$ for each $q \in \mathbb{N}$. In fact, these limits holds uniformly in $t$. 

The first limit is an immediate consequence of change of variables $\tau_j := t_j \slash m$ in the definition of $\fraka^\mm_q$. For the second one, it is enough to prove that 
\begin{align}\label{C matrix}
\lim_{T \to \infty} \dashint_0^{T} \ssig_\sigma\,\dd\sigma = \scrc.
\end{align}
This is because $A:=\scrm\scrc - \frac{\id}{2}$ is a skew-Hermitian matrix (\emph{cf.} Friz--Lyons--Gassiat \cite[p.7945]{friz2015physical}), so $A = \frac{A-A^*}{2} = \frac{\scrm\scrc - \scrc\scrm^*}{2}$ where $\scrc$ is  Hermitian. In Eq.~\eqref{C matrix} we take $T:=t\slash m \to \infty$ as $m \to 0^+$.

To see Eq.~\eqref{C matrix}, recall $\ssig_\sigma := \int_0^\sigma \scrg(s)\,\dd s$ with Hermitian matrix-valued function $\scrg(s):=e^{-\scrm s}e^{-\scrm^*s}$. By Fubini's theorem, $\dashint_0^{T} \ssig_\sigma\,\dd\sigma  
= \frac{1}{T} \int_0^T \int_s^T \scrg(s)\,\dd\sigma\,\dd s = \int_0^T \scrg(s)\,\dd s - \frac{1}{T} \int_0^T s\scrg(s)\,\dd s$. Integration by parts and the assumption $\max\left\{\left\|e^{-\scrm^*\delta}\right\|,\,\left\|e^{-\scrm\delta}\right\|\right\} \leq K e^{-\lambda\delta}$ for each $\delta>0$ then yield that $\left\|\dashint_0^{T} \ssig_\sigma\,\dd\sigma - \int_0^T \scrg(s)\,\dd s\right\| \leq \frac{K^2d}{4\lambda^2T} \to 0$ as $T \to \infty$. Thus Eq.~\eqref{C matrix} follows.   \end{proof}

For a (tensor-valued) function $f$, we set 
 $  \bra f \ket :=  \lim_{T\to\infty}\frac{1}{T} \int_0^T f(t)\,\dd t. $
It is shown along the proof of Corollary~\ref{cor, explicit limit} that 
  $  \bra \ssig_\bullet\ket  = \ssig_\infty = \int_0^\infty e^{-\scrm t}e^{-\scrm^*t}\,\dd t =: \scrc.$
That is, the large-time average of $\ssig$ coincides with its limit, which is known to exist. This is a recurring theme in ergodic theory, and our computation agrees with Friz--Lyons--Gassiat \cite{friz2015physical}.  

There is the scope to prove the strong convergence in a general case where $p\neq 0$ by extending  \cite{friz2015physical}. It, however, is out of the scope of this paper and is left for future work.
\subsection{Preparatory results}
Several simple lemmas are in order. See Appendix~\ref{sec: appendix, lemma} for the proofs.

\subsubsection{A linear algebra lemma}

The lemma below justifies our assumption on matrix norms in Theorem~\ref{main theorem for singular limit}. 
Recall from Convention~\ref{convention} that all matrix norms are the $\ell^\infty$-norm.

\begin{lemma}\label{lem: matrix expoential limit}
Assume that all the eigenvalues of $\scrm \in \mathfrak{gl}(d;\C)$ have real parts no less than $\lambda>0$. Then, for some constant $K>0$ independent of $\lambda$, one has that $\left\|e^{-\scrm\delta}\right\| \leq K e^{-\lambda \delta}$ for any $\delta>0$.
\end{lemma}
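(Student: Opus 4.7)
The plan is to reduce the estimate to the Jordan canonical form of $\scrm$. Decomposing $\scrm = Q(D+N)Q^{-1}$, with $D$ diagonal carrying the eigenvalues $\lambda_1, \ldots, \lambda_d$ of $\scrm$ and $N$ nilpotent satisfying $N^d = 0$ and $DN = ND$, the commutativity of $D$ and $N$ gives
\begin{equation*}
e^{-\scrm\delta} = Q\, e^{-D\delta}\, e^{-N\delta}\, Q^{-1}.
\end{equation*}
The diagonal factor is controlled directly by the hypothesis: $\|e^{-D\delta}\| = \max_{j} |e^{-\lambda_j \delta}| \leq e^{-\lambda \delta}$, since $\mathfrak{Re}(\lambda_j)\geq \lambda$ for every $j$. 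The nilpotent factor expands as a finite sum $e^{-N\delta} = \sum_{j=0}^{d-1}\frac{(-N)^j \delta^j}{j!}$, producing a prefactor $P(\delta)$ that is a polynomial in $\delta$ of degree at most $d-1$ with coefficients depending only on $\|N\|$ and $d$.

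Combining these bounds with the dimension-dependent constants arising from iterated matrix multiplication in the $\ell^\infty$-tensor norm (\emph{cf.} Convention~\ref{convention}) yields
\begin{equation*}
\|e^{-\scrm\delta}\| \;\leq\; C(d)\,\|Q\|\,\|Q^{-1}\|\, P(\delta)\, e^{-\lambda \delta}.
\end{equation*}
The only nontrivial step — and the one I would flag as the main obstacle — is absorbing the polynomial prefactor $P(\delta)$ into the exponential without losing the precise rate $\lambda$ advertised in the statement. The standard device is the elementary observation that for every $\eta>0$ there exists $K(\eta)$ with $P(\delta) \leq K(\eta)\, e^{\eta \delta}$, producing $\|e^{-\scrm\delta}\| \leq K\, e^{-(\lambda-\eta)\delta}$. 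Since the spectral hypothesis tolerates replacing $\lambda$ by any strictly smaller positive constant, this delivers the stated bound upon interpreting $\lambda$ as any value strictly less than $\min_j \mathfrak{Re}(\lambda_j)$, with $K$ depending on the spectral gap rather than on $\lambda$ itself.

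When $\scrm$ is in addition diagonalisable — precisely the regime in which the explicit closed-form limit in Theorem~\ref{main theorem for singular limit} is available — the nilpotent part vanishes, so $N=0$, the prefactor $P$ is constant, and the argument produces the sharp bound $\|e^{-\scrm\delta}\|\leq K\, e^{-\lambda\delta}$ with $\lambda = \min_j \mathfrak{Re}(\lambda_j)$ and $K = C(d)\|Q\|\|Q^{-1}\|$. This confirms that the hypothesis of Theorem~\ref{main theorem for singular limit} is consistent with the physical assumption on $\scrm$ carried over from Eq.~\eqref{physical BM, x eqn}.
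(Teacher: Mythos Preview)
Your Jordan-form argument matches the paper's. You are in fact more careful on the one subtle point: the paper reduces to $\delta=1$ and then invokes ``a simple scaling'' for general $\delta$, but scaling tacitly replaces $N$ by $N\delta$ and hence reintroduces exactly the polynomial-in-$\delta$ prefactor you flag --- your resolution (sacrificing an arbitrarily small $\eta>0$ in the exponent, or passing to the diagonalisable case where $N=0$) is the correct way to obtain a $\delta$-independent $K$, and the paper's sketch does not explicitly supply this step.
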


\subsubsection{Averaged integrals}

An elementary lemma will be frequently used later:
\begin{lemma}\label{lem: average difference}
Let $f: [0,\infty[ \to \R$ be a continuous function with $\|f\|_{L^\infty([0,\infty[)} \leq C_0$. Then, for any positive numbers $t, \delta$ such that $t-\delta > 0$, it holds that $\left|\dashint_0^t f - \dashint_0^{t-\delta} f\right| \leq \frac{2 C_0 \delta}{t}.$
\end{lemma}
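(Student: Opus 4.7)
The estimate is an elementary decomposition-plus-triangle-inequality exercise, and the plan is essentially forced by the structure of the quantity on the left-hand side. First I would split the domain of the larger integral about the point $t-\delta$, writing
\begin{align*}
\int_0^t f(s)\,\dd s \,=\, \int_0^{t-\delta} f(s)\,\dd s \,+\, \int_{t-\delta}^{t} f(s)\,\dd s,
\end{align*}
so that, after regrouping,
\begin{align*}
\dashint_0^t f \,-\, \dashint_0^{t-\delta} f \,=\, \left(\frac{1}{t}-\frac{1}{t-\delta}\right)\int_0^{t-\delta} f(s)\,\dd s \,+\, \frac{1}{t}\int_{t-\delta}^{t} f(s)\,\dd s.
\end{align*}
This isolates the two genuinely different sources of discrepancy between the two averages: the change in the normalising denominator (the first summand), and the new mass contributed on the window $[t-\delta,t]$ (the second).

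Next I would apply the uniform pointwise bound $|f|\leq C_0$ to each piece. Using the elementary identity $\frac{1}{t-\delta}-\frac{1}{t}=\frac{\delta}{t(t-\delta)}$, the first summand is majorised by $\frac{\delta}{t(t-\delta)}\cdot C_0(t-\delta)=\frac{C_0\delta}{t}$, and the second by $\frac{1}{t}\cdot C_0\delta=\frac{C_0\delta}{t}$. The triangle inequality then furnishes the claimed bound $\frac{2C_0\delta}{t}$.

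There is no real obstacle here: continuity of $f$ is used only to make the Riemann integrals classically meaningful, and the argument extends verbatim to any $f\in L^\infty_{\loc}([0,\infty[)$. The factor of $2$ in the statement is sharp up to minor slack, as both summands above are simultaneously of order $\frac{C_0\delta}{t}$ for generic sign-definite $f$. Presumably this lemma will be used later in \S\ref{sec: arb dim} to absorb discrepancies of the form $\dashint_0^{t/m}\ssig_\sigma\,\dd\sigma - \dashint_0^{(t-\delta)/m}\ssig_\sigma\,\dd\sigma$ into $\mathrm{Error}_n^\mm$, with the boundedness of $\ssig_\bullet$ guaranteed by the exponential decay bound $\|e^{-\scrm\varsigma}e^{-\scrm^*\varsigma}\|\leq K^2 e^{-2\lambda\varsigma}$ from Lemma~\ref{lem: matrix expoential limit}.
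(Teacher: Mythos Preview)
Your proof is correct and follows essentially the same approach as the paper: the paper writes the identity $\dashint_0^t f - \dashint_0^{t-\delta} f = \frac{1}{t}\int_{t-\delta}^t f - \frac{\delta}{t(t-\delta)}\int_0^{t-\delta}f$ and bounds each term by $C_0\delta/t$, which is exactly your decomposition with the coefficient $\frac{1}{t}-\frac{1}{t-\delta}=-\frac{\delta}{t(t-\delta)}$ made explicit. Your remarks on the later application to $\ssig_\bullet$ are also on target.
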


\subsubsection{Expected signature is a polynomial in the initial momentum} \label{subsec: Phi n polyl in p}
Recall $\Phi_0 = (1,0,0,\ldots)$. Direct calculation (see \S\ref{subsec: Phi 1--4}) gives us $\Phi_1^\mm(t,p) = -\mom \int_0^t e^{-\mom s}p\,\dd s = \left(e^{-\mom t} - \id\right)p.$ Inducting on $n$, we find from Eq.~\eqref{PDE for Phi, later} that the source terms $\fs_n = \fs_n^\mm$ are degree-$n$ polynomials in $p$.  Then by Feynman--Kac formula~\eqref{feynmann-kac, later} we have the following (see \cite[Theorem~4.12]{ni2012expected} for a proof):
\begin{lemma}\label{lem: degree in p}
The $n^{\text{th}}$-level expected signature $\Phi_n^\mm(t,p)$ is a polynomial in $p$ of degree $\leq n$. 
\end{lemma}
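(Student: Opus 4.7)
The plan is to prove Lemma~\ref{lem: degree in p} by strong induction on $n$, leveraging the graded structure of the PDE system~\eqref{PDE for Phi, later} together with the explicit affine-in-$p$ representation of the OU solution $P_s$ with $P_0=p$.

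Base cases are immediate: $\Phi_0 \equiv 1$ has degree $0$, and the computation $\Phi_1^\mm(t,p)=\left(e^{-\mom t}-\id\right)p$ carried out in \S\ref{subsec: Phi n polyl in p} shows $\Phi_1^\mm$ is degree $1$ in $p$. For the inductive step, suppose $\Phi_k^\mm(t,p)$ is a polynomial of degree at most $k$ in $p$ for every $k<n$. The source term in~\eqref{PDE for Phi, later} reads
\begin{align*}
\fs_n(t,p) = -\mom p \otimes \Phi_{n-1}^\mm(t,p) + \sum_{j=1}^d e_j \otimes \p_{p_j}\Phi_{n-1}^\mm(t,p) + \frac{1}{2}\sum_{j=1}^d e_j\otimes e_j \otimes \Phi_{n-2}^\mm(t,p).
\end{align*}
The first summand is the tensor product of a linear-in-$p$ factor with a degree-$(n-1)$ polynomial, hence is of degree at most $n$. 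The second summand is a first derivative in $p$ of a degree-$(n-1)$ polynomial and so has degree at most $n-2$. The third summand has degree at most $n-2$ by hypothesis. Thus $p \mapsto \fs_n(t,p)$ is a polynomial of total degree at most $n$ for every $t \in [0,T]$.

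Next, I invoke the Feynman--Kac representation~\eqref{feynmann-kac, later}, namely
\begin{align*}
\Phi_n^\mm(t,p) = \E^p\!\left[\int_0^t \fs_n\!\left(t-s, P_s\right)\,\dd s\right].
\end{align*}
The crux is that the OU solution to~\eqref{main SDE, equation for P} admits the closed form
\begin{align*}
P_s = e^{-\mom s}p + \int_0^s e^{-\mom (s-u)}\,\dd W_u =: e^{-\mom s}p + Z_s,
\end{align*}
where $Z_s$ is a Gaussian random vector whose law does \emph{not} depend on $p$. Therefore, each monomial of degree $\ell\leq n$ in $P_s$ expands, via the multinomial theorem, into a polynomial of degree at most $\ell$ in $p$ with coefficients that are deterministic tensor-valued functions of $Z_s$. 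Taking the expectation over $Z_s$ (whose moments are all finite) eliminates the randomness but preserves the degree bound in $p$; integrating in $s\in[0,t]$ is likewise benign for the polynomial degree in $p$. Consequently $\Phi_n^\mm(t,p)$ is a polynomial in $p$ of degree at most $n$, closing the induction.

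The only step requiring care is the passage from a degree bound on $\fs_n(\cdot,\cdot)$ as a function of its \emph{spatial argument} to a degree bound on $\E^p[\fs_n(t-s,P_s)]$ as a function of the \emph{initial datum} $p$. This is not automatic for general diffusions, but here it is clean because the drift is linear and the diffusion coefficient is constant, so $P_s$ is affine in $p$ with a $p$-independent stochastic part. Verifying this point rigorously (by writing out the multinomial expansion and checking that the Gaussian expectations $\E[Z_s^{\otimes k}]$ are finite tensors, \emph{e.g.}, via Isserlis--Wick) is the only mild technicality, and it is absorbed into a single line in the argument above.
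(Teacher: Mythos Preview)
Your proof is correct and follows essentially the same route as the paper's: the paper's preceding paragraph already outlines the induction-plus-Feynman--Kac scheme, and its proof of the lemma simply cites \cite[Theorem~4.12]{ni2012expected}. You have filled in the one nontrivial detail that the paper leaves to the reference --- namely, that the OU solution $P_s=e^{-\mom s}p+Z_s$ is affine in $p$ with $p$-independent noise $Z_s$, so that substituting $P_s$ into the degree-$n$ polynomial $\fs_n$ and taking expectations preserves the degree bound.
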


In addition,  consider the decomposition $    \mathfrak{S}_n =  \mathfrak{S}^{\rm high}_n  + \mathfrak{S}^{\rm low}_n$, where
\begin{eqnarray*}
&& \mathfrak{S}^{\rm high}_n (t,p):= -\mmp \otimes \Phi_{n-1}(t,p),\\
&& \mathfrak{S}^{\rm low}_n (t,p):=\sum_{j=1}^d e_j \otimes \p_{p_j}\Phi_{n-1}(t,p) + \frac{1}{2}\sum_{j=1}^d e_j \otimes e_j \otimes \Phi_{n-2}(t,p). 
\end{eqnarray*}
Lemma~\ref{lem: degree in p}  implies that $\deg_p \mathfrak{S}^{\rm high}_n -\deg_p  \mathfrak{S}^{\rm low}_n \geq 2$ whenever $n \geq 2$.

\subsubsection{Tensor product}\label{Tensor product of linear mappings}
For vector spaces $V_1, V_2, W_1, W_2$ over the same field and linear mappings $T \in \mathcal{L}(V_1, W_1)$, $S \in \mathcal{L}(V_2, W_2)$, we set $T \otimes S \in \mathcal{L}(V_1 \otimes V_2, W_1 \otimes W_2)$ by $$(T \otimes S)(v\otimes w) = (Tv) \otimes (Sw)$$ for all $v \in V_1$ and $w \in W_1$. This is also referred to as the \emph{mixed product property} of tensor product. If everything is finite-dimensional, it agrees with the \emph{Kronecker product} of matrices.

For $A,B \in \mathfrak{gl}(d;\C)$; $v,w \in \C^d$,  the matrix $(A \otimes B)(v \otimes w) \in \mathfrak{gl}(d;\C)$ has  components 
\begin{align*}
    \big[(A \otimes B)(v \otimes w)\big]^{ij} = \big[(Av) \otimes (Bw)\big]^{ij} = (Av)^i(Bw)^j = \sum_{\alpha,\beta=1}^{d} A^i_\alpha v^\alpha B^j_\beta w^\beta.
\end{align*}
It is more convenient to view $A,B$ as in $\C^d \otimes \left(\C^d\right)^*$, \emph{i.e.}, as type-$(1,1)$-tensors, and  $(A \otimes B)(v \otimes w) \in \C^d \otimes \C^d$, \emph{i.e.}, as a rank-2 contravariant tensor. More generally, given $A,B,Z \in \mathfrak{gl}(d;\C)$ and viewing  $A,B \in \C^d \otimes \left(\C^d\right)^*$; $Z \in \C^d \otimes \C^d$, we define $(A \otimes B) \cdot Z \in \C^d \otimes \C^d \cong \mathfrak{gl}(d,\C)$ by
\begin{align*}
\big[(A \otimes B) \cdot Z\big]^{ij} =\sum_{\alpha,\beta=1}^{d} A^i_\alpha B^j_\beta Z^{\alpha\beta}\qquad\text{for each } i,j \in \{1,\ldots,d\}.
\end{align*}

One further notation: given tensors $\Upsilon, \Theta \in \mathfrak{gl}(d;\C)^{\otimes n}$ with arbitrary $d,n \in \mathbb{N}_{\geq 1}$, we define a natural product, denoted as $\Upsilon\Theta\in \mathfrak{gl}(d;\C)^{\otimes n}$, which reduces to  matrix multiplication when  $n=1$. In Example~(Diagonalisable $\scrm$) we shall to compute $Q^{\otimes n}\Lambda Q^{\otimes (-n)}$ for $Q\in\mathfrak{gl}(d;\C)$,  $\Lambda\in\mathfrak{gl}(d;\C)^{\otimes n}$.

For the sake of clarity, we first consider the case $n=2$. Under the identification $\mathfrak{gl}(d;\C) \cong \left(\C^d\right)^* \otimes \C^d$, we can represent each matrix $A \in \mathfrak{gl}(d;\C)$ componentwise as $A = \left\{A^i_j\right\}$. Then, for $A,B,C,D \in \mathfrak{gl}(d;\C)$ and $i,j,k,\ell \in \{1,\ldots,d\}$ we have
\begin{align*}
    \big[(AB) \otimes (CD)\big]^{ij}_{k\ell} = \sum_{\alpha,\beta=1}^d A^i_\alpha B^\alpha_k C^j_\beta D^\beta_\ell = \big[ (A\otimes C) (B\otimes D) \big]^{ij}_{k\ell}.
\end{align*}
The second equality should be understood as follows: for $\xi, \theta \in \mathfrak{gl}(d;\C)^{\otimes 2} \cong \left(\C^d\right)^* \otimes \C^d \otimes \left(\C^d\right)^* \otimes \C^d$, write $\xi = \left\{\xi^{ij}_{k\ell}\right\}$ and $\theta = \left\{\theta^{ij}_{k\ell}\right\}$. Then the juxtaposition $\xi\theta \in \mathfrak{gl}(d;\C)^{\otimes 2}$ is given by
\begin{align*}
   \big[\xi \theta\big]^{ij}_{k\ell} = \sum_{\alpha,\beta=1}^d\xi^{ij}_{\alpha\beta} \theta^{\alpha\beta}_{k \ell}. 
\end{align*}
When $n=1$ this is just the usual matrix multiplication. In particular, when $\xi = A \otimes C$ for $A,C\in \mathfrak{gl}(d;\C) \cong \left(\C^d\right)^* \otimes \C^d$, one takes  $\left(A\otimes C\right)^{ij}_{\alpha\beta} = A^i_\alpha C^j_\beta$ for any $i,j,\alpha,\beta \in \{1,\ldots,d\}$.

For general $n \in \mathbb{N}$, as $\dim \left[\mathfrak{gl}(d;\C)^{\otimes n}\right] = d^{2n}$, there are $2n$ indices for each generic element in $\mathfrak{gl}(d;\C)^{\otimes n}$. Thus, let us consider an arbitrary pair of multi-indices $I=(i_1, \ldots, i_n)$ and $J = (j_1, \ldots, j_n) \in \{1,\ldots,d\}^n$. Each $\Upsilon \in \mathfrak{gl}(d;\C)^{\otimes n}$ can be represented componentwise as
\begin{align*}
    \Upsilon = \left\{\Upsilon^I_J\right\} := \left\{\Upsilon^{i_1, \ldots, i_n}_{j_1, \ldots, j_n}\right\}.
\end{align*} 
Then, for $\Upsilon, \Theta \in \mathfrak{gl}(d;\C)^{\otimes n}$ the product $\Upsilon \Theta = \left\{\Upsilon \Theta^I_J \right\}  \in \mathfrak{gl}(d;\C)^{\otimes n}$ is given componentwise by
\begin{align*}
\left[\Upsilon \Theta\right]^I_J = \sum_{K\in \{1,\ldots,d\}^n} \Upsilon^I_K \Theta^K_J = \sum_{k_1=1}^d \cdots \sum_{k_n=1}^d \Upsilon^{i_1, \ldots, i_n}_{k_1, \ldots, k_n} \Upsilon^{k_1, \ldots, k_n}_{j_1, \ldots, j_n}.
\end{align*}


\subsubsection{Expectation of tensor powers of multi-dimensional Gaussians}

In the proof of Theorem~\ref{main theorem for singular limit}, a crucial step is to evaluate $\E^p \left[\left(P_{t}\right)^{\otimes {n}}\right]$. The momentum $P_t$ is a $d$-dimensional Gaussian vector: 
\begin{lemma}\label{lem: mean and cov}
$\left\{P_t\right\}$ is normally distributed with mean and covariance 
\begin{align*}
\boldsymbol \mu_t = e^{-\mom t} p; \qquad  \boldsymbol \Sigma_t = \int_0^t e^{-\mom s}e^{-\frac{\scrm^*}{m}s} \,\dd s\qquad\text{(independent of $p$)}.
\end{align*}
\end{lemma}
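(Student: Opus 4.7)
The plan is to solve the linear SDE \eqref{main SDE, equation for P} explicitly via an integrating factor, and then read off the mean and covariance from the explicit solution. First I would apply the It\^o product rule to $e^{\mom t}P_t$. Since $t\mapsto e^{\mom t}$ is deterministic and smooth, there is no quadratic covariation between $e^{\mom t}$ and $P_t$, so
\begin{equation*}
  \dd\!\left(e^{\mom t}P_t\right) = \mom e^{\mom t}P_t\,\dd t + e^{\mom t}\,\dd P_t = e^{\mom t}\,\dd W_t,
\end{equation*}
using the SDE for $\dd P_t$ to cancel the drift. Integrating from $0$ to $t$ and using $P_0=p$ yields the closed-form representation
\begin{equation*}
  P_t = e^{-\mom t}p \,+\, \int_0^t e^{-\mom(t-s)}\,\dd W_s.
\end{equation*}

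From this representation, the claims are immediate. The first term is deterministic, while the second is a Wiener integral of a deterministic matrix-valued function against the standard Brownian motion $W$; hence $P_t$ is a $d$-dimensional Gaussian vector whose mean is the deterministic term $\boldsymbol{\mu}_t = e^{-\mom t}p$. For the covariance, applying the It\^o isometry (or equivalently the Wiener isometry valid for deterministic integrands) gives
\begin{equation*}
  \boldsymbol{\Sigma}_t \,=\, \E\!\left[(P_t-\boldsymbol{\mu}_t)(P_t-\boldsymbol{\mu}_t)^*\right] \,=\, \int_0^t e^{-\mom(t-s)}e^{-\frac{\scrm^*}{m}(t-s)}\,\dd s.
\end{equation*}
A change of variables $u=t-s$ converts this into the claimed form $\int_0^t e^{-\mom u}e^{-\frac{\scrm^*}{m}u}\,\dd u$.

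There is no real obstacle here, as this is a standard multidimensional Ornstein--Uhlenbeck calculation; the only point requiring minor care is that $\scrm$ and $\scrm^*$ need not commute, so the two matrix exponentials in the integrand of $\boldsymbol{\Sigma}_t$ must be kept in the displayed order and cannot be combined into $e^{-(\scrm+\scrm^*)u/m}$. Gaussianity of $P_t$ can alternatively be justified by noting that $\{P_t\}$ is a Gaussian process (being a linear functional of $W$), so only the two moments above need to be computed.
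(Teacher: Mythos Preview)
Your proof is correct and is precisely the ``direct computation'' the paper alludes to without spelling out. The paper's own proof consists only of the words ``Direct computation,'' so you have supplied the standard details (integrating factor, Wiener integral, It\^o isometry, change of variables) that the authors left implicit.
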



\begin{lemma}\label{lem:GaussianMoment}
Let $G\sim \mathcal{N}(\boldsymbol{\mu}, \mathbf{\Sigma})$ be a $d$-dimensional Gaussian variable. Then its $q^{\text{th}}$ moment for any $q \in \mathbb{N}$ is given by  $\mathcal{M}^q :=  \E\left[G^{\otimes q}\right] = \sum_{k=0}^{\lfloor q/2\rfloor}{q \choose 2k}\frac{(2k)!}{k!2^k}\,\sym\left(\boldsymbol{\mu}^{\otimes (q-2k)}\otimes \mathbf{\Sigma}^{\otimes k}\right).$
\end{lemma}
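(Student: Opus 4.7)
The plan is to exploit the fact that $\mathcal{M}^q = \E[G^{\otimes q}]$ is automatically a symmetric tensor in $(\R^d)^{\otimes q}$, and hence is uniquely determined by its associated polynomial $\xi \mapsto \langle \xi^{\otimes q}, \mathcal{M}^q\rangle$ on $\R^d$. This reduces the multivariate statement to a univariate Gaussian moment computation.

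First, for arbitrary $\xi \in \R^d$, I would observe that the scalar random variable $Y := \langle \xi, G\rangle$ is a univariate Gaussian $\mathcal{N}(m_\xi, \sigma_\xi^2)$ with $m_\xi := \langle \xi, \boldsymbol{\mu}\rangle$ and $\sigma_\xi^2 := \langle \xi, \boldsymbol{\Sigma}\xi\rangle$. Writing $Y = m_\xi + \sigma_\xi Z$ with $Z \sim \mathcal{N}(0,1)$, expanding $Y^q$ via the binomial theorem, and invoking the classical formula $\E[Z^{2j}] = (2j-1)!! = \frac{(2j)!}{2^j j!}$ (with odd moments vanishing), I obtain
\begin{equation*}
\E[Y^q] = \sum_{k=0}^{\lfloor q/2\rfloor} \binom{q}{2k}\frac{(2k)!}{k!\,2^k}\, m_\xi^{\,q-2k}\,\sigma_\xi^{\,2k}.
\end{equation*}

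Next, I would note the two identifications $\E[Y^q] = \E[\langle \xi,G\rangle^q] = \langle \xi^{\otimes q}, \mathcal{M}^q\rangle$ on the one side, and
\begin{equation*}
m_\xi^{\,q-2k}\,\sigma_\xi^{\,2k} = \langle \xi,\boldsymbol{\mu}\rangle^{q-2k}\langle \xi,\boldsymbol{\Sigma}\xi\rangle^k = \bigl\langle \xi^{\otimes q},\, \boldsymbol{\mu}^{\otimes(q-2k)}\otimes \boldsymbol{\Sigma}^{\otimes k}\bigr\rangle
\end{equation*}
on the other. Combining these gives, for every $\xi \in \R^d$,
\begin{equation*}
\bigl\langle \xi^{\otimes q},\, \mathcal{M}^q\bigr\rangle = \sum_{k=0}^{\lfloor q/2\rfloor}\binom{q}{2k}\frac{(2k)!}{k!\,2^k}\,\bigl\langle \xi^{\otimes q},\, \boldsymbol{\mu}^{\otimes(q-2k)}\otimes \boldsymbol{\Sigma}^{\otimes k}\bigr\rangle.
\end{equation*}
Since $\xi^{\otimes q}$ is a symmetric tensor, its pairing with any rank-$q$ tensor equals its pairing with that tensor's symmetrization. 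The polarization principle (symmetric rank-$q$ tensors are determined by their evaluations on $\xi^{\otimes q}$, $\xi \in \R^d$) together with the symmetry of $\mathcal{M}^q$ then upgrades the scalar identity to the claimed tensorial identity, with the symmetrization operator $\sym(\cdot)$ arising precisely because $\boldsymbol{\mu}^{\otimes(q-2k)}\otimes \boldsymbol{\Sigma}^{\otimes k}$ is not itself symmetric.

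I anticipate no serious obstacle, as the argument proceeds through well-established tools. The only subtlety is ensuring that the combinatorial coefficient $\binom{q}{2k}\tfrac{(2k)!}{k!\,2^k}$ is correctly interpreted when paired with $\sym(\cdot)$: it is convenient to check that this number coincides with the count of set partitions of $\{1,\ldots,q\}$ into $q-2k$ singletons (for the $\boldsymbol{\mu}$-slots) and $k$ unordered pairs (for the $\boldsymbol{\Sigma}$-slots), which matches the Isserlis (Wick) pairing count $(2k-1)!!$ for a centred Gaussian of even order $2k$. This provides a direct combinatorial cross-check; an alternative route would bypass polarization entirely by writing $G = \boldsymbol{\mu} + Z$, expanding $G^{\otimes q}$ over subsets indicating $Z$-positions, symmetrizing, and applying Isserlis' theorem to $\E[Z^{\otimes 2k}]$.
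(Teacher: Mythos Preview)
Your argument is correct. The paper, however, does not supply its own proof of this lemma at all: it simply cites Pereira--Kileel--Kolda \cite{Pereira2022}, Theorem~4.1. So your route via polarization --- pairing with $\xi^{\otimes q}$ to reduce to the one-dimensional moment formula and then invoking the fact that a symmetric rank-$q$ tensor is determined by its restriction to the diagonal $\{\xi^{\otimes q}:\xi\in\R^d\}$ --- is genuinely more informative than what the paper records. The benefit of your approach is that it is self-contained and makes the appearance of $\sym(\cdot)$ and of the coefficient $\binom{q}{2k}\tfrac{(2k)!}{k!2^k}=\binom{q}{2k}(2k-1)!!$ transparent; the paper's choice simply keeps the exposition short by outsourcing a standard fact.
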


See \cite[Theorem 4.1]{Pereira2022} for a proof of Lemma~\ref{lem:GaussianMoment}. The first four moments are thus $ \mathcal{M}^1 = \boldsymbol{\mu}$, $\mathcal{M}^2 = \boldsymbol{\mu}^{\otimes 2}+\boldsymbol{\Sigma}$, $\mathcal{M}^3  = \boldsymbol{\mu}^{\otimes 3}+3\,\sym(\boldsymbol{\mu}\otimes \mathbf{\Sigma})$, and $\mathcal{M}^4 =\boldsymbol{\mu}^{\otimes 4}+6\,\sym\left(\boldsymbol{\mu}^{\otimes 2}\otimes \mathbf{\Sigma}\right)+3\,\sym\left(\mathbf{\Sigma}^{\otimes 2}\right)$. Here and hereafter, for a rank-$k$ tensor $T$ we set $\sym(T) := \frac{1}{k!} \sum_{\sigma} \sum_{i_1,\ldots, i_k} T_{i_1, \ldots, i_k} e_{i_{\sigma(1)}} \otimes \ldots\otimes e_{i_{\sigma(k)}}$, where $\sigma$ ranges over permutations of $\{1,\ldots,k\}$ and  $i_j$ ranges over $\{1,\ldots,d\}$.  In the sequel we shall use Lemma~\ref{lem:GaussianMoment} to compute $\E^p \left[\left(P_{t}\right)^{\otimes {n}}\right]$.

\subsubsection{A trick of symmetrisation}


\begin{lemma}\label{lem: sym}
Let $f \in \C[t]$ and $L>0$. Denote the simplex $$\blacktriangle \equiv \blacktriangle_{n,L} := \left\{ \left(t_1, \ldots, t_n\right) \in \R^n:\, 0 \leq t_1<t_2< \ldots < t_n \leq L\right\}.$$ Let $F$ be the symmetric polynomial given by $F(t_1, \ldots, t_n) = \sym \big( f(t_1) \otimes \ldots \otimes f(t_n)\big)$. Then 
\begin{align*}
    \int_{\blacktriangle} F\left(t_1, \ldots, t_n\right)\,\dd t_1 \ldots\,\dd t_n = \frac{1}{n!} \left[\int_0^L f(t)\,\dd t\right]^{\otimes n}.
\end{align*}
\end{lemma}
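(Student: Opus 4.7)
The plan is to exploit symmetry to reduce the integral over the ordered simplex $\blacktriangle$ to one over the full cube $[0,L]^n$, and then factor the cube integral as a tensor power.

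First, I would observe that $[0,L]^n$ decomposes (up to a measure-zero set of diagonals) into $n!$ open simplices
\begin{align*}
\blacktriangle_\sigma := \left\{(t_1,\ldots,t_n) \in [0,L]^n :\, t_{\sigma(1)} < t_{\sigma(2)} < \cdots < t_{\sigma(n)}\right\},
\end{align*}
indexed by permutations $\sigma$ of $\{1,\ldots,n\}$. Since $F$ is invariant under permutations of its arguments and Lebesgue measure is also permutation-invariant, each of these $n!$ simplices contributes the same integral. Hence
\begin{align*}
\int_{\blacktriangle} F(t_1,\ldots,t_n)\,\dd t_1\cdots\dd t_n = \frac{1}{n!}\int_{[0,L]^n} F(t_1,\ldots,t_n)\,\dd t_1\cdots\dd t_n.
\end{align*}

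Next, I would pull the symmetrisation operator $\sym$ outside the integral (it is a finite linear combination of permutations composed with evaluation, hence commutes with integration), and then use Fubini to factor the integrand:
\begin{align*}
\int_{[0,L]^n} \sym\bigl(f(t_1)\otimes\cdots\otimes f(t_n)\bigr)\,\dd t_1\cdots\dd t_n &= \sym\!\left(\int_{[0,L]^n} f(t_1)\otimes\cdots\otimes f(t_n)\,\dd t_1\cdots\dd t_n\right)\\
&= \sym\!\left(\left[\int_0^L f(t)\,\dd t\right]^{\otimes n}\right).
\end{align*}

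Finally, I would note that any pure tensor power $v^{\otimes n}$ is already a symmetric tensor, so $\sym(v^{\otimes n}) = v^{\otimes n}$. Applying this with $v = \int_0^L f(t)\,\dd t$ yields the desired identity. The main (and only minor) subtlety is justifying the first step: one must check that $F$ is well-defined on the closure of $\blacktriangle$ despite $\sym$ being defined on pure tensors, which is immediate because $\sym$ extends by linearity and the diagonal $\{t_i=t_j\}$ has measure zero. Everything else is routine Fubini and linearity.
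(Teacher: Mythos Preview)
Your proof is correct and follows essentially the same approach as the paper: decompose $[0,L]^n$ into $n!$ permuted simplices, use the symmetry of $F$ to see that each contributes equally, and then factor the cube integral via Fubini. The paper is slightly more terse (it omits the explicit observation that $\sym(v^{\otimes n}) = v^{\otimes n}$), but the argument is the same.
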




\subsection{First two levels of the expected signature  $\Phi^\mm$ in the massless limit}\label{subsec: Phi 1--4} 


\subsubsection{$\Phi_1^\mm$}\label{subsubsec, Phi1}

Recall $\Phi_0 = (1,0,0,\ldots)$. The source term in Eq.~\eqref{PDE for Phi, later} is thus $\fs_1^\mm(t,p) = -\mom p$. Hence, by the Feynman--Kac formula~\eqref{feynmann-kac, later}, one has
\begin{align}\label{Phi 1 computation}
\Phi_1^\mm(t,p) = -\mom \int_0^t e^{-\mom s}p\,\dd s =\left(e^{-\mom t}-\id\right)p.
\end{align}

\subsubsection{$\Phi_2^\mm$} \label{subsubsec, Phi2}
Recall that for some $0<\lambda \leq \Lambda <\infty$ and $0<K<\infty$, one has $\|\scrm\|\leq \Lambda$ and  $\max\left\{\left\|e^{-\scrm^*\delta}\right\|,\,\left\|e^{-\scrm\delta}\right\|\right\} \leq K e^{-\lambda\delta}$ for each $\delta>0$. The existence of $\lambda$ follows from the positive definiteness of $\scrm$; see Lemma~\ref{lem: matrix expoential limit}. By Eqs.~\eqref{PDE for Phi, later} $\&$ \eqref{feynmann-kac, later} (see \S\ref{subsec: Phi n polyl in p} for notations) we have
\begin{align*}
    \Phi_2^\mm(t,p) =
    \Phi_2^{\mm,\,{\rm high}}(t,p) + \Phi_2^{\mm,\,{\rm low}}(t,p):= \E^p\left[ \int_0^t \fs^{\rm high}_2(t-s, P_s)\,\dd s \right] + \E^p\left[ \int_0^t \fs^{\rm low}_2(t-s, P_s)\,\dd s \right].
\end{align*} 
For the degree-2 (in $p$) term, we use Fubini's theorem to take the expectation $\E^p$ under the  integrals with respect to time parameters, thus obtaining
\begin{align*}
  \Phi_2^{\mm,\,{\rm high}}(t,p) &= \int_0^t \int_0^{t-s} \left(-\mom\right) \otimes \left(-\mom e^{-\mom r}\right)\E^p\left[P_s \otimes P_s\right]\,\dd r\,\dd s.
\end{align*}
Recall from Lemma~\ref{lem:GaussianMoment} that $\E^p\left[P_s \otimes P_s\right] = \boldsymbol{\mu}_s^{\otimes 2} + \boldsymbol{\Sigma}_s$  (see Lemma~\ref{lem: mean and cov} for $\boldsymbol{\mu}_s^{\otimes 2}$, ${\bf \Sigma}_s$). Then
\begin{align*}
\Phi_2^{\mm,\,{\rm high}}(t,p) &= \one_1(t,p) + \one_2(t) := \int_0^t \int_0^{t-s} \left(-\mom\right) \otimes \left(-\mom e^{-\mom r}\right) \cdot \left(e^{-\mom s}p \otimes e^{-\mom s}p\right)\,\dd r\,\dd s\\
&\quad + \int_0^t \int_0^{t-s}\left(-\mom\right) \otimes \left(-\mom e^{-\mom r}\right) \cdot \left\{\int_0^s e^{-\mom \varsigma} e^{-\frac{\scrm^*}{m}\varsigma}\,\dd \varsigma
\right\}\,\dd r\,\dd s.
\end{align*}

For $\one_1$, consider the change of variables $t_1 := \frac{s}{m}$, $t_2 := \frac{r+s}{m}$, which leads to 
\begin{align*}
\one_1(t,p) &= \iint_{0 \leq t_1 < t_2 \leq \frac{t}{m}} \left\{ \left(-\scrm e^{-\scrm t_1}p\right) \otimes \left(-\scrm e^{-\scrm t_2}p\right)
    \right\}\,\dd t_1 \,\dd t_2=: \fraka_2^\mm(t,p).
\end{align*}

For $\one_2$, we compute by the mixed product property for tensor products (see \S\ref{Tensor product of linear mappings} for  notations), a change of variables $\sigma := \frac{s}{m}$, and Fubini's theorem that 
\begin{align*}
\one_2(t) &= \int_0^t  \left\{(-\scrm) \otimes\int_0^{t-s} \left( -\mom e^{-\mom r}    \right)\,\dd r\right\} \left\{ \int_0^{s/m} e^{-\scrm\varsigma} e^{-\scrm^* \varsigma}\,\dd \varsigma\right\}\,\dd s\\
& = m \int_0^{\frac{t}{m}} \left(-\scrm\right) \otimes \left(e^{-\scrm \left(\frac{t}{m}- \sigma\right)}-\id\right) \cdot \ssig_\sigma \,\dd\sigma\\
& = t \dashint_0^{\frac{t}{m}} \left(\scrm \otimes \id\right) \cdot \ssig_\sigma\,\dd \sigma  - m \int_0^{\frac{t}{m}} \left\{\scrm \otimes e^{-\scrm \left(\frac{t}{m} - \sigma \right)}\right\}\cdot\ssig_\sigma\,\dd \sigma  =: \one_{2, 1}^\mm (t) + \one_{2,2}^\mm (t).
\end{align*}
Observe the naive bound:
\begin{align}\label{Sigma estimate, simple}
\left\|\ssig_T\right\| = \left\|\int_0^T e^{-\scrm t}e^{-\scrm^* t}\,\dd t\right\| \leq \frac{K^2d(1 - e^{-2\lambda T})}{2 \lambda} \leq \frac{K^2d}{2\lambda} \quad \text{ for any } T>0.
\end{align}
Thus $\one_{2,2}^\mm(t) \lesssim \mathcal{O}(m)$ in the small mass limit:
\begin{align*}
\left\|\one_{2,2}^\mm(t)\right\| &\leq m \int_0^{\frac{t}{m}} \left\| \scrm \otimes e^{-\scrm \sigma'} 
 \cdot \ssig_{\frac{t}{m}- \sigma'}\right\|\,\dd \sigma'\leq \frac{m \Lambda K^3d^3}{2\lambda} \int_0^{\frac{t}{m}} e^{-\lambda\sigma'}\,\dd \sigma' \leq \frac{\Lambda K^3d^3}{2\lambda^2} \,m.
\end{align*}

Now let us turn to 
\begin{align*}
\Phi_2^{\mm,\,{\rm low}}(t,p) = \E^p\left[ \int_0^t \fs^{\rm low}_2(t-s, P_s)\,\dd s \right]= \sum_{j=1}^d \E^p\left[\int_0^t \left\{e_j \otimes \p_{p_j}\Phi_1^\mm(t-s, P_s) + \frac{1}{2}e_j \otimes e_j \right\}\,\dd s \right].
\end{align*}
In view of the computation for $\Phi_1$ in Eq.~\eqref{Phi 1 computation}, we note by a change of variables that $ \Phi_2^{\mm,\,{\rm low}}(t,p) = -\frac{\id}{2}t\,  + m \int_0^{\frac{t}{m}} e^{-\scrm \sigma }\,\dd \sigma$. Thus $\left\|\Phi_2^{\mm, {\rm low}} (t,p) + \frac{\id}{2}t \right\| \leq \frac{mK}{\lambda}$. 

Therefore, we may deduce from the above arguments 
\begin{align}\label{Phi 2, final bound}
\Phi_2^\mm(t,p) &= \fraka_2^\mm(t,p) +  \frakb_2^\mm(t) + \err_2^\mm(t), \qquad \frakb_2^\mm(t) := \left(\scrm \cdot \dashint_0^{\frac{t}{m}} \ssig_\sigma\,\dd \sigma - \frac{\id}{2}\right) t, 
\end{align}
with the error term $    \err_2^\mm(t) := m \int_0^{\frac{t}{m}} e^{-\scrm \sigma }\,\dd \sigma - m \int_0^{\frac{t}{m}} \left\{\scrm \otimes e^{-\scrm \left(\frac{t}{m} - \sigma \right)}\right\}\cdot\ssig_\sigma\,\dd \sigma$ satisfying
\begin{align}\label{Phi2, error bound}
\left\|\err_2^\mm(t)\right\| \leq m \left(\frac{K}{\lambda} + \frac{\Lambda K^3d^3}{2\lambda^2}\right).
\end{align}
This verifies Theorem~\ref{main theorem for singular limit} for the case $n=2$.

It is crucial to observe that 
both $\frakb_2^\mm(t)$ and $\err_2^\mm(t)$ are independent of $p$. Also,
\begin{align*}
\lim_{m \to 0^+}\frakb_2^\mm (t) = \left(\scrm \bra\ssig\ket - \frac{\id}{2}\right) t \quad\text{ in the $\ell^\infty$-tensor norm},
\end{align*}
where for tensor-valued  function $f$ one designates $    \bra{f}\ket:= \lim_{T \to \infty} \frac{1}{T} \int_0^T f(t)\,\dd t.$ The arguments for Corollary~\ref{cor, explicit limit} show that the long-time average $\bra\ssig\ket$ equals $\scrc \equiv \ssig_\infty = \int_0^\infty e^{-\scrm t}e^{-\scrm^*t}\,\dd t$.

Things get rapidly involved from $n=3$. Computations of $\Phi_3^\mm$ are presented in the (online) appendix, which contains  essential ingredients of the proof of Theorem~\ref{main theorem for singular limit} in \S\ref{subsec: limit, Dec24}.

\subsection{Small mass limit at arbitrary degree of tensors}\label{subsec: limit, Dec24}

\begin{proof}[Proof of Theorem~\ref{main theorem for singular limit}]  
{We induct on the signature level $n$. The induction hypothesis is that 
for any $n \geq 2$, the $n^{\text{th}}$-level expected signature can be expressed as
\begin{align}\label{eqn: esig_dec}
    \Phi_n^\mm(t,p) = \fraka_n^\mm(t,p) + \frakb_n^\mm(t,p) + \err_n^\mm(t,p),
\end{align}
where
\begin{eqnarray}\label{ind hypo, xx}
&&\fraka_n^\mm(t,p) := \idotsint\limits_{0\leq t_1<\dots<t_n\leq t} \,\,\bigotimes_{j=1}^n  \left( -\mom e^{-\mom t_j} p\right)\, \dd t_1\ldots\,\dd t_n,\\
&&\frakb_n^\mm(t,p) := \sum_{k=1}^{\lfloor {n}\slash{2}\rfloor} \left\{\fraka_{n-2k}^\mm(t,p) \otimes \Bigg[\left( \scrm\cdot\dashint_0^{{t}\slash{m}}\ssig_\sigma \,\dd \sigma-\frac{\id}{2}\right) 
t\Bigg]^{\otimes k}  \right\}, \\
&&\left|\err_n^\mm(p,t)\right| \leq C(K,\Lambda, \lambda^{-1}, n,d,t)\left(1+|p|^{n-2}\right)\cdot m. 
\end{eqnarray}
Here $\err_k^\mm(p,t)$ is a polynomial of $p$ with degree $\leq (n-2)$. In particular, $\left\|\err_n^\mm \equiv {\rm Error}_n^\mm\right\| \lesssim \mathcal{O}(m)$ when $m \to 0^+$, as asserted by the theorem. The cases $n=1, 2$ have already been established in \S\S\ref{subsubsec, Phi1}  $\&$ \ref{subsubsec, Phi2}. Now we assume for $k \in\{1,\ldots, n-1\}$ and argue for $k=n$.
}

For notational convenience, in this proof we write for each $t>0$ that
\begin{equation}\label{Xi, def}
    \Xi^\mm(t):= \scrm \cdot\dashint_0^{t/m} \ssig_\sigma\,\dd\sigma - \frac{\id}{2} \in \mathfrak{gl}(d;\R).
\end{equation}
Thus,\footnote{For any matrix $\mathfrak{M}\in \mathfrak{gl}(d;\R)$ we adhere to the convention $\mathfrak{M}^{\otimes 0} \equiv 1$. } 
 $\fraka_n^\mm (t,p)= \fraka_{n-2k}^\mm(t,p) \otimes \left[\Xi^\mm(t)\right]^{\otimes k}\bigg|_{k=0}.$ Also, by the estimate in Eq.~\eqref{Sigma estimate, simple},
\begin{align}\label{Xi m, uniform bd}
\sup_{T > 0}\left\|\Xi^\mm(T)\right\| \leq \frac{\Lambda K^2d^2}{2\lambda} + \frac{1}{2}.
\end{align}
Hence, by Lemma~\ref{lem: average difference} and Eq.~\eqref{Xi m, uniform bd}, 
\begin{align}\label{Xi estimate}
\left\|\Xi^\mm(t)-\Xi^\mm(t-m\sigma)\right\| \leq  \left(\frac{\Lambda K^2d^2}{2\lambda}+\frac{1}{2}\right)\frac{2m\sigma}{t} \leq \frac{2\Lambda K^2d^2}{\lambda}\cdot \frac{m\sigma}{t}.
\end{align}



In provision of the induction hypothesis (Eq.~\eqref{eqn: esig_dec}) on degree $n-1$ and $n-2$, the Feynman--Kac formula~\eqref{feynmann-kac, later}, and the expression~\eqref{PDE for Phi, later}, we may decompose
\begin{align}\label{nine terms}
    \Phi_n^\mm &= \sum_{j=1}^9\two_{n, j}^\mm,
\end{align}
with 
\begin{footnotesize}
\begin{eqnarray*}
    && \two_{n, 1}^\mm(t,p) = \E^p\left[-\int_0^t \mom P_s \otimes \fraka_{n-1}^\mm(t-s, P_s) \,\dd s\right],\two_{n, 2}^\mm(t,p) = \E^p\left[-\int_0^t \mom P_s \otimes \frakb_{n-1}^\mm(t-s, P_s) \,\dd s\right],\\
    &&\two_{n, 3}^\mm(t,p) = \E^p\left[-\int_0^t \mom P_s \otimes \err_{n-1}^\mm(t-s, P_s) \,\dd s\right],
    \two_{n, 4}^\mm(t,p) = \sum_{j=1}^d e_j \otimes   \E^p\left[\int_0^t \left(\p_{p_j}\fraka_{n-1}^\mm\right)(t-s, P_s) \,\dd s\right],\\
    &&\two_{n, 5}^\mm(t,p) = \sum_{j=1}^d e_j \otimes   \E^p\left[\int_0^t  \left(\p_{p_j}\frakb_{n-1}^\mm\right)(t-s, P_s) \,\dd s\right],\two_{n, 6}^\mm(t,p) = \sum_{j=1}^d e_j \otimes   \E^p\left[\int_0^t\left(\p_{p_j}\err_{n-1}^\mm\right)(t-s, P_s) \,\dd s\right],\\
    &&\two_{n, 7}^\mm(t,p) = \frac{1}{2}\sum_{j=1}^d e_j \otimes e_j \otimes   \E^p\left[\int_0^t \fraka_{n-2}^\mm(t-s, P_s) \,\dd s\right],\two_{n, 8}^\mm(t,p) = \frac{1}{2}\sum_{j=1}^d e_j \otimes e_j \otimes   \E^p\left[\int_0^t \frakb_{n-2}^\mm(t-s, P_s) \,\dd s\right],\\
    &&\two_{n, 9}^\mm(t,p) = \frac{1}{2}\sum_{j=1}^d e_j \otimes e_j \otimes   \E^p\left[\int_0^t \err_{n-2}^\mm(t-s,P_s) \,\dd s\right].
\end{eqnarray*}
\end{footnotesize}

The remaining parts of the proof are devoted to estimating these nine terms. For each term we shall further decompose 
${\rm II}_{n,j}^{(m)}(t,p) = {\rm II}_{n,j, {\rm good}}^\mm(t,p) + {\rm Error}_{n, j}^\mm(t,p), $
where ${\rm II}_{n,j, {\rm good}}^\mm(t,p)$ is a polynomial in $p$ of degree $\leq {n-2}$ and, for each $1\leq j \leq 9$,
\begin{align}\label{error, final}
    \left\| {\rm Error}_{n,j}^\mm(t,p)\right\| \leq C\left(K, \Lambda, \lambda^{-1}, n,d,t \right)\cdot \left(1+|p|^{n-2}\right)\cdot m.
\end{align}
We then show that $\Phi_{n, {\rm good}}^\mm(t,p) \equiv \sum_{j=1}^{9} {\rm II}_{n,j, {\rm good}}^\mm(t,p)$ takes the form 
\begin{align*}
&\Phi_{n, {\rm good}}^\mm(t,p)  = \fraka_n^\mm(t,p) + \frakb_{n}^\mm(t,p)  \qquad \text{if $n$ is odd},
\end{align*}
while 
\begin{footnotesize}
\begin{align*}
&\Phi_{n, {\rm good}}^\mm(t,p) = \fraka_n^\mm(t,p) +  \frakb_{n}^\mm(t,p) - \left[\Xi^\mm(t)\right]^{\otimes \frac{n}{2}} t^{\frac{n}{2}}\\
&\quad +\sum_{j=1}^d e_j \otimes \Bigg[ \int_0^t \left[{-}\id {+} e^{-\mom (t-s)}\right]^{j}_{\bullet}\otimes \left[\Xi^\mm(t-s)\right]^{\otimes \left(\frac{n}{2}-1\right)} (t-s)^{\frac{n}{2}-1}\,\dd s\Bigg] \\
&\quad + \frac{\id}{2}\otimes  \left[\int_0^t \left[\Xi^\mm(t-s)\right]^{\otimes  \left(\frac{n}{2}-1\right)}(t-s)^{\left(\frac{n}{2}-1\right)} \,\dd s\right] + \scrm \cdot \left(\dashint_{0}^{\frac{t}{m}} \ssig_\sigma \,\dd \sigma \right) \otimes \left[\Xi^\mm(t)\right]^{\otimes \left(\frac{n}{2}-1\right)} t^{\frac{n}{2}}   \quad \text{if $n$ is even}.
\end{align*}
    
\end{footnotesize}

In fact, nontrivial contributions to $\Phi_{n, {\rm good}}^\mm(t,p)$ arise only from $\two_{n,j}^\mm$ with $j \in \{1,2,5,8\}$. 

\subsubsection{Estimate for $\two_{n, 1}^\mm$}\label{subsubsec: II n,1} 

This is the most technical term among all ${\rm II}^\mm_{n,j}$. We present the complete details of it to avoid subsequent repetitions, as the arguments for the other (easier) terms all have similar ingredients. We show that $\two_{n, 1, good}^\mm = \fraka_n^\mm(t,p)$ and ${\rm Error}_{n, j}^\mm(t,p)= \ir_{n, {\rm REG}}^\mm(t,p) + \ir_{n, {\rm SING}}^\mm(t,p)$ is controlled by $C\left(K,\Lambda, \lambda^{-1},n,t\right) \left(1+|p|^{n-2}\right) \cdot m$ for $n > 2$.

First, from the expression for $\fraka^\mm_{n-1}$ and Fubini's theorem, we deduce that
\begin{footnotesize}
\begin{align*}
&\two_{n, 1}^\mm (t,p) =\E^p\Bigg[\int_0^t -\mom P_s \otimes \Bigg\{\,\,\,\idotsint\limits_{0\leq t_1<\cdots<t_{n-1}\leq t-s}\left(-\mom e^{-\mom t_1}P_s\right)\otimes  \cdots\otimes\left(-\mom e^{-\mom t_{n-1}}P_s\right)\,\dd t_1\dots  \dd t_{n-1}\Bigg\}\,\dd s \Bigg] \nonumber\\
&=\int_0^t\idotsint\limits_{0\leq t_1<\cdots<t_{n-1}\leq t-s} \left[\left(-\mom\right) \otimes \left(-\mom e^{-\mom t_1} \right) \otimes \cdots \otimes\left( -\mom e^{-\mom t_{n-1}} \right)\right] \cdot \left\{\E^p \left[P_s^{\otimes {n}}\right]\right\} \,\dd t_1\dots \dd t_{n-1} \,\dd s.
\end{align*}
\end{footnotesize}
A change of variables $\tau_j := t_j+s$ for $j=1,2,\ldots,n$ and $\tau_0 := s$ yields\footnote{For $d\times d$ matrices $A_1,\ldots, A_n$ and $B \in  \left(\R^d\right)^{\otimes n}$, $\left(A_1 \otimes \ldots \otimes A_n\right)\cdot B$ is a $d\times n$ matrix --- $
\left[\left(\bigotimes_{j=1}^n A_j\right) \cdot B \right]^{i_1, \ldots i_n} = \sum_{j_1=1}^{d} \cdots \sum_{j_n=1}^{d}
\left(A_1\right)^{i_1}_{j_1}\left(A_2\right)^{i_2}_{j_2} \cdots \left(A_n\right)^{i_n}_{j_n} B^{j_1, \ldots, j_n}$ for any $i_1, \ldots, i_n \in \left\{1,\ldots,d\right\}$. See \S\ref{Tensor product of linear mappings}.}
\begin{align}\label{new, x}
\two_{n, 1}^\mm (t,p) &=\idotsint\limits_{0\leq\tau_0 <\cdots<\tau_{n-1}\leq t} \Bigg[\left(-\mom\right) \otimes \left(-\mom e^{-\mom (\tau_1-\tau_0)} \right) \otimes\cdots  \nonumber\\
&\qquad \otimes \left(-\mom e^{-\mom (\tau_{n-1}-\tau_0)} \right)\Bigg]\cdot \E^p \left[\left(P_{\tau_0}\right)^{\otimes {n}}\right] \,\dd \tau_0 \cdots \dd \tau_{n-1}.
\end{align}

By virtue of Lemma~\ref{lem:GaussianMoment},
\begin{align}\label{new, P tens n }
    \E^p \left[\left(P_{\tau_0}\right)^{\otimes {n}}\right] = \left( e^{-\frac{M}{m}\tau_0} p \right)^{\otimes n} + \mathcal{R}^\mm_{n}(p,\tau_0).
\end{align}
The remainder consists of mixed terms of mean and covariance:
\begin{align*}
\mathcal{R}_{n}^\mm(\tau_0,p) &= \sum_{k=1}^{\lfloor n\slash 2 \rfloor} \frac{(2k)!}{k! 2^k} \sym \left\{ \left(e^{-\mom\tau_0}p\right)^{\otimes (n-2k)} \otimes \left(\int_0^{\tau_0} e^{-\mom s}e^{-\frac{\scrm^*}{m}s} \,\dd s\right)^{\otimes {k}} \right\}.
\end{align*}
The key point here is that the summation starts from $k=1$. As the covariance matrix does not depend on $p$, we observe that $\deg_p \mathcal{R}_{n}^\mm \leq n-2$. 

Now,  Eqs.~\eqref{new, x} $\&$ \eqref{new, P tens n } yield that
\begin{align}\label{II 1, decomp}
&\two_{n,1}^\mm(t,p) =\idotsint\limits_{0\leq\tau_0 <\cdots<\tau_{n-1}\leq t} \bigg[\left(-\mom e^{-\mom\tau_0}\right) \otimes \left(-\mom e^{-\mom \tau_1 } \right) \otimes \cdots \nonumber\\
& \otimes \left( -\mom e^{-\mom \tau_{n-1}} \right)\bigg] \cdot\left ( p^{\otimes n}\right) \,\dd \tau_0\ldots \dd \tau_{n-1} + \ir_{n}^\mm(t,p)   =: \fraka_n^\mm(t,p) + \ir_{n}^\mm(t,p),
\end{align} 
where the remainder $\ir_{n}^\mm$ is an $n$-fold iterated integral of $\mathcal{R}_{n}^\mm$ in Eq.~\eqref{new, P tens n }:
\begin{align}\label{IR def}
\ir_{n}^\mm (t,p) &:= \idotsint\limits_{0\leq\tau_0 <\cdots<\tau_{n-1}\leq t}  \bigg[\left(-\mom\right) \otimes \left(-\mom e^{-\mom (\tau_1-\tau_0)} \right) \otimes \nonumber \\
&\qquad \cdots\otimes \left( -\mom  e^{-\mom (\tau_{n-1}-\tau_0)} \right)\bigg] \cdot  \mathcal{R}^\mm_{n}(p,\tau_0) \,\dd \tau_0\dots \dd \tau_{n-1}. 
\end{align}

To proceed, we further split the source term $\mathcal{R}_{n}^\mm$ into
\begin{align*}
&\mathcal{R}_{n}^\mm(\tau_0,p) = \mathcal{R}_{n, {\rm REG}}^\mm(\tau_0,p) 
 + \mathcal{R}_{n,{\rm SING}}^\mm(\tau_0,p)\\
&\quad:=\sum_{1 \leq k < \lfloor n\slash 2 \rfloor} \frac{(2k)!}{k! 2^k} \sym \left\{ \left(e^{-\mom\tau_0}p\right)^{\otimes (n-2k)} \otimes \left(\int_0^{\tau_0} e^{-\mom s}e^{-\frac{\scrm^*}{m}s} \,\dd s\right)^{\otimes {k}} \right\} \\ 
&\qquad + \sum_k \delta_{k, \frac{n}{2}} \frac{(2k)!}{k! 2^k} \sym \left\{ \left(e^{-\mom\tau_0}p\right)^{\otimes (n-2k)} \otimes \left(\int_0^{\tau_0} e^{-\mom s}e^{-\frac{\scrm^*}{m}s} \,\dd s\right)^{\otimes {k}} \right\}.
\end{align*}
Note that $\mathcal{R}_{n,{\rm SING}}^\mm(p,\tau_0) \equiv 0$ unless $n$ is even. As per Eq.~\eqref{IR def}, we set for $\bullet \in \left\{{\rm REG},\,{\rm SING}\right\}$:
\begin{footnotesize}
\begin{align*}    \ir_{n,\bullet}^\mm(t,p) &:= \idotsint\limits_{0\leq\tau_0 <\cdots<\tau_{n-1}\leq t}  \bigg[\left(-\mom\right) \otimes \left(-\mom e^{-\mom (\tau_1-\tau_0)} \right) \otimes \cdots\otimes \left( -\mom  e^{-\mom (\tau_{n-1}-\tau_0)} \right)\bigg] \cdot  \mathcal{R}^\mm_{n,\bullet}(p,\tau_0) \,\dd \tau_0\dots \dd \tau_{n-1}. 
\end{align*}
\end{footnotesize}

\noindent
\underline{Estimate for $\ir^\mm_{n,{\rm REG}}$.}  Setting $t_j := \tau_j \slash m$ for $j \in \{0,1,\ldots, \tau_{n-1}\}$, we get
\begin{align*}
\ir^\mm_{n,{\rm REG}} &= \idotsint\limits_{0\leq t_0 <\cdots<t_{n-1}\leq \frac{t}{m}} \left[ \bigotimes_{j=0}^{n-1} \left(-\scrm e^{-\scrm (t_j-t_0)} \right) \right]  \cdot \mathcal{R}^\mm_{n,{\rm REG}}(mt_0,p)\,\dd 
 t_0\ldots\,\dd t_{n-1}.
\end{align*}
Singling out the integral over $t_0$ and considering $\delta_j := t_j-t_0$, we  deduce that
\begin{align*}
\ir^\mm_{n,{\rm REG}} &= \int_0^{\frac{t}{m}}\overbrace{\left\{\,\,\,\idotsint\limits_{0\leq \delta_1 <\cdots<\delta_{n-1}\leq \frac{t}{m}-t_0} \left[ \bigotimes_{j=0}^{n-1} \left(-\scrm e^{-\scrm \delta_j} \right) \right]  \,\dd\delta_1\ldots\,\dd\delta_{n-1}\right\}}^{=:\,\mathcal{J}_{n-1}^\mm(t_0)} \cdot \mathcal{R}^\mm_{n,{\rm REG}}(mt_0, p) \,\dd t_0.
\end{align*}

Now, notice that 
\begin{align*}
\left\|\mathcal{J}_{n-1}^\mm(t_0)\right\| &\leq \idotsint\limits_{0\leq \delta_1 <\cdots<\delta_{n-1}\leq \frac{t}{m}-t_0} \left\|\scrm\right\| \left\| \bigotimes_{j=1}^{n-1} \left(-\scrm e^{-\scrm \delta_j} \right) \right\|\,\dd\delta_1\ldots\,\dd\delta_{n-1}\\
&\leq \Lambda^{n}K^{n-1} \left\{\,\,\,\idotsint\limits_{0\leq \delta_1 <\cdots<\delta_{n-1}\leq \frac{t}{m}-t_0} \left(\prod_{j=1}^{n-1} e^{-\lambda\delta_j}\right)\,\dd\delta_1\ldots\,\dd\delta_{n-1}\right\}\\
&= \frac{\Lambda^{n}K^{n-1} e^{-\lambda}}{(n-1)! \lambda^{n-1}} \left[ 1- e^{-\lambda\left(\frac{t}{m} - t_0\right)} \right]^{n-1}\leq \frac{\Lambda^{n}K^{n-1}e^{-\lambda}}{(n-1)! \lambda^{n-1}},
\end{align*}
which is uniform in $m$ and $t_0$. The second line follows from the assumptions that $\|\scrm\|\leq \Lambda$ and $\left\|e^{-\scrm \delta}\right\| \leq K e^{-\lambda\delta}$, and the third line from Lemma~\ref{lem: sym}.

On the other hand, for $\mathcal{R}_{n, {\rm REG}}^\mm$, by a change of variables we have
\begin{align*}
\mathcal{R}_{n, {\rm REG}}^\mm(\tau_0,p) 
=\sum_{1 \leq k < \lfloor n\slash 2 \rfloor} \frac{(2k)!}{k! 2^k} \left( m^k\right) \sym \left\{ \left(e^{-\mom\tau_0}p\right)^{\otimes (n-2k)} \otimes  {\ssig_{\tau_0 \slash m}}^{\otimes {k}} \right\}, 
\end{align*}
where $\ssig_t := \int_0^t e^{-\scrm s}e^{-\scrm^* s}\,\dd s$. The previous estimate for $\left\|\mathcal{J}_{n-1}^\mm(t_0)\right\|$ yields
\begin{align}\label{IR REG bound, final}
&\left\|\ir^\mm_{n,{\rm REG}}(t,p)\right\| \leq \int_0^{t \slash m}d^n \left\|\mathcal{J}^\mm_{n-1}(t_0) \right\|\left\|\mathcal{R}^\mm_{n,{\rm REG}}(mt_0,p)\right\|\,\dd t_0\nonumber\\
&\qquad \leq \frac{\Lambda^n K^{n-1}d^n e^{-\lambda}}{(n-1)! \lambda^{n-1}} \int_0^{t \slash m}\left\{\sum_{1 \leq k < \lfloor n\slash 2 \rfloor}  \frac{(2k)!}{k! 2^k} m^k \left\|{\bf \Sigma}_{t_0}\right\|^k  \|e^{-\frac{\scrm}{m}t_0}p\|^{n-2k} \right\}\,\dd t_0\nonumber\\
&\qquad \leq  \frac{\Lambda^nK^{n-1}d^n e^{-\lambda}}{(n-1)! \lambda^{n-1}} \sum_{1 \leq k < \lfloor n\slash 2 \rfloor} \Bigg\{ \frac{(2k)!}{k! 2^k} m^k \left(\frac{K^2d}{2\lambda}\right)^k\times\int_0^{t \slash m}\left(Kd|p|e^{-\lambda t_0}\right)^{n-2k} \,\dd t_0 \Bigg\}\nonumber\\
&\qquad \leq  \frac{\Lambda^nK^{2n} d^{2n} e^{-\lambda}}{(n-1)! \lambda^{n}} \sum_{1 \leq k < \lfloor n\slash 2 \rfloor} \frac{(2k)!|p|^{n-2k}}{k! 2^{2k} \lambda^k(n-2k)}\cdot m^k.
\end{align}
In the above, the third line holds by $\left\|{\bf \Sigma}_{t_0}\right\| \leq \int_0^{t_0}K^2d e^{-2\lambda s}\,\dd s\leq\frac{K^2d}{2\lambda} $ and $\|e^{-\frac{\scrm}{m}t_0}p\|\leq Kd|p|e^{-\lambda t_0}$, thanks to the assumption that $\max\left\{\left\|e^{-\scrm^*\delta}\right\|,\,\left\|e^{-\scrm\delta}\right\|\right\} \leq K e^{-\lambda\delta}$ for each $\delta>0$. Eq.~\eqref{IR REG bound, final} implies that $\left\|\ir^\mm_{n,{\rm REG}}(t,p)\right\| \lesssim \mathcal{O}(m)$ in the massless limit.

\smallskip
\noindent
\underline{Estimate for $\ir^\mm_{n,{\rm SING}}$.}  This term is nonzero only when $k=n / 2$ for $n$ even. In this case, 
\begin{align*}
\mathcal{R}_{n,{\rm SING}}^\mm(\tau_0,p) = \frac{n!}{\left(\frac{n}{2}\right)! \sqrt{2}^n} \left(\int_0^{\tau_0} e^{-\mom s}e^{-\frac{\scrm^*}{m}s} \,\dd s\right)^{\otimes {n \slash 2}}= \left(\sqrt{m}\right)^n \frac{n!}{\left(\frac{n}{2}\right)! \sqrt{2}^n} \left({\bf \Sigma}_{\tau_0\slash m}\right)^{\otimes (n\slash 2)}.
\end{align*}
Using the same estimates for $\left\|\mathcal{J}_{n-1}^\mm(t_0)\right\|$, $\left\|{\bf \Sigma}_{t_0}\right\|$ in the bound for $\ir^\mm_{n,{\rm REG}}$, we deduce that
\begin{footnotesize}
\begin{align}\label{IR Sing estimate, final}
&\left\|\ir^\mm_{n,{\rm SING}}(t,p)\right\| \leq \frac{\Lambda^nK^{n-1}d^n e^{-\lambda}}{(n-1)! \lambda^{n-1}} \int_0^{t/m} \left(\sqrt{m}\right)^n \frac{n!}{\left(\frac{n}{2}\right)! \sqrt{2}^n} \left\|{\bf \Sigma}_{t_0}^{\otimes (n\slash 2)}\right\| \,\dd t_0 \nonumber \\
&\leq \frac{\Lambda^nK^{n-1}d^n e^{-\lambda}}{(n-1)! \lambda^{n-1}} \cdot \left(\sqrt{m}\right)^n \frac{n!}{\left(\frac{n}{2}\right)! \sqrt{2}^n}  \int_0^{t/m}  \left(\frac{K^2d}{2\lambda}
\right)^{n/2} \,\dd t_0 
\leq \left(\frac{\Lambda}{2}\right)^n \frac{nK^{2n-1}d^{\frac{3n}{2}}e^{-\lambda}}{\lambda^{\frac{3n}{2}-1} \left(\frac{n}{2}\right)!} t \cdot m^{\frac{n}{2}-1}.
\end{align}
\end{footnotesize}
In particular, $\ir^\mm_{n,{\rm SING}}(t,p)$ vanishes in the massless limit whenever $n > 2$.

\smallskip

\noindent
\underline{Error bound.} Recall the decomposition $\two_{n,1}^\mm(t,p) = \fraka_n^\mm(t,p) + \ir_{n, {\rm REG}}^\mm(t,p) + \ir_{n, {\rm SING}}^\mm(t,p)$ in Eq.~\eqref{II 1, decomp}. From Eqs.~\eqref{IR REG bound, final} and \eqref{IR Sing estimate, final}, we deduce that 
\begin{align}\label{bound for II, 1}
&\left\|\two_{n,1}^\mm(t,p) - \fraka_n^\mm(t,p)\right\|
\leq \frac{\Lambda^nK^{2n} d^{2n} e^{-\lambda}}{(n-1)! \lambda^{n}} \sum_{1 \leq k < \lfloor n\slash 2 \rfloor} \frac{(2k)!|p|^{n-2k}}{k! 2^{2k} \lambda^k(n-2k)}\cdot m^k\nonumber\\
&\qquad\qquad\qquad\qquad\qquad\qquad + \left(\frac{\Lambda}{2}\right)^n \frac{nK^{2n-1}d^{\frac{3n}{2}}e^{-\lambda}}{\lambda^{\frac{3n}{2}-1} \left(\frac{n}{2}\right)!} t \cdot m^{\frac{n}{2}-1} {\bf 1}_{\{\text{$n$ is even}\}}\nonumber\\
&\qquad\qquad\leq C\left(K,\Lambda, \lambda^{-1},n,t\right) \left(1+|p|^{n-2}\right) \cdot m\quad \text{whenever } n \geq 3.
\end{align}

This concludes the estimates for $\two_{n, 1}^\mm$.

\subsubsection{Estimate for $\two_{n, 3}^\mm$, $\two_{n, 6}^\mm$, $\two_{n, 9}^\mm$} We check that these terms are all  $\lesssim \mathcal{O}(m)$ as $m \to 0^+$. 

\noindent
\underline{$\two_{n, 9}^\mm$.} This is easy --- recall that $
 \two_{n, 9}^\mm(t,p) = \frac{1}{2}\sum_{j=1}^d e_j \otimes e_j \otimes   \E^p\left[\int_0^t \err_{n-2}^\mm(t-s, P_s) \,\dd s\right]$. By induction, $\left\|\two_{n, 9}^\mm(t,p)\right\| \leq C\left(K,\Lambda, \lambda^{-1}, n,d,t\right) m \left\{\sup_{0 \leq \ell \leq n-4} \int_0^t \left\|\E^p \left[P_s^{\otimes \ell}\right] \right\|\,\dd s\right\}.$
Then it follows from Lemma~\ref{lem: pure expectation is O(m)} (see Appendix~\ref{sec: appendix, lemma} for a proof) that
\begin{align}\label{II 9}
    \left\|\two_{n, 9}^\mm(t,p)\right\| &\leq C\left(K,\Lambda, \lambda^{-1}, n,d,t\right)\left(1+|p|^{\max\{n-4,\,0\}}\right) m^2.
\end{align}

\begin{lemma}\label{lem: pure expectation is O(m)}
For each $\ell \in \{1,\ldots,n\}$ we have $
\int_0^t \left\|\E^p \left[P_s^{\otimes \ell}\right] \right\|\,\dd s \leq C\left(K,\Lambda, \lambda^{-1}, \ell\right)\left(1+|p|^\ell\right) m$. 
\end{lemma}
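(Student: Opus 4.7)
The plan is to apply Lemma~\ref{lem:GaussianMoment} to the Gaussian vector $P_s\sim \mathcal{N}(\boldsymbol{\mu}_s,\boldsymbol{\Sigma}_s)$, whose parameters are $\boldsymbol{\mu}_s = e^{-\mom s}p$ and $\boldsymbol{\Sigma}_s = \int_0^s e^{-\mom r}e^{-\frac{\scrm^*}{m}r}\,\dd r$ by Lemma~\ref{lem: mean and cov}, and then to bound each term of the resulting moment expansion explicitly. The hypothesis $\max\{\|e^{-\scrm\delta}\|, \|e^{-\scrm^*\delta}\|\} \leq K e^{-\lambda\delta}$ together with Convention~\ref{convention} immediately yields $\|\boldsymbol{\mu}_s\| \leq Kd|p| e^{-\lambda s/m}$. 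A change of variables $r = m\varsigma$ in the integral defining $\boldsymbol{\Sigma}_s$ delivers $\|\boldsymbol{\Sigma}_s\| \leq \frac{K^2 dm}{2\lambda}$; the crucial observation is that a factor of $m$ emerges naturally from this substitution, mirroring what already happened in the estimates for $\two_{n,1}^\mm$ above.

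Next I would invoke Lemma~\ref{lem:GaussianMoment} to expand
\begin{align*}
\E^p\!\left[P_s^{\otimes \ell}\right] = \sum_{k=0}^{\lfloor \ell/2\rfloor} \binom{\ell}{2k} \frac{(2k)!}{k!\,2^k}\,\sym\!\left(\boldsymbol{\mu}_s^{\otimes(\ell-2k)} \otimes \boldsymbol{\Sigma}_s^{\otimes k}\right).
\end{align*}
Using sub-multiplicativity of the $\ell^\infty$ tensor norm together with the trivial inequality $\|\sym(T)\|\leq \|T\|$, the $k$-th summand is controlled by a purely combinatorial constant times $(Kd|p|)^{\ell-2k} e^{-(\ell-2k)\lambda s/m} \bigl(\frac{K^2dm}{2\lambda}\bigr)^k$. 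Integrating in $s$ over $[0,t]$ splits into two cases: when $k < \ell/2$, the $s$-integral of the decaying exponential contributes an extra factor of $O(m)$, yielding a term of size $\lesssim |p|^{\ell-2k} m^{k+1}$; when $k = \ell/2$ (possible only for even $\ell$), the exponential is trivial and the $s$-integral contributes $t$, but the prefactor $m^{\ell/2}$ is itself $O(m)$ whenever $\ell \geq 2$.

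I anticipate no significant obstacle in executing this plan: the argument is essentially a bookkeeping exercise, with the factor $m$ produced in every summand from one of two independent sources --- either the rapidly decaying exponential $e^{-c s/m}$ when $\ell - 2k > 0$, or a copy of $\boldsymbol{\Sigma}_s$ when $k \geq 1$. Summing over $k$, applying the elementary inequality $|p|^{\ell-2k} \leq 1+|p|^\ell$ for $0\leq k\leq \ell/2$, and absorbing all $\ell$-, $d$-, $t$-, $K$-, and $\lambda^{-1}$-dependent constants into the final $C$ delivers the claimed estimate. The degenerate case $\ell = 1$ (for which only $k=0$ contributes) is covered by the first case above, while $\ell = 2$ is borderline but still works since the $k=1=\ell/2$ term contributes exactly $tK^2dm/(2\lambda) = O(m)$.
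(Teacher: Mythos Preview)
Your proposal is correct and follows essentially the same approach as the paper: both expand $\E^p[P_s^{\otimes\ell}]$ via the Gaussian moment formula (Lemma~\ref{lem:GaussianMoment}), bound $\|\boldsymbol{\mu}_s\|$ by an exponentially decaying term and $\|\boldsymbol{\Sigma}_s\|$ by $O(m)$, and then integrate term by term. The only cosmetic difference is that the paper first substitutes $\sigma=s/m$ (pulling out a global factor of $m$) whereas you integrate directly in $s$; your observation that the constant also depends on $t$ and $d$ is accurate and in fact more careful than the lemma's stated dependencies, which are used with $t,d$-dependence in the applications anyway.
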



\noindent
\underline{$\two_{n, 3}^\mm$ and $\two_{n, 6}^\mm$.} 
Estimates for these terms
are similar. We only sketch the main steps.

For $\two_{n, 6}^\mm$, the crucial observation is that  $\na_p \err_{n-1}^\mm(t,p)$ is a polynomial of degree $\leq (n-4)$ in $p$, since $\err_{n-1}^\mm(t,p)$ is a polynomial of degree $\leq (n-3)$ in $p$. Moreover, in view of the inductive hypothesis~\eqref{ind hypo, xx}, the coefficients of $\na_p \err_{n-1}^\mm(t,p)$ are also of order $\mathcal{O}(m)$, modulo constants depending only on $K$, $\Lambda$, $\lambda^{-1}$, $n$, $d$, and $t$. The degree-1 (in terms of $p$) terms in $\err_{n-1}^\mm$ --- hence the constant terms in $\p_{p_j}\err_{n-1}^\mm$ --- contribute to $\mathcal{O}(m)$ in $\two_{n, 6}^\mm(t,p)$. On the other hand, the terms in $\err_{n-1}^\mm$ of $\deg_p \geq 2$, by virtue of Lemma~\ref{lem: pure expectation is O(m)}, will lead to $\mathcal{O}\left(m^2\right)$ terms in $\two_{n, 6}^\mm(t,p)$, with constants depending on the same parameters. Therefore, we obtain that
\begin{align}\label{II 6}
\left\|\two_{n, 6}^\mm(t,p)\right\| &\leq C\left(K,\Lambda, \lambda^{-1}, n,d,t\right)\left(1+|p|^{\max\{n-4,\,0\}}\right) m.
\end{align}

For $\two_{n, 3}^\mm$, we again use the inductive hypothesis~\eqref{ind hypo, xx} to find that $P_s \otimes \err_{n-1}^\mm(t-s, P_s)$ is a polynomial of degree $\leq (n-2)$ in $P_s$, and that the coefficients in this tensor are of order $\mathcal{O}(m)$ modulo constants depending only on $K$, $\Lambda$, $\lambda^{-1}$, $n$, $d$, and $t$. Then, by Lemma~\ref{lem: pure expectation is O(m)} we have
\begin{align*}
\int_0^t \left\|\E^p\left[P_s \otimes \err_{n-1}^\mm(t-s, P_s)\right]\right\| \,\dd s\leq C\left(K,\Lambda, \lambda^{-1}, n,d,t\right)\left(1+|p|^{n-2}\right) m^2.
\end{align*}
Note that there is another factor $m^{-1}$ in $\two_{n, 3}^\mm$. Hence,
\begin{align}\label{II 3}
\left\|\two_{n, 3}^\mm(t,p)\right\|\leq C\left(K,\Lambda, \lambda^{-1}, n,d,t\right)\left(1+|p|^{n-2}\right) m.
\end{align}

\subsubsection{Estimate for $\two_{n, 4}^\mm$ and $\two_{n, 7}^\mm$} 
It is crucial that we have assumed $n\geq 3$ throughout this proof. 
Recall once again the notation $
\fraka_\ell^\mm(t,p) := \idotsint\limits_{0\leq t_1<\dots<t_\ell\leq t} \,\,\bigotimes_{j=1}^\ell  \left( -\mom e^{-\mom t_j} p\right)\, \dd t_1\ldots\,\dd t_\ell$.

For $\two^\mm_{n,7}$ we have the following lemma (see Appendix~\ref{sec: appendix, lemma} for a proof), which implies
\begin{align}\label{II n,7 estimate}
\left\|\two^\mm_{n,7}(t,p)\right\| \leq C\left(K,\Lambda, \lambda^{-1},n,d\right) \left(1+|p|^{n-2}\right) m.
\end{align}

\begin{lemma}\label{lem for II 7}
For any $\ell \geq 1$, it holds that
\begin{align*}
\E^p\left[\int_0^t \left\|\fraka_\ell^\mm(t-s,P_s)\right\| \,\dd s \right]  \leq C\left(K,\Lambda,\lambda^{-1},\ell,d\right)\left(1+|p|^\ell\right)\cdot m.
\end{align*}
\end{lemma}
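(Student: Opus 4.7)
The plan is to reduce the inequality to a scalar estimate of the form $\int_0^t \E^p[|P_s|^\ell]\,\dd s \le C(1+|p|^\ell)\,m$ via a uniform tensor bound on the integrand $\fraka_\ell^\mm(t-s,P_s)$. To that end, I would first rescale time by setting $u_j := t_j/m$ in the definition of $\fraka_\ell^\mm$, which absorbs the singular prefactors and rewrites the integrand as $\bigotimes_{j=1}^\ell (-\scrm e^{-\scrm u_j})\cdot P_s^{\otimes \ell}$ integrated over the simplex $\{0 \le u_1 < \cdots < u_\ell \le (t-s)/m\}$. Taking the $\ell^\infty$ tensor norm, using $\|\scrm\|\le\Lambda$ and $\|e^{-\scrm u}\|\le Ke^{-\lambda u}$, and collapsing the nested simplex integral via the symmetrisation trick (Lemma~\ref{lem: sym}), one obtains
\begin{align*}
\|\fraka_\ell^\mm(t-s,P_s)\| \le \frac{(\Lambda K d)^\ell}{\ell!\,\lambda^\ell}\bigl(1-e^{-\lambda(t-s)/m}\bigr)^\ell\,|P_s|^\ell,
\end{align*}
which is uniform in $m$, $s$, and $t$.

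With this tensor bound in hand, Fubini's theorem reduces the claim to the scalar estimate $\int_0^t \E^p[|P_s|^\ell]\,\dd s \le C(1+|p|^\ell)\,m$. Since $P_s \sim \mathcal{N}(\boldsymbol\mu_s,\boldsymbol\Sigma_s)$ with $\boldsymbol\mu_s = e^{-\scrm s/m}p$ and $\boldsymbol\Sigma_s = m\,\ssig_{s/m}$ by Lemma~\ref{lem: mean and cov}, I would decompose $P_s = \boldsymbol\mu_s + G_s$ with $G_s \sim \mathcal{N}(0,\boldsymbol\Sigma_s)$ and use the convexity bound $|P_s|^\ell \le 2^{\ell-1}(|\boldsymbol\mu_s|^\ell + |G_s|^\ell)$. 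The mean contribution integrates to $\mathcal{O}(|p|^\ell m)$ thanks to the exponential decay $|\boldsymbol\mu_s|\le Kd|p|\,e^{-\lambda s/m}$, while a standard Gaussian moment estimate gives $\E[|G_s|^\ell]\le C(\ell,d)\|\boldsymbol\Sigma_s\|^{\ell/2}\le C(\ell,d,K,\lambda^{-1})\,m^{\ell/2}$ (using $\|\ssig_{s/m}\|\le K^2 d/(2\lambda)$), so the fluctuation part integrates to $\mathcal{O}(m^{\ell/2})$ on the fixed window $[0,t]$.

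For $\ell \ge 2$, the inequality $m^{\ell/2}\le m$ (valid in the small-mass regime $m\le 1$) combines the two estimates to yield the desired $C(1+|p|^\ell)m$ bound. The borderline case $\ell = 1$ is the main obstacle, since the crude Gaussian-fluctuation bound gives only $\mathcal{O}(\sqrt{m})$, not $\mathcal{O}(m)$; to recover the full decay one must exploit the explicit formula $\fraka_1^\mm(t-s,P_s) = (e^{-\scrm(t-s)/m}-\id)P_s$ and the cancellation $\E^p[G_s] = 0$ through a more refined componentwise argument that pairs the random vector $G_s$ against a rapidly decaying multiplier, rather than taking the norm first. Alternatively, one may observe that within the inductive step for Theorem~\ref{main theorem for singular limit} only values $\ell = n-2 \ge 2$ are actually invoked via this lemma, since the case $n=3$ is already handled in Proposition~\ref{prop: Phi3}.
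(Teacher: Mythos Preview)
Your approach differs from the paper's in the key reduction step. The paper pulls out the deterministic iterated integral and reduces to $\int_0^t\|\E^p[P_s^{\otimes\ell}]\|\,\dd s$, then invokes Lemma~\ref{lem: pure expectation is O(m)}. You instead (correctly) reduce to $\int_0^t\E^p[|P_s|^\ell]\,\dd s$ and bound this by splitting mean and Gaussian fluctuation. The paper's route is shorter, but its displayed inequality actually places the norm and expectation in the wrong order on the right-hand side: one can bound $\E^p[\|A\cdot P_s^{\otimes\ell}\|]$ by $d^\ell\|A\|\,\E^p[\|P_s^{\otimes\ell}\|]$, not by $d^\ell\|A\|\,\|\E^p[P_s^{\otimes\ell}]\|$. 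So what the paper's argument in fact establishes is the weaker bound $\int_0^t\|\E^p[\fraka_\ell^\mm(t-s,P_s)]\|\,\dd s\le C(1+|p|^\ell)m$, with the norm \emph{outside} the expectation.

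Your diagnosis of the case $\ell=1$ is exactly right, and goes further than you may realise: the lemma as stated is \emph{false} for $\ell=1$. Take $d=1$, $\scrm=1$, $p=0$; then $\|\fraka_1^\mm(t-s,P_s)\|=|e^{-(t-s)/m}-1|\,|P_s|$ with $\E|P_s|\sim\sqrt{m}$ on most of $[0,t]$, and the left-hand side is of order $t\sqrt{m}$, not $m$. Your proposed ``refined componentwise argument exploiting $\E^p[G_s]=0$'' cannot save the statement as written, because the norm sits inside the expectation and no cancellation survives; what such an argument would prove is precisely the norm-outside version. Fortunately that weaker version is all the applications ($\two_{n,7}^\mm$, $\two_{n,4}^\mm$, and Lemma~\ref{lem: handling B in II 2,5,8}) actually need, since the $\Xi^\mm$ factors are deterministic and the expectation can always be taken before the norm. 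Your argument for $\ell\ge2$ is complete (with the constant picking up a harmless factor of $t$ from the Gaussian-fluctuation piece).
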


The bound for $\two_{n,4}^\mm$ is  analogous. Observe that $\left(\na_p \fraka_{n-1}^\mm\right)(t,p)$ equals the iterated integral over $\blacktriangle_{n-1,\,t}$ of contractions of $\left( -\mom e^{-\mom t_j} \right)^{\otimes (n-1)} \otimes p^{\otimes (n-2)}$. Then, by 
Lemma~\ref{lem for II 7} (more precisely, the arguments thereof), we have that
\begin{align}\label{II n,4 estimate}
\left\|\two^\mm_{n,4}(t,p)\right\| \leq C\left(K,\Lambda, \lambda^{-1},n,d\right) \left(1+|p|^{n-2}\right) m.
\end{align}

\subsubsection{Estimate for $\two_{n, 5}^\mm$ and $\two_{n, 8}^\mm$}\label{subsubsec: II 2,5,8}

We first deal with $\two_{n, 8}^\mm(t,p)$, and the estimate for $\two_{n, 5}^\mm$ is similar. Recall that, for each $q \in \mathbb{N}$,
\begin{align}\label{Bq expansion}
&\frakb_{q}^\mm(t-s,P_s) = \fraka_{q-2}^\mm(t-s, P_s) \otimes\left[ \Xi^\mm(t-s)\right]^{\otimes 1}(t-s)  + \fraka_{q-4}^\mm(t-s, P_s) \otimes \left[\Xi^\mm(t-s)\right]^{\otimes 2}(t-s)^2 \nonumber\\
&\quad  + \cdots + \fraka_{q-2 \cdot \lfloor\frac{ q}{2}\rfloor}^\mm(t-s, P_s) \otimes \left[\Xi^\mm(t-s)\right]^{\otimes \lfloor\frac{q}{2}\rfloor}(t-s)^{\otimes \lfloor\frac{q}{2}\rfloor},
\end{align}
where $\Xi^\mm(t-s) = \scrm \cdot \left(\dashint_0^{\frac{t-s}{m}}\ssig_\sigma\,\dd\sigma\right)-\frac{\id}{2}$.  In addition, by Eq.~\eqref{Xi estimate} and Lemma~\ref{lem: average difference}, when $t>s>0$ we have that $\left\|\Xi^\mm(t)-\Xi^\mm(t-s)\right\| \leq \frac{2\Lambda K^2d^2}{\lambda}\cdot\frac{s}{t}$. 
  
Our crucial observation for estimating $\two_{n, 8}^\mm$ (similarly for $\two_{n, 5}^\mm$, $\two_{n, 2}^\mm$) is that all the terms in the expansion of $\frakb_{q}^\mm(t-s,P_s)$ will contribute to $\mathcal{O}(m)$, except for the constant term (degree-0 term in $P_s$). But the lowest degree term in $P_s$ is $$\fraka_{q-2 \cdot \lfloor\frac{ q}{2}\rfloor}^\mm(t-s, P_s) \otimes \left[\Xi^\mm(t-s)\right]^{\otimes \lfloor\frac{q}{2}\rfloor}(t-s)^{\lfloor\frac{q}{2}\rfloor},$$ so the degree-0 terms are present only when $q-2 \cdot \lfloor\frac{q}{2}\rfloor=0$; \emph{i.e.}, when $q$ is even.

\begin{lemma}\label{lem: handling B in II 2,5,8}
For each $\ell = 1,2,3,\ldots$ and $r = 0,1,2,\ldots$, it holds that\footnote{It will be clear from the proof that if both $\ell$ and $r$ $\leq n \in \mathbb{N}$, then the constant $C=C\left(K,\Lambda, \lambda^{-1},n,d,t\right)$.}

$\left\|\E^p\left[ \int_0^t \fraka_\ell(t-s, P_s) \otimes \left(\Xi^\mm(t-s)\right)^{\otimes r} (t-s)^r \,\dd s\right]\right\|  \leq C\left(K,\Lambda, \lambda^{-1},\ell,r,d,t\right) \left(1+|p|^\ell\right)\cdot m.$
\end{lemma}
Lemma~\ref{lem: handling B in II 2,5,8} (whose proof is in Appendix~\ref{sec: appendix, lemma}) directly implies that
\begin{align}\label{II 8, n odd}
\left\|\two_{n,8}^\mm(t,p)\right\| \leq C\left(K,\Lambda, \lambda^{-1},n,d,t \right) \left(1+|p|^{n-2}\right)\cdot m\qquad \text{ if $n$ is odd};
\end{align}
\begin{align}\label{II 8, n even}
&\Bigg\|\two_{n,8}^\mm(t,p) - \frac{1}{2} \underbrace{ \sum_{j=1}^d e_j \otimes e_j }_{\equiv \, \id} \otimes \left\{\int_0^t \left[\Xi^\mm(t-s)\right]^{\otimes  \left(\frac{n}{2}-1\right)}(t-s)^{\left(\frac{n}{2}-1\right)} \,\dd s\right\} 
\Bigg\|\nonumber \\
&\qquad\leq C\left(K,\Lambda, \lambda^{-1},n,d,t \right) \left(1+|p|^{n-2}\right)\cdot m \qquad \text{ if $n$ is even}.
\end{align}
Thus, we have obtained desired estimates for $\two_{n,8}$. 

Now we consider $\two_{n,5}^\mm$. For odd $n$, taking $q=n-1$ in Eq.~\eqref{Bq expansion} shows that $\left(\na_{p}\frakb_{n-1}^\mm\right)(t-s,P_s)$ consists of \emph{odd}-degree linear combinations of $P_s$. Thus, similar arguments as for $\two^\mm_{n,8}$ (employing the structure of $\fraka_\bullet^\mm$ and adaptations of the proof for Lemmas~\ref{lem for II 7} $\&$ \ref{lem: handling B in II 2,5,8}) lead to 
\begin{align}\label{II 5, n odd}
\left\|\two_{n,5}^\mm(t,p)\right\| \leq C\left(K,\Lambda, \lambda^{-1},n,d,t \right) \left(1+|p|^{n-4}\right)\cdot m\qquad \text{ if $n$ is odd}, 
\end{align}
On the other hand, if $n$ is even, the same arguments above show that there is one $\mathcal{O}(1)$ term arising from $\left(\p_{p_j}\fraka_1^\mm\right)(t-s,p)$. More precisely,

\begin{align*}
&\Bigg\|\two_{n,5}^\mm(t,p) - \Bigg\{\sum_{j=1}^d e_j \otimes   \E^p\Bigg[\int_0^t  \left(\p_{p_j}\fraka_{1}^\mm\right)(t-s, P_s) \otimes\left[\Xi^\mm(t-s)\right]^{\otimes \left(\frac{n}{2}-1\right)} (t-s)^{\frac{n}{2}-1}\,\dd s\Bigg]\Bigg\}\Bigg\| \\
&\qquad\qquad \leq C\left(K,\Lambda, \lambda^{-1},n,d,t \right) \left(1+|p|^{n-2}\right)\cdot m\qquad \text{ if $n$ is even}.
\end{align*}
Direct computation shows that $\left[\left(\p_{p_j}\fraka_{1}^\mm\right)\right]^\beta(t-s,p)  = \left[e^{-\mom (t-s)} - \id\right]_j^{\beta}$; in particular, the derivative is independent of the $P_s$ variable.\footnote{The left-hand side is the $\beta^{\text{th}}$-component of$\left[\left(\p_{p_j}\fraka_{1}^\mm\right)\right](t-s,p)$, which is a $d$-vector. Moreover, given a matrix $\mathfrak{m} =\left\{\mathfrak{m}^{i}_j\right\} \in \mathfrak{gl}(d;\C)$, for each fixed $k \in \{1,\ldots,d\}$ we denote by $\mathfrak{m}^{k}_{\bullet}$ the vector $\left(\mathfrak{m}^{k}_{1}, \ldots, \mathfrak{m}^{k}_{d}\right)^\top \in \C^d$.}  Thus
\begin{align}\label{II 5, n even}
&\Bigg\|\two_{n,5}^\mm(p,t) - \Bigg\{\sum_{j=1}^d e_j \otimes \Bigg[ \int_0^t \left[e^{-\mom (t-s)} - \id\right]^{j}_{\bullet}\otimes \left[\Xi^\mm(t-s)\right]^{\otimes \left(\frac{n}{2}-1\right)} (t-s)^{\frac{n}{2}-1}\,\dd s\Bigg]\Bigg\}\Bigg\|\nonumber \\
&\qquad\qquad \leq C\left(K,\Lambda, \lambda^{-1},n,d,t \right) \left(1+|p|^{n-2}\right)\cdot m\qquad \text{ if $n$ is even}.
\end{align}
The left-hand side can be further simplified by Lemma~\ref{lem: comparing shift in time, Xi(t-m.)} below.

\subsubsection{Estimate for $\two_{n, 2}^\mm$}\label{subsubsec: II 2} Our estimates are parallel in the large to those for $\two_{n,1}^\mm$ in \S\ref{subsubsec: II n,1}.  Indeed, by definition of $\frakb_{n-1}^\mm$, it holds that 
\begin{align*}
\two_{n,2}^\mm(t,p) = \sum_{j=1}^{\lfloor \frac{n-1}{2}
\rfloor}\E^p\Bigg[\int_0^t \left(-\mom P_s\right)\otimes \fraka_{n-1-2j}^{\mm}(t-s, P_s)\left[\Xi^\mm(t-s)\right]^{\otimes j} (t-s)^j\,\dd s \Bigg].
\end{align*}
In the above, for each $\ell = n-1-2j$ we have
\begin{align*}
\left(-\scrm P_s\right)\otimes \fraka_\ell^\mm (t-s, P_s) = \left\{\,\,\idotsint\limits_{0\leq t_1<\dots<t_\ell\leq \frac{t-s}{m}} \,\,(-\scrm)\otimes \bigotimes_{j=1}^\ell  \left( -\scrm e^{-\scrm t_j} \right)\, \dd t_1\ldots\,\dd 
 t_\ell\right\}\cdot P_s^{\otimes 
 (\ell+1)}.
\end{align*}
Thanks to Lemma~\ref{lem:GaussianMoment}, one may further express (with $C_k \equiv \frac{(2k)!}{k! 2^k}$ from now on)
\begin{eqnarray*}
&&\E^p\left[P_s^{\otimes (\ell+1)}\right] = \left( e^{-\mom s} p \right)^{\otimes (\ell+1)} +\mathcal{R}_{\ell+1}^\mm(s,p),\\
&&\mathcal{R}_{\ell+1}^\mm(s,p) = \sum_{k=1}^{\lfloor \frac{\ell+1}{2} \rfloor} C_k \,\sym\left\{\left(e^{-\mom s}p\right)^{\otimes (\ell+1-2k)} \otimes \left(\int_0^s e^{-\mom \varsigma} e^{-\frac{\scrm^*}{m}\varsigma}\,\dd\varsigma \right)^{\otimes k}\right\}.
\end{eqnarray*}

To resume, by considering $\sigma:=\frac{s}{m}$ and taking $\E^p$, we obtain that
\begin{align*}
\two_{n,2}^\mm(t,p) &= \sum_{j=1}^{\lfloor \frac{n-1}{2}
\rfloor}\int_0^{\frac{t}{m}}  \left\{\,\,\idotsint\limits_{0\leq t_1<\dots<t_\ell\leq \frac{t}{m}-\sigma} \,\,(-\scrm)\otimes \bigotimes_{h=1}^{\ell=n-1-2j}  \left(-\scrm e^{-\scrm t_h} \right)\, \dd t_1\ldots\,\dd 
 t_\ell\right\} 
\nonumber\\
&\qquad\cdot\left\{ \left(e^{-\scrm \sigma}p\right)^{\otimes (\ell+1)} + \mathcal{R}^\mm_{\ell+1}(m\sigma,p) \right\}\otimes \left[\Xi^\mm(t-m\sigma)\right]^{\otimes j} (t-m\sigma)^j\,\dd \sigma,
\end{align*}
where
\begin{align*}
\mathcal{R}_{\ell+1}^\mm(m\sigma,p) = \sum_{k=1}^{\lfloor \frac{\ell+1}{2} \rfloor} C_k\,\sym\left\{\left(e^{-\scrm\sigma}p\right)^{\otimes (\ell+1-2k)} \otimes \left(\int_0^\sigma e^{-\scrm \varsigma} e^{-{\scrm^*}\varsigma}\,\dd\varsigma \right)^{\otimes k}\right\}\cdot m^k.
\end{align*}

Let us write $    \two_{n,2}^\mm(t,p) = \two_{n,2,1}^\mm(t,p) + \two_{n,2,2}^\mm(t,p)$ where \begin{align*}
\two_{n,2,1}^\mm(t,p) &:=  \sum_{j=1}^{\lfloor \frac{n-1}{2}
\rfloor}\E^p\Bigg[\int_0^{\frac{t}{m}}  \left\{\,\,\idotsint\limits_{0\leq t_1<\dots<t_\ell\leq \frac{t}{m}-\sigma} \,\,(-\scrm)\otimes \bigotimes_{h=1}^{\ell:=n-1-2j}  \left( -\scrm e^{-\scrm t_h} \right)\, \dd t_1\ldots\,\dd 
 t_\ell\right\} 
\nonumber\\
&\qquad\cdot \left(e^{-\scrm \sigma}p\right)^{\otimes (\ell+1)} \otimes \left[\Xi^\mm(t-m\sigma)\right]^{\otimes j} (t-m\sigma)^j\,\dd \sigma \Bigg];
\end{align*}
\begin{align*}
\two_{n,2,2}^\mm(t,p) &:=  \sum_{j=1}^{\lfloor \frac{n-1}{2}
\rfloor}\int_0^{\frac{t}{m}}  \left\{\,\,\idotsint\limits_{0\leq t_1<\dots<t_\ell\leq \frac{t}{m}-\sigma} \,\,(-\scrm)\otimes \bigotimes_{h=1}^{\ell=n-1-2j}  \left( -\scrm e^{-\scrm t_h} \right)\, \dd t_1\ldots\,\dd 
 t_\ell\right\} 
\nonumber\\
&\qquad\cdot  \mathcal{R}^\mm_{\ell+1}(p, m\sigma) \otimes \left[\Xi^\mm(t-m\sigma)\right]^{\otimes j} (t-m\sigma)^j\,\dd \sigma.  
\end{align*}
\smallskip
\noindent
\underline{$\two_{n,2,1}^\mm(t,p)$.} We introduce new variables $\tau_0 := \sigma$ and $\tau_j := t_j + \tau_0$ for each $j \in \{1,\ldots,\ell\}$ to deduce  
\begin{align*}
\two_{n,2,1}^\mm(t,p) &= \sum_{j=1}^{\lfloor \frac{n-1}{2}\rfloor} \Bigg\{\,\,\idotsint\limits_{0\leq \tau_0 <\ldots<\tau_\ell\leq \frac{t}{m}} \bigotimes_{h=0}^{\ell:=n-1-2j}  \left( -\scrm e^{-\scrm \tau_h}p \right) \otimes\left[\Xi^\mm(t-m\tau_0 )\right]^{\otimes j} (t-m\tau_0 )^j \,\dd \tau_0\ldots\dd\tau_\ell\Bigg\}.
\end{align*}

Assume  Lemma~\ref{lem: comparing shift in time, Xi(t-m.)} below for the moment. One may infer that
\begin{align}\label{II n,2,1 estimate}
&\two_{n,2,1}^\mm(t,p) = {\rm Error_{n,2,1}}(t,p) \\
&\quad +  \sum_{j=1}^{\lfloor \frac{n-1}{2}\rfloor} \left\{\,\,\idotsint\limits_{0\leq \tau_0 <\ldots<\tau_\ell\leq \frac{t}{m}} \bigotimes_{h=0}^{\ell}  \left( -\scrm e^{-\scrm \tau_h}p \right) \dd \tau_0\ldots\dd\tau_\ell\right\} \otimes \left[\Xi^\mm(t )\right]^{\otimes j} t^j \quad \text{ where $\ell:=n-1-2j$,} \nonumber
\end{align}
with the error term controlled by 
\begin{align}\label{II n,2,1 error}
\Big\|{\rm Error_{n,2,1}}(p,t)\Big\| \leq C\left(K,\Lambda,\lambda^{-1},n,d,t\right)\left(1+|p|^{n-3}\right)\cdot m.
\end{align} 

\begin{lemma}\label{lem: comparing shift in time, Xi(t-m.)}
Consider as before $\Xi^\mm(s):=\scrm\cdot\left(\dashint_0^{s\slash m} {\ssig}_\varsigma\,\dd\varsigma\right)-\frac{\id}{2}.$ For each $j \in \mathbb{N}$, $m \in ]0,1]$, and $\tau_0>0$ such that $t-m\tau_0 > 0$, it holds that 
\begin{align*}
\left\|\left[\Xi^\mm(t-m\tau_0 )\right]^{\otimes j} (t-m\tau_0 )^j - \left[\Xi^\mm(t )\right]^{\otimes j} t^j\right\| \leq C\left(K,\Lambda,\lambda^{-1},j,d,t\right)\cdot m.
\end{align*}
\end{lemma}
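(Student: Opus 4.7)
The plan is to decompose $s\,\Xi^\mm(s)$ into a linear-in-$s$ leading part plus an $O(m)$ correction, and then propagate this decomposition to the $j$-fold tensor power via telescoping. The first step is a Lyapunov calculation: integrating $\tfrac{d}{du}\bigl(e^{-\scrm u}e^{-\scrm^* u}\bigr)= -\scrm\, e^{-\scrm u}e^{-\scrm^* u} - e^{-\scrm u}e^{-\scrm^* u}\scrm^*$ over $[0,\infty)$ yields the identity $\scrm\,\scrc + \scrc\,\scrm^* = \id$, so that $A := \scrm\scrc - \tfrac{\id}{2} = \tfrac12(\scrm\scrc - \scrc\scrm^*)$ is skew-Hermitian. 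Using the tail bound $\|\scrc - \ssig_\varsigma\| \leq \tfrac{K^2 d}{2\lambda}e^{-2\lambda\varsigma}$ and Fubini, one derives
\[
\int_0^T \ssig_\varsigma\,d\varsigma = T\,\scrc - \mathcal{Q}(T),\qquad \sup_{T\geq 0}\|\mathcal{Q}(T)\| \leq \tfrac{K^2 d}{4\lambda^2}.
\]
Multiplying by $m\,\scrm$ and subtracting $s\,\id/2$ yields the pivotal identity
\[
s\,\Xi^\mm(s) = sA + m\,E^\mm(s),\qquad \sup_{s,\,m>0}\|E^\mm(s)\| \leq C_1(K,\Lambda,\lambda^{-1},d).
\]

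Next, set $F_i := s_i\,\Xi^\mm(s_i)$ with $s_1:=t-m\tau_0$, $s_2:=t$. From the above, $\|F_i\| \leq \|A\|\,t + mC_1 \leq C_2(K,\Lambda,\lambda^{-1},d,t)$ uniformly in $m, \tau_0$. The multilinear telescoping identity
\[
F_1^{\otimes j} - F_2^{\otimes j} = \sum_{k=0}^{j-1} F_1^{\otimes k}\otimes (F_1 - F_2) \otimes F_2^{\otimes(j-1-k)}
\]
together with $\|F_i\|\leq C_2$ reduces the claim to controlling $\|F_1 - F_2\|$ by $O(m)$ up to a combinatorial factor $j\,C_2^{j-1}$. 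Splitting
$F_1 - F_2 = -m\tau_0\,A + m\bigl(E^\mm(s_1) - E^\mm(s_2)\bigr),$
the second summand is unconditionally $O(m)$ by the uniform boundedness of $E^\mm$.

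\textbf{Main obstacle.} The residual summand $-m\tau_0\,A$ yields only an $O(m\tau_0)$ contribution at face value, whereas the lemma asserts a $\tau_0$-uniform $O(m)$ bound. This gap is non-trivial: at the extreme $\tau_0\to t/m$, one has $s_1\to 0^+$, so $F_1^{\otimes j}\to 0$ while $F_2^{\otimes j}\approx (tA)^{\otimes j}$ is of order $t^j$, so naive Lipschitz telescoping is borderline insufficient. To close the gap I propose to refine the tensor expansion by isolating the pure-$A$ part: write $F_i^{\otimes j} = s_i^j A^{\otimes j} + m\,\mathcal{R}_i^\mm$ with $\|\mathcal{R}_i^\mm\| \leq C_3(K,\Lambda,\lambda^{-1},j,d,t)$, which reduces the obstruction to bounding $\|(s_1^j - s_2^j)\,A^{\otimes j}\|$. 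The algebraic identity $s_1^j - s_2^j = -(m\tau_0)\sum_{k=0}^{j-1} s_1^k s_2^{j-1-k}$ exhibits the remaining $m\tau_0$-growth; the principal technical difficulty is to exploit the skew-Hermitian structure of $A$, combined with a finer analysis of $E^\mm(s_1)-E^\mm(s_2)$ in the regime $s_1\ll t$ (where $\ssig_{s_1/m}$ has not yet saturated to $\scrc$), in order to cancel this scalar growth against a matching contribution in the mixed $m\,E^\mm$ tensor terms, and thereby recover the $\tau_0$-free bound asserted by the lemma.
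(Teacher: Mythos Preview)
Your decomposition $s\,\Xi^\mm(s)=sA+m\,E^\mm(s)$ with $A=\scrm\scrc-\tfrac{\id}{2}$ and uniformly bounded $E^\mm$ is correct and is a somewhat cleaner angle than the paper's: the paper does not isolate $A$ but bounds $\|\Xi^\mm(t-m\tau_0)-\Xi^\mm(t)\|$ directly via the averaged-integral Lemma~\ref{lem: average difference}, obtaining $\tfrac{2\Lambda K^2d^2}{\lambda}\cdot\tfrac{m\tau_0}{t}$, and then telescopes exactly as you do. Both routes run into the obstacle you correctly flag.

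Your proposed resolution, however, cannot work, because the lemma as stated is \emph{false} whenever $A\neq 0$. Already for $j=1$ your own identity gives
\[
F_1-F_2 \;=\; -m\tau_0\,A \;+\; m\bigl(E^\mm(s_1)-E^\mm(s_2)\bigr),
\]
and at $\tau_0 = t/m-\varepsilon$ this tends to $-tA$ (the $E^\mm$-part is $O(m)$ uniformly), so $\|F_1-F_2\|\to t\|A\|$, which is of order one, not $O(m)$. There is nothing to cancel $-m\tau_0\,A$ against: the skew-Hermitian structure of $A$ is irrelevant to a norm estimate, and $\|E^\mm(s_1)-E^\mm(s_2)\|\leq 2C_1$ independently of $\tau_0$, so no matching $O(m\tau_0)$ contribution is available from the mixed terms. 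The paper's own proof slips at precisely this point, passing from $\tfrac{2\Lambda K^2d^2}{\lambda}\cdot\tfrac{m\tau_0}{t}$ to $\tfrac{2\Lambda K^2d^2}{\lambda}\cdot m$ as though $\tau_0\leq t$, whereas only $\tau_0<t/m$ is assumed; and similarly for the binomial bound on $t^j-(t-m\tau_0)^j$. What either argument honestly delivers is $C\cdot m(1+\tau_0)$. In those downstream applications where the lemma is invoked against an exponentially decaying weight in $\tau_0$ (for instance the estimate of $\two_{n,2,1}^\mm$, where the factor $-\scrm e^{-\scrm\tau_0}p$ supplies $e^{-\lambda\tau_0}$), this weaker bound still yields $O(m)$ after integration, and that is the correct way to repair the argument rather than seeking a cancellation that is not there.
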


Since $\lfloor \frac{n-1}{2}\rfloor = \frac{n}{2}-1$ for $n$ even, the second term on the RHS of  Eq.~\eqref{II n,2,1 estimate} equals
\begin{align*}
    &\sum_{j=1}^{\lfloor \frac{n-1}{2}\rfloor}\left(\,\,\idotsint\limits_{0\leq \tau_0 <\ldots<\tau_\ell\leq \frac{t}{m}} \bigotimes_{h=0}^{\ell:=n-1-2j}  \left( -\scrm e^{-\scrm \tau_h}p \right) \dd \tau_0\ldots\dd\tau_\ell\right) \otimes \left[\Xi^\mm(t )\right]^{\otimes j} t^j \\
    &=
    \begin{cases} \fraka_{n-2}(t,p) \otimes \left[\Xi^\mm(t)\right]^{\otimes 1} t^1 + \cdots + \fraka_{1}(t,p) \otimes \left[\Xi^\mm(t)\right]^{\otimes \frac{n-1}{2}} t^{\frac{n-1}{2}} \text{ for odd $n$},\\
    \fraka_{n-2}(t,p) \otimes \left[\Xi^\mm(t)\right]^{\otimes 1} t^1 + \cdots + \fraka_{2}(t,p) \otimes \left[\Xi^\mm(t)\right]^{\otimes \frac{n}{2}-1} t^{\frac{n}{2}-1} \text{ for even $n$}.
    \end{cases}
\end{align*} 
Thus, in view of the definition of $\frakb_q^\mm$, we have 
 \begin{align*}
    &\sum_{j=1}^{\lfloor \frac{n-1}{2}\rfloor}\left(\,\,\idotsint\limits_{0\leq \tau_0 <\ldots<\tau_\ell\leq \frac{t}{m}} \bigotimes_{h=0}^{\ell:=n-1-2j}  \left( -\scrm e^{-\scrm \tau_h}p \right) \dd \tau_0\ldots\dd\tau_\ell\right) \otimes \left[\Xi^\mm(t )\right]^{\otimes j} t^j \\
    &\qquad =
    \begin{cases} \frakb_{n}(t,p) \qquad \text{ for odd $n$},\\
    \frakb_{n}(t,p)  - \left[\Xi^\mm(t)\right]^{\otimes \frac{n}{2}} t^{\frac{n}{2}} \qquad \text{ for even $n$}.
    \end{cases}
\end{align*} 

We may thus reformulate Eq.~\eqref{II n,2,1 estimate} as
\begin{align}\label{II n,2,1 estimate, UPDATED}
\two_{n,2,1}^\mm(t,p) = {\rm Error_{n,2,1}}(t,p) +   \begin{cases}
\frakb_{n}(t,p) \quad \text{ for odd $n$},\\
    \frakb_{n}(t,p)  - \left[\Xi^\mm(t)\right]^{\otimes \frac{n}{2}} t^{\frac{n}{2}} \quad \text{ for even $n$}.
 \end{cases}
\end{align}

\noindent
\underline{$\two_{n,2,2}^\mm(t,p)$.} Similar to the estimate for $\two_{n,1}$ (Eqs.~\eqref{II 1, decomp}, \eqref{IR def}), we split the term $\mathcal{R}_{\ell+1}^\mm$ into 
\begin{align*}
&\mathcal{R}_{\ell+1}^\mm(m\sigma,p) = \mathcal{R}_{\ell+1,{\rm REG}}^\mm(m\sigma,p) + \mathcal{R}_{\ell+1,{\rm SING}}^\mm(m\sigma,p)\\
&\,\,:= \sum_{k=1}^{\lfloor \frac{\ell+1}{2} \rfloor} C_k\,\sym\left\{\left(e^{-\scrm\sigma}p\right)^{\otimes (\ell+1-2k)} \otimes \left(\int_0^\sigma e^{-\scrm \varsigma} e^{-{\scrm^*}\varsigma}\,\dd\varsigma \right)^{\otimes k}\right\}\cdot m^k \1_{\{\ell+1-2k > 0\}} \\
&\quad + \sum_{k=1}^{\lfloor \frac{\ell+1}{2} \rfloor} C_k\,\sym\left\{\left(e^{-\scrm\sigma}p\right)^{\otimes (\ell+1-2k)} \otimes \left(\int_0^\sigma e^{-\scrm \varsigma} e^{-{\scrm^*}\varsigma}\,\dd\varsigma \right)^{\otimes k}\right\}\cdot m^k \1_{\{\ell+1-2k = 0\}}.
\end{align*}
Recall $\ell = n-1-2j$ and $C_k = \frac{(2k)!}{k! 2^k}$. Set accordingly
$\two_{n,2,2}^\mm(t,p) = \two_{n,2,2, {\rm REG}}^\mm(t,p) + \two_{n,2,2, {\rm SING}}^\mm(t,p)$, where for $\bullet \in \{{\rm REG},\,{\rm SING}\}$ we put 
\begin{align*}
\two_{n,2,2, {\bullet}}^\mm(t,p)   &:=  \sum_{j=1}^{\lfloor \frac{n-1}{2}
\rfloor}\int_0^{\frac{t}{m}}  \left\{\,\,\idotsint\limits_{0\leq t_1<\dots<t_\ell\leq \frac{t}{m}-\sigma} \,\,(-\scrm)\otimes \bigotimes_{h=1}^{\ell}  \left( -\scrm e^{-\scrm t_h} \right)\, \dd t_1\ldots\,\dd 
 t_\ell\right\} 
\nonumber\\
&\qquad\cdot  \mathcal{R}^\mm_{\ell+1, \bullet}(p, m\sigma) \otimes \left[\Xi^\mm(t-m\sigma)\right]^{\otimes j} (t-m\sigma)^j\,\dd \sigma.
\end{align*}

We first bound the regular term $\two_{n,2,2, {\rm REG}}^\mm$. Note that
\begin{align*}
\left\|\mathcal{R}^\mm_{\ell+1,{\rm REG}}(m\sigma,p)
    \right\| \leq C\left(K,\lambda^{-1}, \ell,d\right) \left(1+|p|^{\ell-1}\right) e^{- \frac{1}{2} \lambda \sigma}\cdot m,\quad \ell=n-1-2j,
\end{align*}
for which we have used the facts that $\ell+1-2k \geq \frac{1}{2}$ for all the summands in ${\mathcal{R}^\mm_{\ell+1,{\rm REG}}(m\sigma,p)}$, and $\left\|\ssig_\sigma\right\| \leq \frac{K^2d}{2\lambda}$ for any $\sigma>0$ (Eq.~\eqref{Sigma estimate, simple}). The exponentially decaying factor $e^{- \frac{1}{2} \lambda \sigma}$ gives us
\begin{align*}
&\Bigg\| \int_0^{\frac{t}{m}}  \left\{\,\,\idotsint\limits_{0\leq t_1<\dots<t_\ell\leq \frac{t}{m}-\sigma} \,\,(-\scrm)\otimes \bigotimes_{h=1}^{\ell}  \left( -\scrm e^{-\scrm t_h} \right)\, \dd t_1\ldots\,\dd
 t_\ell\right\} \cdot\\
 &\qquad\qquad  \cdot {\mathcal{R}^\mm_{\ell+1,{\rm REG}}(m\sigma,p)}\,\dd \sigma\Bigg\| \leq C\left(K,\Lambda,\lambda^{-1},n,d, j\right)\left(1+|p|^{\ell-1}\right) \cdot m,
\end{align*}
by rewriting the left-hand side as an iterated integral over $\blacktriangle_{\ell+1, \frac{t}{m}}:=\left\{0\leq\tau_0<\tau_1< \ldots< \tau_\ell \leq \frac{t}{m}\right\}$ with $\tau_0 := \sigma$ and $\tau_j := \sigma + t_j$ for  $j \in \{1,\ldots,\ell\}$, and then employing the  trick of symmetrisation in Lemma~\ref{lem: sym}. Using Lemma~\ref{lem: comparing shift in time, Xi(t-m.)} and estimating $\left\|\Xi^\mm(t)\right\|$ via Eq.~\eqref{Xi m, uniform bd}, we thus bound
\begin{align}\label{II n,2,2, REG, final bound}
\left\|\two^\mm_{n,2,2,{\rm REG}}(t,p)\right\| &\leq C\left(K,\lambda^{-1},\Lambda, n,t,d\right) \left(1+|p|^{n-2}\right)\cdot m,
\end{align}
since $\ell := n-1-2j$ with $j \geq 1$.

Finally, we turn to $\two_{n,2,2, {\rm SING}}^\mm$, which is independent of $p$. Observe that $\mathcal{R}^\mm_{\ell+1,{\rm SING}}(m\sigma,p) = C_{\frac{\ell+1}{2}} \left(\ssig_{\sigma}\right)^{\otimes \left(\frac{\ell+1}{2}\right)} \cdot m^{\frac{\ell+1}{2}} \,\1_{\{\text{$\ell$ is odd}\}}$. As $\ell := n-1-2j$ where $j \in \left[ 1, \lfloor \frac{n-1}{2} \rfloor \right] \cap \mathbb{Z}$, we have
\begin{align*}
\mathcal{R}^\mm_{\ell+1,{\rm SING}}(m\sigma,p) = C_{\frac{n}{2} -j} \left(\ssig_{\sigma}\right)^{\otimes \left(\frac{n}{2} -j\right)} \cdot m^{\frac{n}{2} -j} \,\1_{\{\text{$n$ is even}\}}
\end{align*}
for $j \in \left\{1,\ldots, \frac{n}{2}-1\right\}$. We consider here $n \geq 4$ only. It follows that
\begin{align*}
\two_{n,2,2, {\rm SING}}^\mm(t,p)&=  \sum_{j=1}^{\frac{n}{2}-1}\int_0^{\frac{t}{m}}  \left\{\,\,\idotsint\limits_{0\leq t_1<\dots<t_{n-1-2j}\leq \frac{t}{m}-\sigma} \,\,(-\scrm)\otimes \bigotimes_{h=1}^{n-1-2j}  \left( -\scrm e^{-\scrm t_h} \right)\, \dd t_1\ldots\,\dd 
 t_{n-1-2j}\right\} 
\nonumber\\
&\qquad\cdot   C_{\frac{n}{2} -j} {\bf \Sigma}_{\sigma}^{\otimes \left(\frac{n}{2} -j\right)} \cdot m^{\frac{n}{2} -j}  \otimes \left[\Xi^\mm(t-m\sigma)\right]^{\otimes j} (t-m\sigma)^j\,\dd \sigma.
\end{align*}
Separating the term with $j=\frac{n}{2}-1$ from the rest, let us further decompose
\begin{align*}
&\two_{n,2,2, {\rm SING}}^\mm(t,p) = \two_{n,2,2, {\rm SING}, {\rm BAD}}^\mm(t,p)+\two_{n,2,2, {\rm SING},\,{\rm GOOD}}^\mm(t,p)\\  
&\quad:= m \cdot \int_0^{\frac{t}{m}} \left(\int_0^{\frac{t}{m}-\sigma} (-\scrm)\otimes(-\scrm e^{-\scrm t_1})\,\dd t_1\right)\cdot{\bf \Sigma}_\sigma \otimes \left[\Xi^\mm(t-m\sigma) \right]^{\otimes \left( \frac{n}{2}-1 \right)}(t-m\sigma)^{\frac{n}{2}-1}\,\dd \sigma\\
&\qquad +  \sum_{j=1}^{\frac{n}{2}-2}\int_0^{\frac{t}{m}}  \left\{\,\,\idotsint\limits_{0\leq t_1<\dots<t_{n-1-2j}\leq \frac{t}{m}-\sigma} \,\,(-\scrm)\otimes \bigotimes_{h=1}^{n-1-2j}  \left( -\scrm e^{-\scrm t_h} \right)\, \dd t_1\ldots\,\dd 
 t_{n-1-2j}\right\} \cdot 
\nonumber\\
&\qquad\quad\cdot   C_{\frac{n}{2} -j} {\bf \Sigma}_{\sigma}^{\otimes \left(\frac{n}{2} -j\right)} \cdot m^{\frac{n}{2} -j}  \otimes \left[\Xi^\mm(t-m\sigma)\right]^{\otimes j} (t-m\sigma)^j\,\dd \sigma \qquad \text{for $n \geq 4$ even}.
\end{align*}

The bound for the good term is similar to that for $\two^\mm_{n,2,2,{\rm REG}}$ above. Indeed,  the symmetrisation trick in Lemma~\ref{lem: sym} leads to
\begin{align*}
\left\|\,\,\,\idotsint\limits_{0\leq t_1<\dots<t_{n-1-2j}\leq \frac{t}{m}-\sigma} (-\scrm)\otimes \bigotimes_{h=1}^{n-1-2j}  \left( -\scrm e^{-\scrm t_h} \right)\, \dd t_1\ldots\,\dd 
 t_{n-1-2j}\right\| \leq C\left(K, \Lambda, \lambda^{-1},n,d\right).
\end{align*}
Eq.~\eqref{Sigma estimate, simple} leads to $\left\| \left(\ssig_{\sigma}\right)^{\otimes \left(\frac{n}{2} -j\right)} \right\| \leq C\left(K,\lambda^{-1},n,d\right)$. Also, Lemma~\ref{lem: comparing shift in time, Xi(t-m.)} and Eq.~\eqref{Xi m, uniform bd} lead to
$\left\|\left[\Xi^\mm(t-m\sigma)\right]^{\otimes j} (t-m\sigma)^j\right\| \leq C\left(K,\Lambda,\lambda^{-1},n,d,t\right) (1+m)$. The factor $m^{\frac{n}{2}-j}$ contributes to $m^2$ and higher orders ($1 \leq j \leq \frac{n}{2}-2$), and the integral $\int_0^{{t}\slash{m}}\,\dd\sigma$ yields a factor of $m^{-1}$. Hence,
\begin{align}\label{II n,2,2, SING, GOOD; final bound}
\left\|\two_{n,2,2,\,{\rm SING},\,{\rm GOOD}}^\mm(t,p)\right\| \leq C\left(K,\Lambda,\lambda^{-1},n,d,t\right) \cdot m.
\end{align}

It remains to bound $\two_{n,2,2,\,{\rm SING},\,{\rm BAD}}^\mm$. As $    \int_0^{\frac{t}{m}-\sigma} -\scrm e^{-\scrm t_1}\,\dd t_1 = -\id + e^{-\scrm\left(\frac{t}{m}-\sigma\right)}$, one has
\begin{align*}
\two_{n,2,2,\,{\rm SING},\,{\rm BAD}}^\mm(t,p) 
&= m \cdot \int_0^{\frac{t}{m}} \left(\scrm \ssig_\sigma\right)\otimes \left[\Xi^\mm(t-m\sigma) \right]^{\otimes \left( \frac{n}{2}-1 \right)}(t-m\sigma)^{\frac{n}{2}-1}\,\dd \sigma\\
&\quad + m \cdot \int_0^{\frac{t}{m}} (-\scrm)\otimes e^{-\scrm\left(\frac{t}{m}-\sigma\right)} \cdot\ssig_\sigma\otimes \left[\Xi^\mm(t-m\sigma) \right]^{\otimes \left( \frac{n}{2}-1 \right)}(t-m\sigma)^{\frac{n}{2}-1}\,\dd \sigma,
\end{align*}
thanks to Fubini's theorem. For the second line of the right-most term,  note  by Lemma~\ref{lem: comparing shift in time, Xi(t-m.)},  the change of variables $\sigma' := \frac{t}{m}-\sigma$, and Eqs.~\eqref{Xi m, uniform bd} $\&$ \eqref{Sigma estimate, simple}   that 
\begin{align*}
&m \left\|\int_0^{\frac{t}{m}} (-\scrm)\otimes e^{-\scrm\left(\frac{t}{m}-\sigma\right)} \cdot\ssig_\sigma\otimes \left[\Xi^\mm(t-m\sigma) \right]^{\otimes \left( \frac{n}{2}-1 \right)}(t-m\sigma)^{\frac{n}{2}-1}\,\dd \sigma\right\|\\
&\qquad\leq C\left(K,\Lambda, \lambda^{-1}, n,d, t\right) \cdot m\,\left\|\int_0^{\frac{t}{m}} (-\scrm)\otimes e^{-\scrm\left(\frac{t}{m}-\sigma\right)} \cdot\ssig_\sigma \,\dd \sigma\right\|  \leq C\left(K,\Lambda, \lambda^{-1}, n,d, t\right) \cdot m.
\end{align*}

For the first line, we utilise once again  Lemma~\ref{lem: comparing shift in time, Xi(t-m.)} and Eq.~\eqref{Xi m, uniform bd} to express 
\begin{align*}
&m \cdot \int_0^{\frac{t}{m}} \left(\scrm \ssig_\sigma\right)\otimes \left[\Xi^\mm(t-m\sigma) \right]^{\otimes \left( \frac{n}{2}-1 \right)}(t-m\sigma)^{\frac{n}{2}-1}\,\dd \sigma\\
&\quad = \underbrace{m \cdot \left\{\int_0^{\frac{t}{m}} \left(\scrm \ssig_\sigma\right)\,\dd\sigma\right\} \otimes \left[\Xi^\mm(t)\right]^{\otimes \left(\frac{n}{2}-1\right)} t^{\frac{n}{2}-1}}_{=:\, \text{[LEAD]}} + C\left(\Lambda,\lambda^{-1},n,t\right)\cdot m^2.
\end{align*}
The nontrivial leading term can be further written as
\begin{align}\label{lead}
{\rm [LEAD]} = \scrm \cdot \left(\dashint_{0}^{\frac{t}{m}} \ssig_\sigma \,\dd \sigma \right) \otimes \left[\Xi^\mm(t)\right]^{\otimes \left(\frac{n}{2}-1\right)} t^{\frac{n}{2}}.    
\end{align}

In conclusion, we have obtained that
\begin{align}
    \label{II n,2,2,SING, BAD, final estimate}
\left\|\two_{n,2,2,\,{\rm SING},\,{\rm BAD}}^\mm(t,p)-{\rm [LEAD]}  \right\| \leq    C\left(K,\Lambda, \lambda^{-1}, n,d, t\right) \cdot m.
\end{align}

\subsubsection{Conclusion of the proof}  Recall from Eq.~\eqref{nine terms} that for each level $n \in \mathbb{Z}_{\geq 3}$, we have split  the expected signature $\Phi^\mm(p,t)$ into nine terms: $\two_{n,1}^\mm, \ldots, \two_{n,9}^\mm$. In \S\S\ref{subsubsec: II n,1}--\ref{subsubsec: II 2} we have estimated $\two_{n,1}^\mm$ in Eq.~\eqref{bound for II, 1};     $\two_{n,2}^\mm$ in Eqs.\eqref{II n,2,1 error} -\eqref{II n,2,2,SING, BAD, final estimate}; $\two_{n,3}^\mm$ in Eq.~\eqref{II 3}; $\two_{n,4}^\mm$ in Eq.~\eqref{II n,4 estimate}; $\two_{n,5}^\mm$ in Eqs.~\eqref{II 5, n odd} and \eqref{II 5, n even}; $\two_{n,6}^\mm$ in Eq.~\eqref{II 6}; $\two_{n,7}^\mm$ in Eq.~\eqref{II n,7 estimate}; $\two_{n,8}^\mm$ in Eqs.~\eqref{II 8, n odd} and \eqref{II 8, n even};  as well as  $\two_{n,9}^\mm$ in Eqs.~\eqref{II 9}. Putting together all these estimates, we arrive that
\begin{align*}
\Phi_n^\mm(t,p) =  \Phi_{n, {\rm good}}^\mm(t,p)  + {\rm Error}_n^\mm(t,p)
\end{align*}
with $\left\| {\rm Error}_n^\mm(t,p)\right\| \leq C\left(K, \Lambda, \lambda^{-1}, n,d,t \right)\cdot \left(1+|p|^{n-2}\right)\cdot m$. It only remains to check that 
\begin{equation}\label{June25, x}
    \Phi_{n, {\rm good}}^\mm(t,p) = \fraka_n^\mm(t,p) + \frakb_{n}^\mm(t,p) + E'(t,p)\qquad \text{for any $n \geq 3$}.
\end{equation} Throughout the rest of the proof, we label any term satisfying $\left\|E'\right\| \leq C\left(K, \Lambda, \lambda^{-1}, n,d,t \right)\cdot \left(1+|p|^{n-2}\right)\cdot m$ as $E'$, which may change from line to line.

To deduce Eq.~\eqref{June25, x} it suffices to check that, for any even number $n \geq 4$,
\begin{align*}
\left[\Xi^\mm(t)\right]^{\otimes \frac{n}{2}} t^{\frac{n}{2}} =  K_1 + K_2 + K_3 +E',    
\end{align*}
where
\begin{eqnarray*}
 &&K_1 =  \scrm \cdot \left(\dashint_{0}^{\frac{t}{m}} \ssig_\sigma \,\dd \sigma \right) \otimes \left[\Xi^\mm(t)\right]^{\otimes \left(\frac{n}{2}-1\right)} t^{\frac{n}{2}},\\
 && K_2 = \sum_{j=1}^d e_j \otimes \Bigg[ \int_0^t \left[{-}\id {+} e^{-\mom (t-s)}\right]^{j}_{\bullet}\otimes \left[\Xi^\mm(t-s)\right]^{\otimes \left(\frac{n}{2}-1\right)} (t-s)^{\frac{n}{2}-1}\,\dd s\Bigg],\\
 &&K_3 =  \frac{\id}{2}\otimes  \left[\int_0^t \left[\Xi^\mm(t-s)\right]^{\otimes  \left(\frac{n}{2}-1\right)}(t-s)^{\left(\frac{n}{2}-1\right)} \,\dd s\right].
\end{eqnarray*}

For this purpose, recall the definition of $\Xi^\mm$ from Eq.~\eqref{Xi, def} and notice first that 
\begin{align}\label{K1, endgame}
K_1 = \left(t\Xi^\mm(t) + \frac{\id}{2}t\right) \otimes \left[\Xi^\mm(t)\right]^{\otimes \left(\frac{n}{2}-1\right)} t^{\frac{n}{2}-1}.
\end{align}

Next, we split $K_2$ into two terms:
\begin{align*}
    K_2  &= K_{2,1} + K_{2,2}:= -\id \otimes \left[\int_0^t\left[\Xi^\mm(t-s)\right]^{\otimes \left(\frac{n}{2}-1\right)} (t-s)^{\frac{n}{2}-1}\,\dd s\right] \\
    &\qquad + \sum_{j=1}^d e_j \otimes \left[ \int_0^t \left(e^{-\mom (t-s)}\right)^{j}_{\bullet} \otimes \left[\Xi^\mm(t-s)\right]^{\otimes \left(\frac{n}{2}-1\right)} (t-s)^{\frac{n}{2}-1} \,\dd s \right].
\end{align*}

For the first term,  we shift to $\sigma := s \slash m$ to express
\begin{align*}
    K_{2,1} &= -\id \otimes \left\{m \int_0^{\frac{t}{m}} \left[ \Xi^\mm(t-m\sigma)^{\otimes \left(\frac{n}{2} - 1 \right)} (t-m\sigma)^{\frac{n}{2} - 1} \right]\,\dd \sigma\right\}.
\end{align*}
From Lemma~\ref{lem: comparing shift in time, Xi(t-m.)} we deduce that
\begin{align*}
&\left\|\left[\Xi^\mm(t-m\tau_0 )\right]^{\otimes \left(\frac{n}{2} - 1 \right)} (t-m\tau_0 )^{ \frac{n}{2} - 1 } - \left[\Xi^\mm(t )\right]^{\otimes \left(\frac{n}{2} - 1 \right)}t^{\frac{n}{2} - 1}\right\| \leq C\left(K,\Lambda,\lambda^{-1},n,d,t\right)\cdot m.
\end{align*}
Thus, $\left\| K_{2,1} + \id \otimes \left[\Xi^\mm(t )\right]^{\otimes \left(\frac{n}{2} - 1 \right)}t^{\frac{n}{2}} \right\| \leq C\left(K,\Lambda,\lambda^{-1},n,d,t\right)\cdot m$.

The estimate for $K_{2,2}$ is simple due to the presence of the exponential decaying factor. Indeed, by considering $\tau := \frac{t-s}{m}$, one obtains
\begin{align*}
    \left\|K_{2,2}\right\| &\leq m\cdot K\int_0^{\frac{t}{m}} e^{-\lambda\tau} \left\|\Xi^\mm(m\tau)\right\|^{\frac{n}{2}-1} (m\tau)^{\frac{n}{2}-1}\,\dd\tau.
\end{align*}
Here, one may estimate $\|\Xi^\mm(\bullet)\|$ uniformly as in Eq.~\eqref{Xi m, uniform bd}, and the function $\tau \mapsto e^{-\lambda \tau} \tau^{\frac{n}{2}-1}$ vanishes as both $\tau \to 0^+$ and $\tau \to \infty$, hence integrable on $\R_+$. We thus have 
\begin{align*}
\left\|K_{2,2}\right\| &\leq C\left(K,\lambda^{-1},n,d\right) m^{\frac{n}{2}} \int_0^{\frac{t}{m}} e^{-\lambda \tau} \tau^{\frac{n}{2}-1}\,\dd \tau\leq C\left(K,\lambda^{-1},n,d,t\right) m^{\frac{n}{2}-1}\qquad\text{for even $n \in \mathbb{Z}_{\geq 4}$}.
\end{align*}

In summary, $K_2$ has been estimated by
\begin{align}\label{K2, endgame}
\left\|K_2 +\id\otimes \left[\Xi^\mm(t )\right]^{\otimes \left(\frac{n}{2} - 1 \right)}t^{\frac{n}{2}}\right\| \leq C\left(K,\Lambda,\lambda^{-1},t,n,d\right)\cdot m
\end{align}
 whenever $m \in ]0,1]$ and $n \in \mathbb{Z}_{\geq 4}$ is even.

Finally, for $K_3$, invoking Lemma~\ref{lem: comparing shift in time, Xi(t-m.)} once again, we find that
\begin{align}\label{K3, endgame}
    K_3 =  \left\{\left(\frac{\id}{2} t\right) \otimes \left[\Xi^\mm(t)\right]^{\otimes \left( \frac{n}{2}-1\right)} t^{\frac{n}{2}-1}\right\} + E'. 
\end{align}

Now, putting together Eqs.~\eqref{K1, endgame}, \eqref{K2, endgame}, and \eqref{K3, endgame} (noticing in particular the cancellations between all the terms that are constant multiples of $\id \otimes \left[\Xi^{\mm}\right]^{\frac{n}{2}-1} t^{\frac{n}{2}}$), we arrive at
\begin{align*}
K_1+K_2+K_3 &= t\Xi^\mm (t) \otimes \left[\Xi^{\mm}\right]^{\otimes \left(\frac{n}{2}-1\right)} t^{\frac{n}{2}-1} + E' \equiv \left[\Xi^{\mm}\right]^{\otimes \frac{n}{2}} t^{\frac{n}{2}} +E'. 
\end{align*}
This proves Eq.~\eqref{June25, x}.   
The proof of Theorem~\ref{main theorem for singular limit} is now complete.   \end{proof}

\section{Explicit expression for the small mass limit for diagonalisable $\scrm$}\label{sec: diag}

In this section, we further investigate the structure of the term 
\begin{align*}
\overline{\fraka}_{q}(p) := \idotsint\limits_{0\leq t_1<\dots<t_q < \infty} \bigotimes_{j=1}^q \left(-\scrm e^{-\scrm t_j}p\right) \,\dd t_1\ldots\,\dd t_q\quad (q\in\mathbb{N}),
\end{align*}
which occurs in the zero-mass limit of the expected signature $\Phi_n^\mm(t,p)$ of the momentum path. Indeed, $    \overline{\fraka}_{q}(p) = \lim_{m \to 0^+} \fraka_q^\mm(t,p)$ for any $t>0$, so by Corollary~\ref{cor, explicit limit} we have  
\begin{equation*}
    \lim_{m \to 0^+} \Phi^\mm_n(t,p) = \sum_{k=0}^{\lfloor {n}\slash{2}\rfloor} \left\{\overline{\fraka}_{n-2k}(p) \otimes  \left(\scrm \scrc -\frac{\id}{2} \right)^{\otimes k}  \right\}t^k.
\end{equation*}
When the positive definite matrix $\scrm$ is diagonalisable, $\overline{\fraka}_{q}(p)$ can be explicitly evaluated, with the resulting expression manifesting interesting combinatorial features. See Example~(Diagonalisable $\scrm$) below. Also note that the small mass limit $ \lim_{m \to 0^+} \Phi^\mm_n(t,p)=0$ if and only if $\scrm$ is symmetric.

We start from the following simple observation. For a finite $t>0$, set \begin{align}\label{A, new}
    \mathcal{A}^{c_1, c_2, \dots, c_n}(t) := \idotsint\limits_{0\leq t_1<\dots<t_n\leq t}e^{-c_1 t_1}\,\dd t_1 \dots e^{-c_n t_n}\,\dd t_n,
\end{align}
where $c_1, \ldots, c_n$ are complex numbers of positive real parts. 

\begin{lemma}\label{lem: iterated integral}
$\mathcal{A}^{c_1, c_2, \dots, c_n}$ defined in Eq.~\eqref{A, new} can be evaluated as follows: 
\begin{align*}
    \mathcal{A}^{c_1, c_2, \ldots, c_n}(t) &=A^n_0+A^n_1 e^{-c_nt}+A^n_2  e^{-(c_n+c_{n-1})t} +\ldots+A^n_n e^{-(c_n+\ldots+c_1)t},
\end{align*}
with coefficients $A^n_i$ given by the recursive relation:
\begin{align*}
    \left\{
        \begin{array}{ll}
        A^{n}_i = -\frac{A^{n-1}_{i-1}}{c_{n}+\cdots+c_{n-i+1}} \text{ for each $i \in \{1,\ldots,n\}$},\qquad A^{n}_0 = -\sum_{i = 1}^
        {n}A^{n}_i \prod_{i=1}^n\frac{1}{\sum_{j=i}^n c_j}; 
        \\ A^1_0 = \frac{1}{c_1}, \quad A^1_1 = -\frac{1}{c_1}. 
        \end{array}
    \right.
\end{align*}

\end{lemma}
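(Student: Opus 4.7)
The plan is to proceed by induction on $n$. The base case $n=1$ is immediate: $\int_0^t e^{-c_1 s}\,\dd s = \frac{1}{c_1} - \frac{1}{c_1}e^{-c_1 t}$, giving $A^1_0 = 1/c_1$ and $A^1_1 = -1/c_1$, as claimed. For the inductive step, I would peel off the outermost integration by writing
\begin{align*}
    \mathcal{A}^{c_1,\ldots,c_n}(t) = \int_0^t e^{-c_n s}\, \mathcal{A}^{c_1,\ldots,c_{n-1}}(s)\,\dd s,
\end{align*}
and substitute the inductive formula for $\mathcal{A}^{c_1,\ldots,c_{n-1}}$. Each term of the form $A^{n-1}_i\, e^{-(c_{n-1}+\cdots+c_{n-i})s}$ (with the convention that the $i=0$ term has no exponential) contributes, after integration against $e^{-c_n s}$, a summand proportional to $\frac{1 - e^{-(c_n+c_{n-1}+\cdots+c_{n-i})t}}{c_n+c_{n-1}+\cdots+c_{n-i}}$. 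Collecting coefficients of $e^{-(c_n+\cdots+c_{n-i+1})t}$ (reindexing $j=i+1$) yields exactly $A^n_j = -A^{n-1}_{j-1}/(c_n+\cdots+c_{n-j+1})$, and the constant term is obtained either by summing the positive pieces or, more cleanly, by using that $\mathcal{A}^{c_1,\ldots,c_n}(0)=0$ forces $A^n_0 = -\sum_{i=1}^n A^n_i$.

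For the closed-form identity $A^n_0 = \prod_{i=1}^n \left(\sum_{j=i}^n c_j\right)^{-1}$, my plan is not to unfold the recursion but to use the assumption that each $c_i$ has strictly positive real part, which makes every exponential in the expansion of $\mathcal{A}^{c_1,\ldots,c_n}(t)$ decay as $t \to \infty$. Hence
\begin{align*}
    A^n_0 = \lim_{t\to\infty} \mathcal{A}^{c_1,\ldots,c_n}(t) = \idotsint\limits_{0\leq t_1<\dots<t_n < \infty} e^{-c_1 t_1 -\cdots - c_n t_n}\,\dd t_1\ldots\,\dd t_n,
\end{align*}
and I would apply the triangular change of variables $u_i := t_i - t_{i-1}$ (with $t_0 := 0$), whose Jacobian is unity. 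Under this substitution the exponent telescopes into
\begin{align*}
    c_1 t_1 + \cdots + c_n t_n = \sum_{i=1}^n \left(\sum_{j=i}^n c_j\right) u_i,
\end{align*}
so the integral factors as $\prod_{i=1}^n \int_0^\infty e^{-(c_i+\cdots+c_n)u_i}\,\dd u_i = \prod_{i=1}^n (c_i+\cdots+c_n)^{-1}$, which is precisely the asserted product formula.

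There is no serious obstacle here; the argument is a straightforward induction combined with a standard change of variables. The only place where care is needed is the bookkeeping of indices in the inductive step, in particular ensuring that the reindexing from $i$ to $j=i+1$ matches the convention in the statement (the coefficient $A^n_j$ multiplies the exponential whose exponent has $j$ summands, namely $c_n+c_{n-1}+\cdots+c_{n-j+1}$). As a sanity check, the closed-form expression for $A^n_0$ obtained from the change-of-variables argument is consistent with the recursion: iterating $A^n_0 = -\sum_{i=1}^n A^n_i$ together with $A^n_i = -A^{n-1}_{i-1}/(c_n+\cdots+c_{n-i+1})$ unfolds into the same product, which can be used as a secondary verification if desired.
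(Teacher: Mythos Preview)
Your proposal is correct and follows essentially the same approach as the paper: induction on $n$ by peeling off the outermost integral, with $A^n_0$ determined via $\mathcal{A}^{c_1,\ldots,c_n}(0)=0$, and the closed form for $A^n_0$ obtained by identifying it with $\mathcal{A}^{c_1,\ldots,c_n}(\infty)$. The only cosmetic difference is in evaluating that improper integral: the paper integrates iteratively from $t_n$ down to $t_1$ via Fubini, whereas you use the triangular substitution $u_i = t_i - t_{i-1}$ to factorise it directly; both are standard and equivalent.
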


\begin{proof}[Proof for Lemma~\ref{lem: iterated integral}]
Induct on $n$. For $n = 1$, $A^{c_1}(t) = \int_0^t e^{-c_1 t_1}\, \dd t_1 = -\frac{1}{c_1}e^{-c_1 t}+\frac{1}{c_1}$, so  $A^1_0 = \frac{1}{c_1}=-A^1_1$. Now assume the expression for $n$. Then
\begin{align*}
\mathcal{A}^{c_1, \dots, c_{n+1}}(t) &=\int_{0\leq t_1<\cdots<t_{n+1}\leq t}e^{-c_1 t_1}\,\dd t_1 \cdots e^{-c_n t_n}\,\dd t_n\; e^{-c_{n+1} t_{n+1}} \,\dd t_{n+1}\\
&=\int_0^t \mathcal{A}^{c_1, \dots, c_{n}}\left(\tilde{t}_1\right)\cdot e^{-c_{n+1}\tilde{t}_1}\,\dd \tilde{t}_1\\
    &= \int_0^t\left\{ \sum_{i = 1}^{n} A^n_i e^{-(c_n+\cdots+c_{n-i+1}) \tilde{t}_1} + A^n_0\right\}\cdot e^{-c_{n+1}\tilde{t}_1}\,\dd \tilde{t}_1\\
    &=\sum_{i = 0}^{n}\int_0^t A^n_{i} e^{-(c_{n+1}+\cdots+c_{n-i+1}) \tilde{t}_1}\,\dd \tilde{t}_1=\sum_{i=0}^{n}\frac{A^n_{i}\left[1-e^{-(c_{n+1}+\cdots+c_{n-i+1}) t}\right]}{c_{n+1}+\cdots+c_{n-i+1}},
\end{align*}
where in the third line we used the inductive hypothesis. This implies that 
\begin{align*}
    A^{n+1}_i = -\frac{A^{n}_{i-1}}{c_{n+1}+\cdots+c_{n-i+2}} \qquad \text{for } i \in \{1, \ldots, n+1\}.
\end{align*}
By taking $t=0$ we deduce $ \mathcal{A}^{c_1, \dots, c_{n+1}}(0)=\sum_{i=0}^{n+1}A^{n+1}_i = 0$. This completes the induction.

To determine $A^{n}_0$, note that it equals to $\mathcal{A}^{c_1, c_2, \dots, c_n}(\infty)$ by Fubini's Theorem: 
\begin{align*}
    \mathcal{A}^{c_1, c_2, \dots, c_n}(\infty) &=\idotsint\limits_{0\leq t_1<\dots<t_n<\infty}e^{-c_n t_n}\,\dd t_n\dots e^{-c_1 t_1}\,\dd t_1 =\idotsint\limits_{0\leq t_1<\dots<t_{n-1}<\infty}\frac{1}{c_n}e^{-(c_{n-1}+c_n) t_{n-1}}\,\dd t_{n-1}\dots e^{-c_1 t_1}\,\dd t_1\\
    &=\idotsint\limits_{0\leq t_1<\dots<t_{n-2}<\infty}\frac{e^{-(c_{n}+c_{n-1}+c_{n-2}) t_{n-2}}\,\dd t_{n-2}\dots e^{-c_1 t_1}\,\dd t_1}{c_n(c_n+c_{n-1})}=\prod_{i=1}^n\frac{1}{c_i+\cdots+c_n}.
\end{align*}
Indeed, all the other modes $\mathcal{A}^{c_1, c_2, \dots, c_n}(t)$ decay exponentially as $t \to \infty$. \end{proof}

Lemma~\ref{lem: iterated integral} above enables us to find explicit expressions for 
 $\overline{\fraka}_n(p)$ when $\scrm$ is diagonalisable.

\begin{example}[Isotropic diagonal $\scrm$]\label{example, identity}
If $\scrm = \lambda\cdot \id$ for  $\lambda \in \{\mathfrak{Re}> 0\}$, then $\overline{\fraka}_n(p) = \frac{1}{n!}(-p)^{\otimes n}$ for any $p \in \R^d$, $n \in \mathbb{N}$.
\end{example}
\begin{proof}
The integrand in $\overline{\fraka}_n(p)$ is symmetric in each $t_i$. Thus, by Lemma~\ref{lem: sym},
\begin{align*}
\overline{\fraka}_{n}(p) 
&= \idotsint\limits_{0\leq t_1<\dots<t_n < \infty} \bigotimes_{j=1}^n \left(-\lambda\cdot\id\; e^{-t_j\lambda\cdot\id}p\right) \,\dd t_1\ldots\,\dd t_n =\frac{1}{n!}\bigotimes_{j=1}^n\left(\int_0^{\infty}-\lambda e^{-\lambda t_j}p \,\dd t_j\right)=\frac{1}{n!}(-p)^{\otimes n}.
\end{align*} 
\end{proof}

When $\scrm$ is diagonal with distinctive eigenvalues, the trick of symmetrisation is no longer applicable, but we can use instead the formula for $A^n_0$ in Lemma~\ref{lem: iterated integral} to evaluate $\overline{\fraka}_{n}(p)$.

\begin{example}[General diagonal $\scrm$]\label{example: M diagonal}
Suppose $\scrm = \scrd = {\rm Diag}\,(\lambda_1, \cdots,\lambda_d)$ with $\mathfrak{Re}(\lambda_i)>0$ for each $i$. Then for any $p \in \R^d$, $n \in \mathbb{N}$, and $I = \left(i_1, \ldots, i_n\right) \in \{1,\ldots,d\}^n$, we have $
    \left[\overline{\fraka}_{n}(p)\right]^I = \prod_{j=1}^n\frac{\lambda_{i_j}p_{i_j}}{\lambda_{i_j}+\cdots+\lambda_{i_n}}.$
\end{example}

\begin{proof}  Since both $\scrd$ and $e^{-\scrd t_j}$ are diagonal $d \times d$ matrices, we have 
\begin{align*}
    \left[\overline{\fraka}_{n}(p)\right]^I
    &= \left[\;\;\idotsint\limits_{0\leq t_1<\dots<t_n < \infty} \bigotimes_{j=1}^n \left(-\scrd e^{-\scrd t_j}p\right) \,\dd t_1\ldots\,\dd t_n\right]^I\\
    &=\idotsint\limits_{0\leq t_1<\dots<t_n < \infty}\left[\prod_{j=1}^n \left(-\scrd e^{-\scrd t_j}p \right)^{i_j} \right]\dd t_1\ldots\dd t_n =\idotsint\limits_{0\leq t_1<\dots<t_n < \infty}\prod_{j=1}^n \left(-\lambda_{i_j} e^{-\lambda_{i_j} t_j} p^{i_j} \right)\,\dd t_1\ldots\,\dd t_n\\
    &=\prod_{j=1}^n\left(-\lambda_{i_j}p^{i_j}\right)\cdot \idotsint\limits_{0\leq t_1<\dots<t_n < \infty}\,\,\prod_{j=1}^n \left(e^{-\lambda_{i_j} t_j} \right)\dd t_1\ldots\dd t_n=\prod_{j=1}^n\left(\frac{-\lambda_{i_j}p^{i_j}}{\lambda_{i_j}+\cdots+\lambda_{i_n}}\right).
\end{align*}
The last line holds by the expression  for $A^n_0$ in Lemma~\ref{lem: iterated integral} with $c_i = \lambda_i$.   \end{proof}

Finally, when $\scrm$ is diagonalisable, we compute $\overline{\fraka}_{n}(p)$ via the mixed product property of tensor products and the argument for  Example~(General diagonal $\scrm$). This completes the proof of  Theorem~\ref{thm: main, Dec24}.

\begin{example}[Diagonalisable $\scrm$]\label{example: main}
Suppose $\scrm \in \mathfrak{gl}(d;\C)$ is diagonalisable with $\scrm = Q\scrd Q^{-1}$ for $\scrd= {\rm Diag}\,(\lambda_1, \cdots,\lambda_d)$;  $\mathfrak{Re}(\lambda_j)>0$ for each $j$. Then for any $p \in \R^d$, $n \in \mathbb{N}$, and multi-index $I = \left(i_1, \ldots, i_n\right) \in \{1,\ldots,d\}^n$, it holds that 
\begin{align}\label{diagonalisable, new}
   \left[ \overline{\fraka}_{n}(p)\right]^I =  \sum\Bigg\{Q^{i_1}_{j_1} \cdots Q^{i_n}_{j_n} \mathcal{Z}_{j_1,\ldots,j_n}\left(Q^{-1}\right)^{j_1}_{r_1}\cdots\left(Q^{-1}\right)^{j_n}_{r_n} p^{r_1}\cdots p^{r_n}\Bigg\}, 
\end{align}
where $\mathcal{Z}_{j_1, \ldots, j_n}:=\prod_{\ell=1}^n \left( \frac{-\lambda_{j_\ell}}{\lambda_{j_{\ell}} + \cdots + \lambda_{j_n}}\right)$ and the summation is over $j_1, \ldots, j_n; r_1, \ldots, r_n \in \{1,\ldots, d\}$.
\end{example}

This above result reduces to Example~(General diagonal $\scrm$) when $Q= \id$.

\begin{proof}
By the mixed product property in \S\ref{Tensor product of linear mappings} and the simultaneous diagonalisability of $\scrm$ and $e^{-\scrm t_j}$, we have that 
\begin{align*}
\overline{\mathfrak{A}}_n(p) = Q^{\otimes n}  \Lambda  Q^{\otimes (-n)} p^{\otimes n},\qquad\Lambda:= \idotsint\limits_{0\leq t_1<\dots<t_n < \infty} \bigotimes_{j=1}^n \left(-\scrd e^{-\scrd t_j}\right)\,\dd t_1\ldots\dd t_n.
\end{align*} 
Here $Q^{\otimes n}$, $\Lambda$, and $Q^{\otimes (-n)} := \left(Q^{-1}\right)^{\otimes n} \in \mathfrak{gl}(d;\C)^{\otimes n}$. See \S\ref{Tensor product of linear mappings} for notation. 

It remains to specify $\Lambda = \left\{\Lambda^I_J\right\}$ for arbitrary multi-indices $I=(i_1, \ldots, i_n)$ and $J = (j_1, \ldots, j_n) \in \{1,\ldots,d\}^n$. We have $-\scrd e^{-\scrd t_j} = {\rm Diag}\,\left(-\lambda_1 e^{-\lambda_1 t_j}, \ldots, -\lambda_n e^{-\lambda_nt_j}\right)$ for $\scrd = {\rm Diag}\,(\lambda_1, \ldots, \lambda_n)$. Thus,\\
    $\Lambda^I_J = \prod_{\ell=1}^n\,\, \left\{\,\, \idotsint\limits_{0\leq t_1<\dots<t_n < \infty}  \left(-\lambda_{i_\ell} e^{-\lambda_{i_\ell} t_\ell} \delta_{i_\ell, j_\ell}\right)\,\dd t_1\ldots\dd t_n\right\},$
where $\delta_{i_\ell, j_\ell}$ is the Kronecker delta. Computations in Example~(General diagonal $\scrm$) (with $p_{i_j}=1$) yield that
\begin{align*}
    \Lambda^I_J = \prod_{\ell=1}^n \left( \frac{-\lambda_{i_\ell}}{\lambda_{i_{\ell}} + \cdots + \lambda_{i_n}}\right)\,\delta_{i_\ell, j_\ell}.
\end{align*}

As a consequence, for each multi-index $K \in \{1,\ldots,d\}^n$ we have
\begin{align*}
\left[\overline{\fraka_n}(p)\right]^K &= \sum_{I,J,R\in \{1,\ldots,d\}^n} \left(Q^{\otimes n}\right)^K_I \Lambda^I_J \left(Q^{\otimes -n}\right)^J_R \left(p^{\otimes n}\right)^R\\
&= \sum_{i_1, \ldots, i_n; j_1, \ldots, j_n; r_1, \ldots, r_n \in \{1,\ldots, d\}} \Bigg\{Q^{k_1}_{i_1} \cdots Q^{k_n}_{i_n} \cdot \mathcal{Z}_{i_1, \ldots, i_n} \cdot \delta^{i_1}_{j_1}\cdots\delta^{i_n}_{j_n}\cdot\left(Q^{-1}\right)^{j_1}_{r_1}\cdots\left(Q^{-1}\right)^{j_n}_{r_n}\cdot p^{r_1}\cdots p^{r_n}\Bigg\}\\
&= \sum_{i_1, \ldots, i_n; r_1, \ldots, r_n \in \{1,\ldots, d\}} \Bigg\{Q^{k_1}_{i_1} \cdots Q^{k_n}_{i_n} \mathcal{Z}_{i_1,\ldots,i_n}\left(Q^{-1}\right)^{i_1}_{r_1}\cdots\left(Q^{-1}\right)^{i_n}_{r_n}\cdot p^{r_1}\cdots p^{r_n}\Bigg\}, 
\end{align*}
where $\mathcal{Z}_{i_1, \ldots, i_n}:=\prod_{\ell=1}^n \left( \frac{-\lambda_{i_\ell}}{\lambda_{i_{\ell}} + \cdots + \lambda_{i_n}}\right)$. This completes the proof.  \end{proof}

\begin{remark}
Example~(Diagonalisable $\scrm$), Eq.~\eqref{diagonalisable, new} cannot be simplified in general, as $\mathcal{Z}_{j_1, \ldots, j_n}$ is nonlinear in each coordinate. 
To illustrate the non-(multi-)linearity of the expression~\eqref{diagonalisable, new} for $\overline{\fraka}_n(p)$, we remark that even for $d=n=2$, one cannot find $2\times 2$ matrices $M_1$ and $M_2$ such that $\Lambda = M_1 \otimes M_2$, unless $\scrd = {\rm Diag}\,(\lambda, \lambda)$; \emph{i.e.}, when it is reduced to Example~(Isotropic diagonal $\scrm$).

Indeed, for $\scrd = {\rm Diag}\,(\lambda_1, \lambda_2)$ we compute that
\begin{align*}
    \Lambda &= {\rm Diag}\, \Bigg(\iint_{\blacktriangle}(\lambda_1)^2 e^{-\lambda_1(t_1+t_2)}\,\dd t_1\,\dd t_2,\, \iint_{\blacktriangle}\lambda_1 \lambda_2 e^{-\lambda_1 t_1- \lambda_2 t_2}\,\dd t_1\,\dd t_2, \\
    &\qquad \iint_{\blacktriangle}\lambda_1 \lambda_2 e^{-\lambda_2 t_1- \lambda_1 t_2}\,\dd t_1\,\dd t_2, \, \iint_{\blacktriangle}(\lambda_2)^2 e^{-\lambda_2(t_1+t_2)}\,\dd t_1\,\dd t_2\Bigg),
\end{align*}
where $\blacktriangle := \{(t_1,t_2):\,0\leq t_1<t_2<\infty\}$ and the tensor product is identified with the Kronecker product of matrices, \emph{i.e.}, via $\mathfrak{gl}(2;\C) \otimes \mathfrak{gl}(2;\C) \cong \mathfrak{gl}(4;\C)$. Using the trick of symmetrisation in Lemma~\ref{lem: sym} and the computations in Example~(General diagonal $\scrm$), we obtain $\Lambda = {\rm Diag}\,\left(\frac{1}{2},\,\frac{\lambda_1}{\lambda_1+\lambda_2},\,\frac{1}{2},\,\frac{\lambda_2}{\lambda_1+\lambda_2}  \right)$.

Suppose that there are $2\times 2$ matrices $M_1$ and $M_2$ with $\Lambda = M_1 \otimes M_2$. Then $M_1$ and $M_2$ are diagonal; say, $M_1 = {\rm Diag}\,(M_{1,1}, M_{1,2})$ and $M_2 = {\rm Diag}\,(M_{2,1}, M_{2,2})$. But in this case $$M_1 \otimes M_2 = {\rm Diag}\,\left(M_{1,1}M_{2,1}, M_{1,1}M_{2,2},M_{1,2}M_{2,1},M_{1,2}M_{2,2}\right),$$ which cannot be equal to $\Lambda$ unless $\lambda_1 = \lambda_2$.   
\end{remark}

\begin{appendix}
\section*{Proof of Technical lemmas}
\label{sec: appendix, lemma}

The proofs of several technical lemmas in the main text are presented in Appendix~\ref{sec: appendix, lemma}.




\begin{proof}[Proof of Lemma~\ref{lem: matrix expoential limit}]
Let $J$ the Jordan canonical form of $\scrm$. Then $\scrm = PJP^{-1}$ and hence $e^{-\scrm} = P e^{-J} P^{-1}$. It suffices to prove for the case when $J$ has only one Jordan block and $\delta=1$. Consider the Jordan decomposition $J = \lambda\, \id + N$ for eigenvalue $\lambda$ and nilpotent $N$. Compute via the binomial theorem for commuting matrices:
\begin{align*}
    e^{-J} &=\sum_{k=0}^{\infty}\frac{1}{k!}(-\lambda \,\id -N)^k=\sum_{k=0}^{\infty}\frac{1}{k!}\sum_{j=0}^{k} {k \choose j} (-\lambda)^{k-j}(-N)^j= e^{-\lambda} \left(\id-N+\frac{N^2}{2!}- \cdots +(-1)^{d-1}\frac{N^{d-1}}{(d-1)!}\right).
\end{align*}
Thus, $\left\|e^{-\scrm}\right\| \leq \left\|e^{N}\right\|\left\|P\right\|\left\|P^{-1}\right\| e^{-\lambda}$. A scaling argument yields $\left\|e^{-\scrm\delta}\right\| \leq K e^{-\lambda \delta}$ for any $\delta>0$.  \end{proof}

\begin{proof}[Proof of Lemma~\ref{lem: average difference}]
Direct calculation gives us $
  \dashint_0^t f - \dashint_0^{t-\delta} f = 
\frac{1}{t}\int_{t-\delta}^t f -  \frac{\delta}{t(t-\delta)}\int_0^{t-\delta}f$. The absolute value of each term on the right-hand side is $\leq C_0\delta t^{-1}$.  \end{proof}

\begin{proof}[Proof of Lemma~\ref{lem: sym}]
For each permutation of $n$ letters $\sigma$, write
$\blacktriangle^\sigma := \left\{ \left(t_1, \ldots, t_n\right) \in \R^n:\right.$ $\left. 0 \leq t_{\sigma(1)}< \ldots < t_{\sigma(n)} \leq L\right\}.$ The symmetry of $F$ implies $\int_{\blacktriangle^\sigma}F = \int_{\blacktriangle}F$. Hence, $    \int_{\blacktriangle}F = \frac{1}{\# \Sigma_n}$ $ \int_{\bigcup_{\sigma \in \Sigma_n} \blacktriangle^\sigma} F = \frac{1}{n!} \int_{[0,L]^n} F = \frac{1}{n!} \left[\int_0^L f(t)\,\dd t\right]^{\otimes n}$.
 \end{proof}

\begin{proof}[Proof of Lemma~\ref{lem: pure expectation is O(m)}] In view of Lemma~\ref{lem:GaussianMoment}, we have
\begin{align*}
\left\|\E^p \left[P_s^{\otimes \ell}\right]\right\| &\leq \int_0^t \Bigg\|\left(e^{-\mom s}p\right)^{\otimes \ell} + \sum_{j=1}^{\lfloor \ell\slash 2\rfloor} C_{j,\ell}\, \sym\Bigg[\left(e^{-\mom s}p\right)^{\otimes( \ell-2j)} \otimes\left(\int_0^s e^{-\mom \varsigma} e^{-\frac{\scrm^*}{m}\varsigma}\,\dd \varsigma\right)^{\otimes j} 
\Bigg]\Bigg\|\,\dd s\\
&\leq m\int_0^{\frac{t}{m}} \Bigg\{ \left\|e^{-\scrm \sigma }p\right\|^\ell +  \sum_{j=1}^{\lfloor \ell\slash 2\rfloor} C_{j,\ell}\, m^j\, \left\|e^{-\scrm \sigma }p\right\|^{\ell - 2j} \left\|\ssig_{\sigma}\right\|^j
\Bigg\}\,\dd \sigma.
\end{align*}
The second line follows from the change of variables $\sigma:=s/m$. Clearly, $\int_0^\infty \left\|e^{-\scrm \sigma}\right\|\,\dd \sigma \leq K\lambda^{-1}$, so the integral is of order $\mathcal{O}(1)$.  \end{proof}

\begin{proof}[Proof of Lemma~\ref{lem for II 7}]
Thanks to the definition of $\fraka_\ell^\mm$ and the fact that the term  $\bigotimes_{j=1}^\ell  \left( -\scrm e^{-\scrm t_j} \right)$ is deterministic (\emph{i.e.}, it is constant with respect to $\E^p$), we deduce that 
\begin{align*}
&\E^p\Bigg[\int_0^t \left\|\fraka_\ell^\mm(t-s,P_s)\right\| \,\dd s \Bigg] \leq \int_0^td^{\ell}\left\| \,\,\,\,\idotsint\limits_{0\leq t_1<\dots<t_\ell\leq \frac{t-s}{m}} \,\,\bigotimes_{j=1}^\ell  \left( -\scrm e^{-\scrm t_j} \right)\, \dd t_1\ldots\,\dd t_\ell
\right\| \cdot \left\|\E^p\left[P_s^{\otimes \ell}\right]\right\| \,\dd s.
\end{align*}
The tensor norm of the iterated integral in the parenthesis is estimated by 
\begin{align*}
\left\|\,\,\,\idotsint\limits_{0\leq t_1<\dots<t_\ell\leq \frac{t-s}{m}} \,\,\bigotimes_{j=1}^\ell  \left( -\scrm e^{-\scrm t_j} \right)\, \dd t_1\ldots\,\dd t_\ell
\right\| \leq \Lambda^\ell K^{\ell}\,\,\idotsint\limits_{0\leq t_1<\dots<t_\ell\leq \frac{t-s}{m}} \,\, e^{-\lambda(t_1+\ldots+ t_\ell)}\,  \dd t_1\ldots\,\dd t_\ell \leq \left(\frac{\Lambda K}{\lambda}\right)^\ell \frac{1}{\ell!},
\end{align*}
thanks to the symmetrisation trick in Lemma~\ref{lem: sym}. In particular, this bound is uniform in $s$ and $m$. Thus
\begin{align*}    \E^p\Bigg[\int_0^t \left\|\fraka_\ell^\mm(t-s, P_s)\right\| \,\dd s \Bigg] \leq \left(\frac{\Lambda K d}{\lambda}\right)^\ell \frac{1}{\ell!} \int_0^t \left\|\E^p\left[P_s^{\otimes \ell}\right]\right\| \,\dd s.
\end{align*}
We now conclude the proof from  Lemma~\ref{lem: pure expectation is O(m)}.  
\end{proof}

\begin{proof}[Proof of Lemma~\ref{lem: handling B in II 2,5,8}]
By Eq.~\eqref{Xi estimate}
 and the elementary inequality $|a+b|^r \leq 2^{r-1}\left[|a|^r+|b|^r\right]$ for $r \geq 0$, we have
\begin{align*}
\left\|\left[\Xi^\mm(t-s)\right]^{\otimes r} (t-s)^r \right\| &\leq 2^{r-1} t^r \left\{\left\|\Xi^\mm(t) \right\|^r +  \left(\frac{2\Lambda K^2 d^2}{\lambda}\right)^r\right\}. \end{align*}
This together with Eq.~\eqref{Xi m, uniform bd} for $\left\|\Xi^\mm(t) \right\|$ gives us
\begin{align*}
\left\|\E^p\left[ \int_0^t \fraka^\mm_\ell(t-s, P_s) \otimes \left(\Xi^\mm(t-s)\right)^{\otimes r} (t-s)^r \,\dd s\right]\right\| \leq C\left(K,\Lambda,\lambda^{-1},r,d,t\right) \E^p\left[\int_0^t \left\|\fraka^\mm_\ell(t-s, P_s)\right\| \,\dd s \right].
\end{align*}
The assertion now follows immediately from Lemma~\ref{lem for II 7}.  \end{proof}

\begin{proof}[Proof of Lemma~\ref{lem: comparing shift in time, Xi(t-m.)}]
We bound by triangle inequality
\begin{align*}
    &\left\|\left[\Xi^\mm(t-m\tau_0 )\right]^{\otimes j} (t-m\tau_0 )^j - \left[\Xi^\mm(t )\right]^{\otimes j} t^j\right\|\\
    &\quad \leq \left\|\left\{\left[\Xi^\mm(t-m\tau_0 )\right]^{\otimes j} - \left[\Xi^\mm(t )\right]^{\otimes j}  \right\} \left(t-m\tau_0\right)^j\right\|+ \left\|\left[\Xi^\mm(t )\right]^{\otimes j} \left\{t^j-(t-m\tau_0 )^j\right\}\right\| =: \Xi_1 + \Xi_2.
\end{align*}

For $\Xi_1$,  arbitrary tensors $A$ and $B$ of the same type satisfy
\begin{align*}
A^{\otimes j} - B^{\otimes j} &= \sum_{\{q_1,q_2\in\mathbb{N}:\,q_1+q_2=j-1\}} B^{\otimes q_1} \otimes (A-B) \otimes A^{\otimes q_2}.
\end{align*}
Specialising to $A= \Xi^\mm(t-m\tau_0)$, $B=\Xi^\mm(t)$, we have $\|A\|$ and $\|B\| \leq \frac{1}{2}\left(1+ \frac{\Lambda K^2d^2}{\lambda}\right)\leq \frac{\Lambda K^2d^2}{\lambda}$. See Eq.~\eqref{Xi m, uniform bd}. On the other hand, by definition for $\Xi^\mm$, the bound~\eqref{Sigma estimate, simple} for $\ssig$, as well as Lemma~\ref{lem: average difference}, we have
 \begin{align*}
 \|A-B\| \leq d\|\scrm\| \left\| \left\{ \dashint_0^{\frac{t-m\tau_0}{m}} - \dashint_0^{\frac{t}{m}}   \right\}\ssig_\varsigma\,\dd\varsigma \right\|\leq \frac{2\Lambda K^2d^2 \tau_0}{\lambda t}\cdot m \leq \frac{2\Lambda K^2d^2}{\lambda}\cdot m. 
 \end{align*}
It thus follows that
 $   \Xi_1 \leq t^j \left(\frac{\Lambda K^2d^2}{\lambda}\right)^{j-1}\frac{\Lambda K^2d^2}{\lambda}\cdot m = C\left(K,\Lambda, \lambda^{-1},j,d,t\right)\cdot m.$

For $\Xi_2$, as above $\left\|\left[\Xi^\mm(t)\right]^{\otimes j}\right\| \leq \left(\frac{\Lambda K^2d^2}{\lambda}\right)^{j}$, and $
    t^j- (t-m\tau_0)^j = \sum_{q=1}^{j} {j \choose q} (-m\tau_0)^q t^{j-q} \leq C(j,t) \cdot m$ for $m>0$ small, thanks to the binomial theorem.  \end{proof}

\end{appendix}
\subsection*{Acknowledgments}
HN thanks Terry Lyons for useful discussions. The authors also thank the anonymous referees for their constructive comments
that improved the quality of this paper.

\subsection*{Funding}
SL and HN are both supported by the SJTU-UCL joint seed fund WH610160507/067.  SL is also supported by NSFC Projects 12201399, 12331008, and 12411530065, Young Elite Scientists Sponsorship Program by CAST 2023QNRC001, the National Key Research $\&$ Development Programs 2023YFA1010900 and 2024YFA1014900, Shanghai Rising-Star Program 24QA2703600, and the Shanghai Frontier Research Institute for Modern Analysis. HN is also supported by the EPSRC under the program grant EP/S026347/1 and the Alan Turing Institute under the EPSRC grant EP/N510129/1.

\end{document}